\newtheorem{theorem}{Theorem}[section]
\newtheorem{lemma}[theorem]{Lemma}
\newtheorem{proposition}[theorem]{Proposition}
\newtheorem{corollary}[theorem]{Corollary}
\newtheorem{exAux}[theorem]{Example}
\newtheorem{Def}[theorem]{Definition}
\newenvironment{definition}{\begin{Def} \rm}{\end{Def}}
\newtheorem{Note}[theorem]{Note}
\newenvironment{note}{\begin{Note} \rm}{\end{Note}}
\newtheorem{Problem}[theorem]{Problem}
\newenvironment{problem}{\begin{Problem} \rm}{\end{Problem}}
\newtheorem{Rem}[theorem]{Remark}
\newtheorem{Not}[theorem]{Notation}
\newtheorem{Conj}[theorem]{Conjecture}
\newtheorem{Ass}[theorem]{Assumption}
\newenvironment{proof}{\medskip\noindent{\bf Proof.\ }}{\qed\medskip}
\newenvironment{proofof}[1]{\medskip\noindent{\bf Proof  of {#1}.\ 
}}{\qed\medskip}
\newcommand{\qed}{\hfill\mbox{$\Box$\qquad\qquad}}
\newcommand{\F}{\mathbb{F}}
\newcommand{\Mat}{\text{\rm Mat}}
\newcommand{\vphi}{\varphi}
\newif\ifDRAFT
\begin{document}

\title{Linear transformations that are tridiagonal with respect to
the three decompositions for an LR triple}

\author{Kazumasa Nomura}
\date{}
\maketitle

\bigskip

{
\small
\begin{quote}
\begin{center}
{\bf Abstract}
\end{center}
Recently, Paul Terwilliger introduced the notion of
a lowering-raising (or LR) triple,
and classified the LR triples.
An LR triple is defined as follows.
Fix an integer $d \geq 0$, a field $\F$,
and a vector space  $V$ over $\F$ with dimension $d+1$.
By a decomposition of $V$ we mean a sequence $\{V_i\}_{i=0}^d$ 
of $1$-dimensional subspaces of $V$ whose sum is $V$.
For a linear transformation $A$ from $V$ to $V$, 
we say $A$ lowers $\{V_i\}_{i=0}^d$ whenever
$A V_i = V_{i-1}$ for $0 \leq i \leq d$, where $V_{-1}=0$.
We say $A$ raises $\{V_i\}_{i=0}^d$ whenever
$A V_i = V_{i+1}$ for $0 \leq i \leq d$, where $V_{d+1}=0$.
An ordered pair of linear transformations $A,B$ from $V$ to $V$ is called LR
whenever there exists a decomposition $\{V_i\}_{i=0}^d$ of $V$ that is lowered by $A$ and raised by $B$.
In this case the decomposition $\{V_i\}_{i=0}^d$ is uniquely determined by $A,B$; 
we call it the $(A,B)$-decomposition of $V$.
Consider a $3$-tuple of linear transformations $A$, $B$, $C$ from $V$ to $V$ such that
any two of $A$, $B$, $C$ form an LR pair on $V$. Such a $3$-tuple is called
an LR triple on $V$.
Let $\alpha$, $\beta$, $\gamma$ be nonzero scalars in $\F$.
The triple $\alpha A, \beta B, \gamma C$ is an LR triple on $V$,
said to be associated to $A,B,C$.
Let $\{V_i\}_{i=0}^d$ be a decomposition of $V$ and let $X$ be a linear transformation
from $V$ to $V$.
We say $X$ is tridiagonal with respect to $\{V_i\}_{i=0}^d$
whenever $X V_i \subseteq V_{i-1} + V_i + V_{i+1}$ for $0 \leq i \leq d$.
Let $\cal X$ be the vector space over $\F$ consisting of 
the linear  transformations from $V$ to $V$ that are tridiagonal
with respect to the $(A,B)$ and $(B,C)$ and $(C,A)$ decompositions of $V$.
There is a special class of LR triples, called $q$-Weyl type.
In the present paper, we find a basis of $\cal X$ for each LR triple
that is not associated to an LR triple of $q$-Weyl type.
\end{quote}
}

\section{Introduction}
 
The equitable presentation for the quantum algebra $U_q(\mathfrak{sl}_2)$
was introduced in \cite{ITW} and further investigated in \cite{T:uqsl2, T:Billiard}.
For the lie algebra $\mathfrak{sl}_2$,
the equitable presentation was introduced in \cite{HarT} and comprehensively studied
in \cite{BenT}.
From the equitable point of view, consider a finite-dimensional irreducible module for $U_q(\mathfrak{sl}_2)$
or $\mathfrak{sl}_2$.
In \cite{BenT,T:uqsl2} three nilpotent linear transformations of the module are encountered,
with each transformation acting as a lowering map and raising map in multiple ways.
In order to describe this situation more precisely,
Paul Terwilliger  introduced the notion of a lowering-raising (or LR) triple
of linear transformations, and gave their complete classification
(see \cite{T:LRT}).

There are three decompositions associated with an LR triple.
In the present paper, we investigate the linear transformations 
that act in a tridiagonal manner on each of these three decompositions. 
In this section, we first recall the notion of an LR triple, and 
then state our main results.

Throughout the paper, fix an integer $d \geq 0$, a field $\F$,
and a vector space $V$ over $\F$ with dimension $d+1$.
Let $\text{End}(V)$ denote the $\F$-algebra consisting of the
$\F$-linear transformations from $V$ to $V$,
and let $\Mat_{d+1}(\F)$ denote the $\F$-algebra consisting of the
$(d+1) \times (d+1)$ matrices that have all entries in $\F$.
We index the rows and columns by $0,1,\ldots,d$.

By a {\em decomposition} of $V$ we mean a sequence $\{V_i\}_{i=0}^d$
of $1$-dimensional subspaces of $V$ such that $V = \sum_{i=0}^d V_i$
(direct sum).
Let $\{V_i\}_{i=0}^d$ be a decomposition of $V$.
For notational convenience define $V_{-1}=0$ and $V_{d+1}=0$.
For $A \in \text{End}(V)$, we say
{\em $A$ lowers $\{V_i\}_{i=0}^d$} whenever $A V_i = V_{i-1}$ 
for $0 \leq i \leq d$.
We say {\em $A$ raises $\{V_i\}_{i=0}^d$} whenever
$A V_i = V_{i+1}$ for $0 \leq i \leq d$.
An ordered pair $A,B$ of elements in $\text{End}(V)$ is called {\em LR} 
whenever these exists a decomposition of $V$ that is lowered by $A$
and raised by $B$.
In this case the decomposition $\{V_i\}_{i=0}^d$ is uniquely determined by $A,B$
(see \cite[Section 3]{T:LRT});
we call it the {\em $(A,B)$-decomposition} of $V$.
For $0 \leq i \leq d$ define $E_i \in \text{End}(V)$ such that
$(E_i - I) V_i = 0$ and $E_i V_j = 0$ for $0 \leq j \leq d$, $j \neq i$,
where $I$ denotes the identity in $\text{End}(V)$.
We have $E_i E_j = \delta_{i,j} E_i$ for $0 \leq i,j  \leq d$
and $I = \sum_{i=0}^d E_i$.
We call $\{E_i\}_{i=0}^d$ the {\em idempotent sequence} for $A,B$
(or $\{V_i\}_{i=0}^d$).

A $3$-tuple $A,B,C$ of elements in $\text{End}(V)$ is called an
{\em LR triple}
whenever any two of $A$, $B$, $C$ form an LR pair on $V$.
We say $A,B,C$ is {\em over} $\F$. We call $d$ the {\em diameter} of $A,B,C$.

Let $A,B,C$ be an LR triple on $V$ and let $A',B',C'$ be an LR triple
on a vector space $V'$ over $\F$ with dimension $d+1$.
By an {\em isomorphism of LR triples} from $A,B,C$ to $A',B',C'$ we mean
an $\F$-linear bijection $\sigma : V \to V'$ such that
$\sigma A = A' \sigma$, $\sigma B = B' \sigma$, $\sigma C = C' \sigma$.
The LR triples $A,B,C$ and $A',B',C'$ are said to be {\em isomorphic}
whenever there exists an isomorphism of LR triples from $A,B,C$
to $A',B',C'$.

Let $A,B,C$ be an LR triple on $V$.
Let $\{V_i\}_{i=0}^d$ (resp.\ $\{V'_i\}_{i=0}^d$) (resp.\ $\{V''_i\}_{i=0}^d$) be the
$(A,B)$-decomposition (resp.\ $(B,C)$-decomposition) (resp.\ $(B,C)$-decomposition)
of $V$,
and let $\{E_i\}_{i=0}^d$ (resp.\ $\{E'_i\}_{i=0}^d$) (resp.\ $\{E''_i\}_{i=0}^d$)
be the corresponding idempotent sequence.
We call the sequence
\begin{equation}
    (\{E_i\}_{i=0}^d, \{E'_i\}_{i=0}^d, \{E''_i\}_{i=0}^d)         \label{eq:idempotentdata}
\end{equation}
the {\em idempotent data} of $A,B,C$.
Define scalars
\begin{align*}
  a_i &= \text{tr} (C E_i), &
  a'_i &= \text{tr} (A E'_i), &
  a''_i &= \text{tr} (B E''_i) &
  && (0 \leq i \leq d),
\end{align*}
where tr means trace.
We call the sequence
\begin{equation}
   (\{a_i\}_{i=0}^d, \{a'_i\}_{i=0}^d, \{a''_i\}_{i=0}^d)          \label{eq:tracedata}
\end{equation}
the {\em trace data} of $A,B,C$.
The LR triple is said to be {\em bipartite} whenever each of $a_i$, $a'_i$, $a''_i$ is zero
for $0 \leq i \leq d$. 
In this case, $d$ is even (see \cite[Lemma 16.6]{T:LRT}); set $d=2m$.
The elements
$\sum_{j=0}^{m} E_{2j}$,
$\sum_{j=0}^{m} E'_{2j}$,
$ \sum_{j=0}^{m} E''_{2j}$
are equal (see \cite[Lemma 16.12]{T:LRT}).
We denote this element by $J$: 
\begin{equation}
 J = \sum_{j=0}^{m} E_{2j} 
   = \sum_{j=0}^{m} E'_{2j} = \sum_{j=0}^{m} E''_{2j}.    \label{eq:defJ}
\end{equation}
Observe
\begin{equation}
I-J =  \sum_{j=0}^{m-1} E_{2j+1}
  =  \sum_{j=0}^{m-1} E'_{2j+1} =  \sum_{j=0}^{m-1} E''_{2j+1}.  \label{eq:I-J}
\end{equation}

Let $A,B,C$ be an LR triple on $V$,
and $\alpha$, $\beta$, $\gamma$ be nonzero scalars in $\F$.
Then $\alpha A, \beta B, \gamma C$ is an LR triple on $V$,
and this LR triple has the same idempotent data as $A,B,C$
(see \cite[Lemma 13.22]{T:LRT}).
Two LR triples $A,B,C$ and $A',B',C'$ over $\F$ are said to be {\em associated}
whenever there exist nonzero scalars $\alpha$, $\beta$, $\gamma$
in $\F$ such that $A' = \alpha A$, $B' = \beta B$, $C' = \gamma C$.

There is a special class of LR triples, said to have $q$-Weyl type.
This is described as follows.
Let $0 \neq q \in \F$.
An LR pair $A,B$ on $V$ is said to have {\em $q$-Weyl type} whenever
$q^2 \neq 1$ and
\[
  \frac{ q AB - q^{-1} BA }
         { q-q^{-1}  }
  = I.
\]
In this case, $d \geq 1$ and $q$ is a $(2d+2)$-root of unity
 (see \cite[Lemma 4.16]{T:LRT}).
An LR triple $A,B,C$ on $V$ is said to have {\em $q$-Weyl type} whenever
the LR pairs $A,B$ and $B,C$ and $C,A$ all have $q$-Weyl type.

Let $X \in \text{End}(V)$ and let $\{V_i\}_{i=0}^d$ be a decomposition of $V$.
We say $X$ is {\em tridiagonal with respect to $\{V_i\}_{i=0}^d$} whenever
\begin{align}
   X V_i \subseteq V_{i-1} + V_i + V_{i+1}  && (0 \leq i \leq d).    \label{eq:deftrid}
\end{align}
Let $\{E_i\}_{i=0}^d$ be the idempotent sequence for $\{V_i\}_{i=0}^d$.
Then $X$ satisfies \eqref{eq:deftrid} if and only if
$E_r X E_s = 0$ if $|r-s|>1$ $(0 \leq r,s \leq d)$.

Let $A,B,C$ be an LR triple on $V$.
Let ${\cal X}$ denote the subspace of $\text{End}(V)$ consisting of $X \in \text{End}(V)$
such that $X$ is tridiagonal with respect to the $(A,B)$ and $(B,C)$ and $(C,A)$ decompositions for $V$.
We call $\cal X$ the {\em tridiagonal space} for $A,B,C$.
Each LR triple on $V$ that is associated to $A,B,C$ has tridiagonal space $\cal X$
(see Corollary \ref{cor:invariant}).
The following elements are contained in ${\cal X}$ (see Lemma \ref{lem:contained1}):
\begin{equation}                  \label{eq:contained1}
 I, \quad
 A, \quad
 B, \quad
 C, \quad
 ABC, \quad
 BCA, \quad
 CAB, \quad
 ACB, \quad
 CBA, \quad
 BAC.
\end{equation} 
Moreover, if $A,B,C$ is bipartite, then $XJ \in {\cal X}$ for any $X \in {\cal X}$
(see Lemma \ref{lem:contained}).
In the present paper, we investigate the tridiagonal space ${\cal X}$.
Observe that ${\cal X} = \text{End}(V)$ when $d \leq 1$.
So we restrict our attention to the case $d \geq 2$.
We prove the following results:

\begin{theorem}            \label{thm:main2}    \samepage
\ifDRAFT {\rm thm:main2}. \fi
Let $A,B,C$ be a nonbipartite LR triple on $V$,
and let $\cal X$ be the corresponding tridiagonal space.
Assume that $A,B,C$ is not associated to an LR triple of $q$-Weyl type.
Then the following hold:
\begin{itemize}
\item[\rm (i)]
Assume $d=2$.
Then $\cal X$ has dimension $6$.
Moreover, the vector space $\cal X$ has a basis
\begin{equation}                                                  \label{eq:basisd2}
 I, \quad
 A, \quad
 B, \quad
 C, \;\quad
 A B C, \;\quad
 A C B.
\end{equation}
\item[\rm (ii)]
Assume $d \geq 3$.
Then $\cal X$ has dimension $7$.
Moreover, the vector space $\cal X$ has a basis
\begin{equation}                                                  \label{eq:basis}
 I, \quad
 A, \quad
 B, \quad
 C, \;\quad
 A B C, \;\quad
 A C B, \quad
 C A B.
\end{equation}
\end{itemize}
\end{theorem}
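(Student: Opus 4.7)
The plan is to pin down $\dim {\cal X}$ by establishing matching lower and upper bounds. Lemma~\ref{lem:contained1} already places the proposed basis elements inside ${\cal X}$, so for the lower bound it suffices to verify linear independence. For this I would evaluate each candidate on a generator $v_0 \in V_0$ of the $(A,B)$-decomposition and track the resulting $V_i$-components: since $A$ lowers and $B$ raises $\{V_i\}_{i=0}^d$, applied to $v_0$ the monomials $ABC$, $ACB$, $CAB$ have distinct leading $V_i$-components (using $A v_0 = 0$ and the controlled tridiagonal behavior of $C$ with respect to $\{V_i\}_{i=0}^d$), so any linear dependence among $I, A, B, C, ABC, ACB, CAB$ (or the $6$-element sublist at $d=2$) must be trivial.

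For the upper bound, I would parameterize an arbitrary $X \in {\cal X}$ by its tridiagonal matrix with respect to the $(A,B)$-decomposition, giving at most $3d+1$ scalar parameters. Imposing tridiagonality with respect to the $(B,C)$-decomposition translates, after conjugation by the transition matrix between the $(A,B)$- and $(B,C)$-decompositions, into a large homogeneous linear system on these parameters; the analogous computation for the $(C,A)$-decomposition produces further relations. The claim becomes that the combined system has solution space of exactly the asserted dimension, $6$ or $7$.

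The main obstacle is executing these constraint computations in adequate generality. The transition matrices depend on the trace data \eqref{eq:tracedata} and on the place of $A, B, C$ in the classification of \cite{T:LRT}, so I expect the proof to split into cases according to that classification (the nonbipartite non-$q$-Weyl types). The role of the $q$-Weyl hypothesis is to guarantee that certain pivotal coefficients arising in the constraint system are nonzero; were they to vanish, additional tridiagonal elements would become available and $\dim {\cal X}$ could grow. The dimension jump from $6$ at $d=2$ to $7$ at $d \geq 3$ should reflect an extra relation expressing $CAB$ modulo $\{I,A,B,C,ABC,ACB\}$ that is available only in the shallowest case, and identifying this relation (and verifying its failure for $d \geq 3$) will be the most delicate boundary computation.
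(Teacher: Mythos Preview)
Your upper-bound strategy is sound in outline and close in spirit to the paper's, though the paper executes it more economically. Rather than imposing the full set of tridiagonality constraints from the $(B,C)$- and $(C,A)$-decompositions on a generic tridiagonal matrix, the paper shows (Lemma~\ref{lem:vanish}) that any $X \in {\cal X}$ satisfying $X E'_d = 0$, $X A E'_d = 0$, and $E''_d X = 0$ must vanish, and then bounds $\dim {\cal X}E'_d \le 2$, $\dim {\cal X}AE'_d \le 3$, $\dim E''_d{\cal X} \le 2$ (Lemma~\ref{lem:image}) to conclude $\dim{\cal X} \le 7$. Both approaches ultimately rely on the classification; the key nonvanishing input $\alpha_i^2 \vphi_i \neq \alpha_{i-1}\alpha_{i+1}\vphi_{i+1}$ (Lemma~\ref{lem:cond}) is verified case by case for $\text{NBG}_d(\F;q)$, $\text{NBG}_d(\F;1)$, $\text{NBNG}_d(\F;t)$.

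Your lower-bound argument, however, has a real gap. Evaluating the seven candidates on a single vector $v_0 \in V_0$ cannot separate them: since $A v_0 = 0$ and $C$ is tridiagonal with respect to $\{V_i\}_{i=0}^d$, one checks that each of $I v_0$, $C v_0$, $ABC\,v_0$, $ACB\,v_0$, $CAB\,v_0$ lands in $V_0 + V_1$, while $Av_0 = 0$ and $Bv_0 \in V_1$. All images therefore lie in the $2$-dimensional space $V_0 + V_1$, so the claim of ``distinct leading $V_i$-components'' is false, and no single-vector evaluation can establish independence of seven operators. The paper instead writes a general combination $Y = eI + f_1 A + f_2 B + f_3 C + g_1 ABC + g_2 ACB + g_3 CAB$ in matrix form with respect to an $(A,B)$-basis, extracts seven specific entries $Y_{i,j}$, and verifies that the resulting $7\times 7$ coefficient determinant is nonzero in each of the three families. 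This computation genuinely uses the explicit parameter arrays and is where the non-$q$-Weyl hypothesis does its work on the independence side; it cannot be replaced by the soft argument you sketch.
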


\begin{theorem}    \label{thm:main1}    \samepage
\ifDRAFT {\rm thm:main1}. \fi
Let $A,B,C$ be a bipartite LR triple on $V$,
and let ${\cal X}$ be the corresponding tridiagonal space.
Then ${\cal X}  = {\cal X} J + {\cal X} (I-J)$ (direct sum).
Moreover, the following hold:
\begin{itemize}
\item[\rm (i)]
Assume $d=2$.
Then ${\cal X}$ has dimension $6$.
The vector space ${\cal X} J$ has a basis
\begin{equation}                                                             \label{eq:basisd2XJ}
  J, \;\quad
  A J, \quad
  B J,
\end{equation}
and the space ${\cal X} (I-J)$ has a basis
\begin{equation}                                                       \label{eq:basisd2XJd}
  I-J, \quad
  A (I-J), \quad
  B (I-J).
\end{equation}
\item[\rm (ii)]
Assume $d \geq 4$.
Then $\cal X$ has dimension $8$.
The vector space  ${\cal X} J$ has a basis
\begin{equation}                                                             \label{eq:basisXJ}
  J, \;\quad
  A J, \quad
  B J, \quad
  A C B J,
\end{equation}
and the space ${\cal X} (I-J)$ has a basis
\begin{equation}                                                       \label{eq:basisXJd}
  I-J, \quad
  A (I-J), \quad
  B (I-J), \quad
  A B C (I-J).
\end{equation}
\end{itemize}
\end{theorem}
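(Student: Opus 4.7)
The plan is to first split $\mathcal{X}$ via the projection $J$, then verify that the listed elements lie in the two summands and are linearly independent, and finally establish spanning via a dimension count that leans on the classification of LR triples.

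For the direct sum: Lemma~\ref{lem:contained} gives $XJ, X(I-J) \in \mathcal{X}$ for every $X \in \mathcal{X}$, so $X = XJ + X(I-J)$ shows $\mathcal{X} = \mathcal{X}J + \mathcal{X}(I-J)$; this sum is direct because $YJ = Z(I-J)$ with $Y, Z \in \mathcal{X}$ implies $YJ = YJ \cdot J = Z(I-J)J = 0$. Next, all elements listed in \eqref{eq:basisd2XJ}--\eqref{eq:basisXJd} lie in $\mathcal{X}$: the generators $I, A, B, ACB, ABC$ are in $\mathcal{X}$ by \eqref{eq:contained1}, and right-multiplication by $J$ or $I-J$ keeps us inside $\mathcal{X}$ by Lemma~\ref{lem:contained}.

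For linear independence I would track the action on the even-indexed subspaces $V_{2j}$ of the $(A,B)$-decomposition: $J$ restricts to the identity on each $V_{2j}$, while $AJ$ sends $V_{2j}$ into $V_{2j-1}$ and $BJ$ sends $V_{2j}$ into $V_{2j+1}$; both land inside $(I-J)V$, so these three elements are linearly independent. In case (ii), the element $ACBJ$ maps $V_{2j}$ into $V_{2j-1} + V_{2j} + V_{2j+1}$ with a generically nonzero $V_{2j}$-component, separating it from the span of $J, AJ, BJ$ at some index $j$. The analogous argument handles \eqref{eq:basisd2XJd} and \eqref{eq:basisXJd} inside $\mathcal{X}(I-J)$.

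The main obstacle is the matching upper bound: $\dim \mathcal{X}J \leq 3$ in case (i) and $\leq 4$ in case (ii), and symmetrically for $\mathcal{X}(I-J)$. My plan here is to work in an $(A,B)$-adapted basis and write a generic $X \in \mathcal{X}J$ as a matrix. The condition $X = XJ$ kills the odd-indexed columns; tridiagonality against the $(A,B)$-decomposition eliminates entries far from the diagonal; and the remaining tridiagonality conditions against the $(B,C)$- and $(C,A)$-decompositions become linear constraints on the surviving entries, expressed through the transition matrices between the three decompositions, which are available explicitly from the classification of LR triples in \cite{T:LRT}. Running through the bipartite families case by case, one solves the resulting homogeneous linear system and verifies that its solution space has the claimed dimension. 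Summing with the $\mathcal{X}(I-J)$ contribution via the direct sum then yields $\dim \mathcal{X} = 6$ (resp.\ $8$) and completes the proof.
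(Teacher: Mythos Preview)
Your direct-sum argument and your membership argument match the paper and are fine. The real problem is in your linear-independence step for case~(ii).

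You claim that $ACBJ$ sends $V_{2j}$ into $V_{2j-1}+V_{2j}+V_{2j+1}$ with a generically nonzero $V_{2j}$-component, and that this separates it from $\mathrm{span}\{J,AJ,BJ\}$. But in the bipartite case the diagonal entries of $C$ in the $(A,B)$-basis are the trace data $a_i$, and these are all zero by definition of bipartite (Lemma~\ref{lem:entriesABC}). Tracking $v_{2j}\xmapsto{B}\varphi_{2j+1}v_{2j+1}\xmapsto{C}\ast\, v_{2j}+\ast\, v_{2j+2}\xmapsto{A}\ast\, v_{2j-1}+\ast\, v_{2j+1}$ shows $ACBJ$ lands entirely in $V_{2j-1}+V_{2j+1}$, with \emph{no} $V_{2j}$-component. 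So $J,AJ,BJ,ACBJ$ all have the same parity profile: the first preserves even parity, the other three reverse it. Your proposed separation mechanism does not exist.

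What actually distinguishes $ACBJ$ from a combination of $AJ$ and $BJ$ is the \emph{ratio} of its $V_{2j-1}$- and $V_{2j+1}$-components as $j$ varies, and this requires a genuine computation plus a nonvanishing fact about the parameter array. The paper computes three specific matrix entries (rows $1,1,3$; columns $0,2,2$) and reduces independence to $\varphi''_{d-1}\ne\varphi''_{d-3}$, which is Lemma~\ref{lem:vphii-1i+1}; the analogous check for the $(I-J)$ side uses $\varphi'_d\ne\varphi'_{d-2}$. You need some ingredient of this kind.

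Your plan for the upper bound (work in an $(A,B)$-basis, impose the three tridiagonality conditions as linear constraints, solve) is the right shape and close to what the paper does. The paper bounds $\dim\mathcal{X}\le 8$ directly in Lemma~\ref{lem:mainB} rather than bounding $\mathcal{X}J$ and $\mathcal{X}(I-J)$ separately, using only the $E''_r X E''_s=0$ relations and the inequality $\alpha_{2i}^2\ne\alpha_{2i-2}\alpha_{2i+2}$ (Lemma~\ref{lem:condbipartite}) checked once for each bipartite family; your version would also work but with more bookkeeping.
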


\begin{note}
When $A,B,C$ has $q$-Weyl type,
the elements
\[
  ABC, \quad
  BCA, \quad
  CAB, \quad
  ACB, \quad
  CBA, \quad
  BAC
\]
are contained in the span of $I$, $A$, $B$, $C$
(see \cite[Lemma 15.30]{T:LRT}).
\end{note}

\begin{problem}
For an LR triple $A,B,C$ of $q$-Weyl type,
find the dimension and a basis for the tridiagonal space $\cal X$.
\end{problem}

The paper is organized as follows.
In Section \ref{sec:bases} we consider 12 bases for $V$.
In Section \ref{sec:trans} we obtain
the transition matrices between these 12 bases.
In Section \ref{sec:idempotents} we obtain the matrices that represent
the idempotents with respect to the 12 bases.
In Section \ref{sec:lemmas} we prepare some lemmas concerning the tridiagonal space.
In Sections \ref{sec:nonbipartite}--\ref{sec:proofnonbipartite} we consider
nonbiparite LR triples.
In Section \ref{sec:nonbipartite} we recall the classification of nonbiparitite 
LR triples.
In Sections \ref{sec:boundnonbipartite} and \ref{sec:proofnonbipartite}
we prove Theorem \ref{thm:main2}.
In Sections \ref{sec:bipartite}--\ref{sec:proofbipartite} we consider
bipartite LR triples.
In Section \ref{sec:bipartite} we recall the classification of bipartite LR triples.
In Sections \ref{sec:boundbipartite} and \ref{sec:proofbipartite} we prove
Theorem \ref{thm:main1}.
In Appendix 1 we represent the elements \eqref{eq:contained1}
in terms of \eqref{eq:basis}.
In Appendix 2 we represent the elements \eqref{eq:contained1} times $J$
in terms of \eqref{eq:basisXJ},
and represent the elements \eqref{eq:contained1} times $I-J$
in terms of \eqref{eq:basisXJd}.

\section{Some bases for $V$}
\label{sec:bases}

Let $A,B$ be an LR pair on $V$, and let $\{V_i\}_{i=0}^d$ be the
$(A,B)$-decomposition of $V$.
By \cite[Lemma 3.12]{T:LRT},
for $0 \leq i \leq d$ the subspace $V_i$ is invariant under $AB$ and $BA$.
Moreover, for $1 \leq i \leq d$,
the eigenvalue of $AB$ on $V_{i-1}$ is nonzero and equal to the eigenvalue
of $BA$ on $V_i$. We denote this eigenvalue by $\vphi_i$.
The sequence $\{\vphi_i\}_{i=1}^d$ is called the  {\em parameter sequence}
for $A,B$.
We emphasize that $\vphi_i \neq 0$ for $1 \leq i \leq d$.
For notational convenience define $\vphi_0 = 0$ and $\vphi_{d+1}=0$.
A basis $\{v_i\}_{i=0}^d$ for $V$ is called an {\em $(A,B)$-basis} whenever
$v_i \in V_i$ for $0 \leq i \leq d$ and $Av_i = v_{i-1}$ for $1 \leq i \leq d$.
A basis $\{v_i\}_{i=0}^d$ for $V$ is called an {\em inverted $(A,B)$-basis}
whenever its inversion $\{v_{d-i}\}_{i=0}^d$ is an $(A,B)$-basis for $V$.

Let $A,B,C$ be an LR triple on $V$.
As we discuss this LR triple,
we will use the following notation:

\begin{definition}   {\rm (See \cite[Definition 13.4]{T:LRT}.) } \label{def:prime}  \samepage
\ifDRAFT {\rm def:prime}. \fi
Let $A,B,C$ be an LR triple.
For any object $f$ associated with the LR triple $A,B,C$,
let $f'$ (resp.\ $f''$) denote the corresponding object for the LR triple
$B,C,A$ (resp.\ $C,A,B$).
\end{definition}

\begin{definition}   {\rm (See \cite[Definition 13.21]{T:LRT}.) }   \samepage
Let $A,B,C$ be an LR triple on $V$.
So the pair $A,B$ (resp.\ $B,C$) (resp.\ $C,A$) is an LR pair on $V$.
Following the notational convention in Definition \ref{def:prime}, for these LR pairs
the parameter sequence is denoted as follows:
\[
  \begin{array}{c|c}
    \text{LR pair} & \text{parameter sequence}
  \\ \hline
    A, B &  \{\vphi_i\}_{i=1}^d
 \\
  B,C & \{\vphi'_i\}_{i=1}^d
 \\
  C,A & \{\vphi''_i\}_{i=1}^d
  \end{array}
\]
We call the sequence 
\begin{equation}
   (\{\vphi_i\}_{i=1}^d, \{\vphi'_i\}_{i=1}^d, \{\vphi''_i\}_{i=1}^d)    \label{eq:parray}
\end{equation}
the {\em parameter array} of the LR triple $A,B,C$.
\end{definition}

Let $A,B,C$ be an LR triple on $V$.
Associated with $A,B,C$ are $12$ types of bases for $V$:
\begin{equation}                               \label{eq:12bases}
\begin{array}{cccc}
 (A,B), \quad & \text{inverted $(A,B)$}, \quad &  (B,A), \quad & \text{inverted $(B,A)$},
\\
 (B,C), \quad & \text{inverted $(B,C)$}, \quad &  (C,B), \quad & \text{inverted $(C,B)$},
\\
 (C,A), \quad & \text{inverted $(C,A)$}, \quad &  (A,C), \quad & \text{inverted $(A,C)$}.
\end{array}
\end{equation}

\begin{lemma}  {\rm (See \cite[Lemma 13.19]{T:LRT}.) }  \label{lem:trid}    \samepage
\ifDRAFT {\rm lem:trid}. \fi
Let $\{v_i\}_{i=0}^d$ be a basis for $V$ that has one of the $12$ types \eqref{eq:12bases}.
Then for each $A,B,C$, the matrix representing it with respect to $\{v_i\}_{i=0}^d$
is tridiagonal.
\end{lemma}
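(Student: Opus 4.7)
The plan is to reduce the 12 basis cases to essentially one, then prove it by combining two filtration arguments. First I would observe that on an $(A,B)$-basis $\{v_i\}_{i=0}^d$ the matrices of $A$ and $B$ are already bidiagonal: $Av_i = v_{i-1}$ is subdiagonal by definition, and the identity $ABv_i = \vphi_{i+1}v_i$ forces $Bv_i = \vphi_{i+1}v_{i+1}$, which is superdiagonal. The analogous remark applies to any of the 12 basis types for the two operators that name the underlying ordered LR pair. So the only nontrivial claim for each basis type is that the third operator acts tridiagonally.

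For the representative case---that $C$ is tridiagonal on an $(A,B)$-basis---let $\{V_i\}$, $\{U_i\}$, $\{W_i\}$ be the $(A,B)$-, $(B,C)$-, $(C,A)$-decompositions, and set $F_i = V_0+\cdots+V_i$ and $G^i = V_i+\cdots+V_d$. Tridiagonality of $C$ with respect to $\{V_i\}$ would follow from $CF_i \subseteq F_{i+1}$ and $CG^i \subseteq G^{i-1}$, since then $CV_i \subseteq F_{i+1}\cap G^{i-1} = V_{i-1}+V_i+V_{i+1}$. The first inclusion I would get by reidentifying $F_i$ via the $(C,A)$-decomposition: since $A$ raises $\{W_j\}$, one has $F_i = \Ker A^{i+1} = W_{d-i}+\cdots+W_d$, and since $C$ lowers $\{W_j\}$, $CF_i = W_{d-i-1}+\cdots+W_{d-1} \subseteq F_{i+1}$. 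The second comes from reidentifying $G^i$ via the $(B,C)$-decomposition: since $B$ lowers $\{U_j\}$, one has $G^i = \text{Im}\,B^i = U_0+\cdots+U_{d-i}$, and since $C$ raises $\{U_j\}$, $CG^i = U_1+\cdots+U_{d-i+1} \subseteq G^{i-1}$.

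Finally I would invoke the symmetries to dispatch the remaining 11 bases. Inverting a basis conjugates each representing matrix by the reversal permutation, which preserves tridiagonality, so inverted cases follow immediately from their un-inverted counterparts. The cyclic symmetry built into the LR triple definition (the prime notation introduced just above) reduces the six ordered pairs $(A,B),(B,C),(C,A),(B,A),(C,B),(A,C)$ to two orbits; the $(A,B)$-orbit is handled above, and the $(B,A)$-orbit is entirely parallel, using instead the filtrations $\tilde{F}_i = \Ker B^{i+1} = U_0+\cdots+U_i$ and $\tilde{G}^i = \text{Im}\,A^i = W_i+\cdots+W_d$, so that $C\tilde{F}_i \subseteq \tilde{F}_{i+1}$ because $C$ raises $\{U_j\}$, and $C\tilde{G}^i \subseteq \tilde{G}^{i-1}$ because $C$ lowers $\{W_j\}$. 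I do not foresee a substantive obstacle; the only risk is bookkeeping---matching each basis type to the pair of decompositions via which $C$ lowers one and raises the other in exactly the way that makes both filtration inclusions point in the right direction.
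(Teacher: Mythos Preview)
Your argument is correct. The paper does not actually prove this lemma; it simply cites \cite[Lemma 13.19]{T:LRT} and moves on, so there is no in-paper proof to compare against. Your filtration approach---identifying $V_0+\cdots+V_i$ with $\Ker A^{i+1}$ (hence with the tail of the $(C,A)$-decomposition) and $V_i+\cdots+V_d$ with $\operatorname{Im} B^i$ (hence with the head of the $(B,C)$-decomposition)---is exactly the kind of argument that underlies the cited result, and your reduction of the remaining eleven basis types via inversion and the cyclic symmetry is clean and complete. One cosmetic slip: $Av_i=v_{i-1}$ makes the matrix of $A$ strictly \emph{upper} bidiagonal (the nonzero entries are $A_{i-1,i}=1$, cf.\ Lemma~\ref{lem:entriesABC}), not subdiagonal as you wrote; this does not affect anything substantive.
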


Let \eqref{eq:parray} be the parameter array and let
\eqref{eq:tracedata} be the trace data of $A,B,C$.

\begin{lemma}    {\rm (See \cite[Proposition 13.39]{T:LRT}.) }
\label{lem:entriesABC}    \samepage
\ifDRAFT {\rm lem:entriesABC}. \fi
Fix an $(A,B)$-basis $\{v_i\}_{i=0}^d$ for $V$.
Identify each element of $\text{\rm End}(V)$ with the matrix representing it with
respect to $\{v_i\}_{i=0}^d$.
Then each of $A$, $B$, $C$ is tridiagonal with the following entries:
\[
 \begin{array}{ccc|ccc|ccc}
   A_{i,i-1} & A_{i,i} & A_{i-1,i} & 
   B_{i,i-1} & B_{i,i} & B_{i-1,i} & C_{i,i-1} & C_{i,i} & C_{i-1,i}
 \\ \hline
  0 & 0 & 1 & \vphi_i & 0 & 0 & \vphi''_{d-i+1} & a_i & \vphi'_{d-i+1}/\vphi_i   \rule{0mm}{4mm}
 \end{array}
\]
\end{lemma}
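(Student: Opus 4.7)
The plan is to verify the matrix entries in the order $A$, $B$, $C$-diagonal, $C$-subdiagonal, $C$-superdiagonal, with the first three being essentially immediate and the last two requiring a comparison between $\{v_i\}_{i=0}^d$ and other bases among the $12$ listed in \eqref{eq:12bases}. For $A$, the definition of an $(A,B)$-basis gives $Av_i = v_{i-1}$, which places a single $1$ in each column and zeros at the other two tridiagonal positions. For $B$, combining $BA v_i = \vphi_i v_i$ (the defining property of $\vphi_i$, \cite[Lemma 3.12]{T:LRT}) with $Av_i = v_{i-1}$ gives $Bv_{i-1} = \vphi_i v_i$, supplying $B_{i,i-1}=\vphi_i$ and forcing the other tridiagonal entries of $B$ to vanish. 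Tridiagonality of $C$ comes from Lemma \ref{lem:trid}, and its diagonal entries follow from $a_i = \tr(CE_i)$ because the idempotent $E_i$ is represented in the $(A,B)$-basis by the matrix unit $e_{ii}$, so $CE_i$ has only its $i$-th column nonzero (equal to the $i$-th column of $C$), forcing $\tr(CE_i) = C_{i,i}$.

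For the subdiagonal entry $C_{i,i-1}$, I would transition to an inverted $(C,A)$-basis $\{u_j\}_{j=0}^d$, which satisfies $u_j \in V''_{d-j}$, $Cu_j = u_{j+1}$, and $Au_j = \vphi''_{d-j+1}\, u_{j-1}$. The key geometric input is that $A$ has the same kernel filtration in both decompositions, namely
\[
V_0 + \cdots + V_k \;=\; \Ker A^{k+1} \;=\; V''_d + V''_{d-1} + \cdots + V''_{d-k}.
\]
Consequently the change-of-basis matrix $U$ in $v_i = \sum_{j=0}^i U_{ji} u_j$ is upper triangular. Applying $A$ to this expansion and matching $u_k$-coefficients against $v_{i-1} = \sum_k U_{k,i-1} u_k$ yields the recursion $U_{k,i-1} = \vphi''_{d-k}\, U_{k+1,i}$. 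Writing $Cv_{i-1} = \sum_{k=1}^{i} U_{k-1,i-1}\, u_k$, the upper-triangularity of $U^{-1}$ forces the $v_i$-coefficient to come from the single term $k=i$, yielding $C_{i,i-1} = U_{i-1,i-1}/U_{ii} = \vphi''_{d-i+1}$ via the recursion.

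The superdiagonal entry $C_{i-1,i}$ is handled symmetrically using a $(B,C)$-basis $\{t_j\}_{j=0}^d$ with $Bt_j = t_{j-1}$ and $Ct_j = \vphi'_{j+1}\, t_{j+1}$. The coincidence of the kernel filtrations for $B$ now reads $V_{d-k} + \cdots + V_d = \Ker B^{k+1} = V'_0 + \cdots + V'_k$, so the transition $v_i = \sum_{j=0}^{d-i} R_{ji} t_j$ is supported on $j + i \leq d$. Applying $B$ to this expansion yields the recursion $R_{k+1,i} = \vphi_{i+1}\, R_{k,i+1}$. The identity $RR^{-1}=I$ at position $(i-1,i-1)$ collapses to a single term and gives $(R^{-1})_{i-1,\,d-i+1} = 1/R_{d-i+1,\,i-1}$. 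Expanding $Cv_i = \sum_k R_{k-1,i}\,\vphi'_k\, t_k$ and collecting the $v_{i-1}$-coefficient again picks out only $k = d-i+1$, producing $C_{i-1,i} = R_{d-i,i}\,\vphi'_{d-i+1}/R_{d-i+1,\,i-1} = \vphi'_{d-i+1}/\vphi_i$ after the recursion.

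I expect the main obstacle to be bookkeeping: aligning the two triangular-support conventions for the transition matrices, and correctly tracking the reversal $i \mapsto d-i+1$ that appears when moving between decompositions (the bottom of the $(A,B)$-decomposition is the top of the $(C,A)$-decomposition, and the top of the $(A,B)$-decomposition is the bottom of the $(B,C)$-decomposition). Once the geometric picture of the coincident $A$- and $B$-filtrations is in place, the algebraic identifications each collapse to a single-term computation, so the real work is in the setup rather than in the arithmetic.
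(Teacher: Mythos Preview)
Your argument is correct. Note, however, that the paper does not prove this lemma at all: it is quoted verbatim from \cite[Proposition 13.39]{T:LRT}, so there is no ``paper's own proof'' to compare against. What you have written is a self-contained verification that would serve well if one wanted to avoid the citation.

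Your method is the natural one. The entries for $A$ and $B$ and the diagonal of $C$ are immediate from the definitions, and for the off-diagonal entries of $C$ you exploit the fact that $A$ (respectively $B$) acts compatibly on both the $(A,B)$-decomposition and the $(C,A)$-decomposition (respectively the $(B,C)$-decomposition), forcing the relevant transition matrices to be triangular with respect to the anti-diagonal. The single-term collapse you describe is exactly right: in each case the support constraints on $U$ (or $R$) and its inverse intersect in one index, and the recursion coming from $Av_i=v_{i-1}$ (or $Bv_i=\vphi_{i+1}v_{i+1}$) supplies the ratio of the surviving diagonal entries. The bookkeeping you flagged as the main hazard is handled correctly throughout; in particular your recursion $R_{k+1,i}=\vphi_{i+1}R_{k,i+1}$ specialized at $k=d-i$, $i\to i-1$ gives $R_{d-i+1,i-1}=\vphi_i R_{d-i,i}$, which is precisely what is needed to produce $\vphi'_{d-i+1}/\vphi_i$.
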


\section{Transition matrices}
\label{sec:trans}

Let $A,B,C$ be an LR triple on $V$ with parameter array \eqref{eq:parray}.
In this section, we consider the transition matrices between the 12 bases \eqref{eq:12bases}.

\begin{definition}   \label{def:compatible}   \samepage
\ifDRAFT {\rm def:compatible}. \fi
Two bases $\{v_i\}_{i=0}^d$ and $\{u_i\}_{i=0}^d$ for $V$ are said to be
{\em compatible} whenever $v_0 = u_0$.
\end{definition}

\begin{definition}  {\rm (See \cite[Definition 13.44]{T:LRT}.) }  \label{def:T}   \samepage
\ifDRAFT {\rm def:T}. \fi
Define matrices $T$, $T'$, $T''$ in $\Mat_{d+1}(\F)$ as follows:
\begin{itemize}
\item[]
$T$ is the transition matrix from a $(C,B)$-basis to a compatible $(C,A)$-basis;
\item[]
$T'$ is the transition matrix from an $(A,C)$-basis to a compatible $(A,B)$-basis;
\item[]
$T''$ is the transition matrix from a $(B,A)$-basis to a compatible $(B,C)$-basis.
\end{itemize}
\end{definition}

Let $\{\alpha_i\}_{i=0}^d$ be scalars in $\F$.
An upper triangular matrix $T \in \Mat_{d+1}(\F)$ is called {\em Toeplitz with parameters $\{\alpha_i\}_{i=0}^d$}
whenever $T$ has $(i,j)$-entry $\alpha_{j-i}$ for $0 \leq i \leq j \leq d$:
\[
 T = 
  \begin{pmatrix}
    \alpha_0 & \alpha_1 & \cdot & \cdot & \cdot & \alpha_d
  \\
    & \alpha_0 & \alpha_1 & \cdot & \cdot & \cdot
  \\
    & &  \alpha_0 & \cdot & \cdot & \cdot 
   \\
   & & &  \cdot & \cdot & \cdot 
   \\
   & & & & \cdot & \alpha_1
   \\
  \text{\bf 0} & & & & & \alpha_0
  \end{pmatrix}.
\]
This matrix is invertible if and only if $\alpha_0 \neq 0$.
In this case, $T^{-1}$ is upper triangular and Toeplitz (see \cite[Section 12]{T:LRT}).

\begin{lemma}  {\rm (See \cite[Proposition 12.8]{T:LRT}.) } \label{lem:Toeplitz}   \samepage
\ifDRAFT {\rm lem:Toeplitz}. \fi
With reference to Definition \ref{def:T}, each of $T$, $T'$, $T''$ is upper triangular and Toeplitz.
\end{lemma}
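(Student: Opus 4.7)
The plan is to prove the claim for $T$ in detail and then deduce the cases of $T'$ and $T''$ by invoking the cyclic symmetry encoded in Definition \ref{def:prime}. The key observation is that a $(C,B)$-basis $\{v_i\}_{i=0}^d$ and a $(C,A)$-basis $\{u_i\}_{i=0}^d$ both satisfy the same lowering recursion under $C$, namely $Cv_i = v_{i-1}$ and $Cu_i = u_{i-1}$ (with $v_{-1} = u_{-1} = 0$). Since $C$ lowers both the $(C,B)$- and the $(C,A)$-decomposition of $V$, the zeroth subspace of each coincides with $\Ker C$, so the compatibility condition $v_0 = u_0$ is meaningful and always realizable: $C^d$ restricts to an isomorphism from the top subspace of the $(C,A)$-decomposition onto $\Ker C$, so any prescribed nonzero $v_0 \in \Ker C$ can be written as $C^d u_d$ for some $u_d$ generating a compatible $(C,A)$-basis.

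Writing $v_j = \sum_{i=0}^d T_{i,j} u_i$ and applying $C$ to both sides, the lowering relations give, for $1 \leq j \leq d$,
\[
\sum_{i=0}^d T_{i,j-1}\,u_i \;=\; v_{j-1} \;=\; Cv_j \;=\; \sum_{i=1}^d T_{i,j}\,u_{i-1} \;=\; \sum_{i=0}^{d-1} T_{i+1,j}\,u_i.
\]
Comparing coefficients yields $T_{i,j-1} = T_{i+1,j}$ for all valid $(i,j)$, which says that $T_{i,j}$ depends only on the difference $j-i$; this is precisely the Toeplitz condition. The compatibility $v_0 = u_0$ forces $T_{i,0} = \delta_{i,0}$, and combined with the Toeplitz property this yields $T_{i,j} = 0$ whenever $i > j$, so $T$ is upper triangular as well.

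Finally, under the prime convention of Definition \ref{def:prime}, the matrix $T$ associated with the LR triple $B,C,A$ (respectively $C,A,B$) is exactly $T'$ (respectively $T''$) for $A,B,C$. Hence the results for $T'$ and $T''$ follow by applying the argument above to the other two cyclic orderings of the triple, with the roles of the common lowering operator now played by $A$ and $B$ respectively. No serious obstacle arises here; the only step that needs a little care is the realization of the compatibility condition, which is handled by the isomorphism $C^d \colon V^{(C,A)}_d \to \Ker C$ (and its cyclic analogues) noted above.
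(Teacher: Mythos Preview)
Your argument is correct. Note, however, that the paper itself does not prove this lemma; it simply quotes Proposition~12.8 of \cite{T:LRT}. Your proof supplies exactly what the paper omits, and it is essentially the standard argument (and presumably the one in the cited source): both a $(C,B)$-basis and a $(C,A)$-basis are lowered by $C$, so applying $C$ to the change-of-basis relation yields the recursion $T_{i,j-1}=T_{i+1,j}$, hence the Toeplitz property, and the compatibility $v_0=u_0$ forces upper-triangularity.

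One small remark on conventions: with the transition-matrix convention used in this paper (cf.\ Lemma~\ref{lem:trans}), the transition matrix from the $(C,B)$-basis $\{v_i\}$ to the $(C,A)$-basis $\{u_i\}$ has the $u_j$ as its columns in $v$-coordinates, so strictly one should write $u_j=\sum_i T_{i,j}v_i$ rather than $v_j=\sum_i T_{i,j}u_i$. Since both bases are lowered by $C$, the computation is symmetric in $u$ and $v$ and the conclusion is unaffected.
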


\begin{definition}  {\rm (See \cite[Definition 13.45]{T:LRT}.) }  \label{def:Toeplitzdata}   \samepage
\ifDRAFT {\rm def:Toeplitzdata}. \fi
With reference to Lemma \ref{lem:Toeplitz},
let $\{\alpha_i\}_{i=0}^d$ (resp.\ $\{\alpha'_i\}_{i=0}^d$)  (resp.\ $\{\alpha''_i\}_{i=0}^d$)
be the Toeplitz parameters for $T$ (resp.\ $T'$) (resp.\ $T''$).
Let $\{\beta_i\}_{i=0}^d$ (resp.\ $\{\beta'_i\}_{i=0}^d$) (resp.\ $\{\beta''_i\}_{i=0}^d$)
be the Toeplitz parameters for $T^{-1}$ (resp.\ $(T')^{-1}$) (resp.\ $(T'')^{-1}$).
We call the sequence
\begin{equation}
  (\{\alpha_i\}_{i=0}^d, \{\beta_i\}_{i=0}^d; 
   \{\alpha'_i\}_{i=0}^d, \{\beta'_i\}_{i=0}^d; 
   \{\alpha''_i\}_{i=0}^d, \{\beta''_i\}_{i=0}^d)        \label{eq:Toeplitzdata}
\end{equation}
the {\em Teoplitz data} for $A,B,C$.
\end{definition}

\begin{lemma}   {\rm (See \cite[Lemma 13.46]{T:LRT}.) }  \label{lem:13.46}    \samepage
\ifDRAFT {\rm lem:13.46}. \fi
With reference to Definition \ref{def:Toeplitzdata},
\begin{align*}
 \alpha_0 &= 1, &
 \alpha'_0 &= 1, &
 \alpha''_0 &= 1, &
 \beta_0 &= 1, &
 \beta'_0 &= 1, &
 \beta''_0 &= 1.
\end{align*}
Moreover, when $d \geq 1$,
\begin{align*}
 \beta_1 &= - \alpha_1, &
 \beta'_1 &= - \alpha'_1, &
 \beta''_1 &= - \alpha''_1.
\end{align*}
\end{lemma}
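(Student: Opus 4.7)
The plan is to extract both assertions directly from the definition of the transition matrices together with the algebraic identity $TT^{-1}=I$ (and similarly for the primed/double-primed versions). Since the three statements $\alpha_0 = \alpha'_0 = \alpha''_0 = 1$ (and their $\beta$ counterparts, and the three $\beta_1$ relations) are structurally identical, I would prove the $\alpha, \beta$ case and note that the $\alpha', \beta'$ and $\alpha'', \beta''$ cases follow by applying Definition \ref{def:prime}.

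First, I would unwind Definition \ref{def:T}. Let $\{v_i\}_{i=0}^d$ be a $(C,B)$-basis and $\{u_i\}_{i=0}^d$ a compatible $(C,A)$-basis, so $v_0 = u_0$. Under the standard convention, $T$ being the transition matrix means $u_j = \sum_{i=0}^d T_{i,j} v_i$. Reading off column $0$ of the Toeplitz form displayed above Lemma \ref{lem:Toeplitz}, only the $(0,0)$-entry is nonzero, equal to $\alpha_0$. Hence $u_0 = \alpha_0 v_0$, and compatibility forces $\alpha_0 = 1$.

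Next, since $T$ is upper triangular and Toeplitz with leading entry $\alpha_0 = 1 \neq 0$, it is invertible, and by the remark following the definition of Toeplitz, $T^{-1}$ is again upper triangular and Toeplitz, say with parameters $\{\beta_i\}_{i=0}^d$. Computing the $(0,0)$-entry of $TT^{-1} = I$ gives $\alpha_0 \beta_0 = 1$, so $\beta_0 = 1$. Assuming $d \geq 1$, computing the $(0,1)$-entry of $TT^{-1} = I$ gives
\[
 0 \;=\; (TT^{-1})_{0,1} \;=\; T_{0,0}\,(T^{-1})_{0,1} + T_{0,1}\,(T^{-1})_{1,1} \;=\; \alpha_0 \beta_1 + \alpha_1 \beta_0,
\]
which together with $\alpha_0 = \beta_0 = 1$ yields $\beta_1 = -\alpha_1$.

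Finally, to obtain the primed and double-primed statements, I would invoke Definition \ref{def:prime}: the LR triples $B,C,A$ and $C,A,B$ satisfy the same setup, so the analogous transition matrices $T'$ and $T''$ are likewise upper triangular Toeplitz by Lemma \ref{lem:Toeplitz}, and the same two-line argument applies verbatim. There is no real obstacle here; the only subtle point is just being careful about the transition-matrix convention and that compatibility is used precisely to pin down the leading Toeplitz parameter.
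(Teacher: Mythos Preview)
Your argument is correct. The compatibility condition $u_0 = v_0$ forces the leading Toeplitz parameter to be $1$, and reading off the $(0,0)$- and $(0,1)$-entries of $TT^{-1}=I$ immediately gives $\beta_0=1$ and $\beta_1=-\alpha_1$; the primed and double-primed versions then follow by the cyclic shift in Definition~\ref{def:prime}, exactly as you say. One small remark: you could also obtain $\beta_0=1$ without computing any entry of $TT^{-1}$, since $T^{-1}$ is itself the transition matrix from the $(C,A)$-basis back to the $(C,B)$-basis, and these two bases are still compatible.

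As for comparison with the paper: there is nothing to compare. The paper does not supply a proof of this lemma; it is quoted directly from \cite[Lemma~13.46]{T:LRT}. Your short derivation is essentially the standard one and would be the natural way to verify the cited result.
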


\begin{definition}   \label{def:matrixD}   \samepage
\ifDRAFT {\rm def:matrixD}. \fi
Let $Z$ denote the matrix in $\Mat_{d+1}(\F)$ that has $(i,j)$-entry
$\delta_{i+j,d}$ for $0 \leq i,j \leq d$.
For example if $d=3$,
\[
 Z = 
\begin{pmatrix}
 0 & 0 & 0 & 1  \\
 0 & 0 & 1 & 0  \\
 0 & 1 & 0 & 0  \\
 1 & 0 & 0 & 9
\end{pmatrix}.
\]
Observe that $Z$ is invertible and $Z^{-1} = Z$.
Let $D$ (resp.\ $D'$) (resp.\ $D''$)
denote the diagonal matrix  in $\Mat_{d+1}(\F)$ that has $(i,i)$-entry $\vphi_1 \cdots \vphi_i$
(resp.\ $\vphi'_1 \cdots \vphi'_i$) (resp.\ $\vphi''_1 \cdots \vphi''_i$) 
for $0 \leq i \leq d$.
\end{definition}

\begin{lemma} {\rm (See \cite[Lemma 3.48]{T:LRT}.) }   \label{lem:LRPtransition}   \samepage
\ifDRAFT {\rm lem:LRPtransition}. \fi
\begin{itemize}
\item[\rm (i)]
The transition matrix from an $(A,B)$-basis to an inverted $(A,B)$-basis is a nonzero scalar multiple
of $Z$.
\item[\rm (ii)]
The transition matrix from an $(A,B)$-basis to an inverted $(B,A)$-basis is a nonzero scalar multiple
of $D$.
\item[\rm (iii)]
The transition matrix from an $(A,B)$-basis to a $(B,A)$-basis is a nonzero scalar multiple 
of $DZ$.
\end{itemize}
\end{lemma}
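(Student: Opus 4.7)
The plan is to fix a reference $(A,B)$-basis $\{v_i\}_{i=0}^d$ of $V$ and to express each of the three target bases as explicit scalar multiples of the $v_i$'s, possibly with index reversal. Since each subspace $V_j$ of the $(A,B)$-decomposition is one-dimensional, every basis vector of each target type is forced to lie on a single line, so only one scalar per coordinate is free, and the raising/lowering axioms that define the target basis type pin these scalars down up to a single global nonzero factor. The only input from earlier material that I need is the formula $Bv_i = \vphi_{i+1}v_{i+1}$ (with $v_{d+1}=0$) coming from Lemma \ref{lem:entriesABC}, which records how $B$ acts on the reference basis in terms of the parameter sequence $\{\vphi_i\}_{i=1}^d$.

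For (i), I would unpack the definition of an inverted $(A,B)$-basis $\{w_i\}_{i=0}^d$: the condition that $\{w_{d-i}\}_{i=0}^d$ is an $(A,B)$-basis translates to $w_i \in V_{d-i}$ together with $Aw_i = w_{i+1}$ for $0 \leq i \leq d-1$ and $Aw_d = 0$. Writing $w_i = c_iv_{d-i}$ and applying $A$, the lowering relation $Av_{d-i}=v_{d-i-1}$ forces $c_i = c_{i+1}$, so all $c_i$ equal a common nonzero scalar $c$, and the transition matrix from $\{v_i\}$ to $\{w_i\}$ has $(i,j)$-entry $c\delta_{i+j,d}$, which is $cZ$.

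Part (ii) follows the same pattern applied to an inverted $(B,A)$-basis $\{y_i\}_{i=0}^d$. The defining condition unpacks to $y_i \in V_i$ together with $By_i = y_{i+1}$, since $B$ now plays the role of the raising operator after one reversal of the $(B,A)$-decomposition $\{V_{d-i}\}_{i=0}^d$. Writing $y_i = c_iv_i$ and substituting $Bv_i = \vphi_{i+1}v_{i+1}$ gives the recursion $c_{i+1} = c_i\vphi_{i+1}$, so $c_i = c_0\vphi_1\cdots\vphi_i$, which is precisely the diagonal of $c_0D$.

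Part (iii) I would do by the same direct computation, rather than by composing (i) and (ii): a $(B,A)$-basis $\{u_i\}_{i=0}^d$ satisfies $u_i \in V_{d-i}$ and $Bu_i = u_{i-1}$, and with $u_i = c_iv_{d-i}$ together with $Bv_{d-i} = \vphi_{d-i+1}v_{d-i+1}$ I obtain the recursion $c_{i-1} = c_i\vphi_{d-i+1}$, which solves to $c_i = c_d\vphi_1\cdots\vphi_{d-i}$; reading off the transition matrix then gives a $c_d$ multiple of $DZ$. There is no genuine obstacle in the argument; the only place where care is needed is in translating the two "inverted basis" definitions into the correct index-reversed raising/lowering conditions, since it is easy to produce an off-by-one slip in those conversions.
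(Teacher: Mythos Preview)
Your argument is correct. The paper does not actually supply a proof of this lemma; it simply cites \cite[Lemma~3.48]{T:LRT}, so there is nothing to compare against beyond confirming that your direct computation works, which it does. One minor remark: you invoke Lemma~\ref{lem:entriesABC} to justify $Bv_i=\vphi_{i+1}v_{i+1}$, but this identity already follows directly from the definition of the parameter sequence (since $BA v_i=\vphi_i v_i$ and $Av_i=v_{i-1}$ give $Bv_{i-1}=\vphi_i v_i$), so you could avoid appealing to a result stated for LR triples when only the LR pair $A,B$ is in play.
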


\begin{lemma}    \label{lem:transAB}          \samepage
\ifDRAFT {\rm lem:transAB}. \fi
In the table below, the transition matrix from the basis in the first column to the basis in the second column
is a nonzero scalar multiple of the matrix in the third column:
\[
 \begin{array}{c|c|c}
\text{\rm from} & \text{\rm to} & \text{\rm transition matrix}
\\ 
\hline
 (A,B) & (A,B) & I  \rule{0mm}{2.5ex}
\\
 (A,B) & \text{\rm inv.$(A,B)$} & Z
\\
 (A,B) & (B,A) & DZ
\\
 (A,B) & \text{\rm inv.$(B,A)$} & D
\\ \hline
 (A,C) & (A,B) & T'    \rule{0mm}{2.5ex}
\\
 (A,C) & \text{\rm inv.$(A,B)$} &T' Z
\\
 (A,C) & (B,A) & T' D Z
\\
 (A,C) & \text{\rm inv.$(B,A)$} &T'D
\\ \hline
 (B,C) & (A,B) & (T'')^{-1} Z D^{-1}    \rule{0mm}{2.5ex}
\\
 (B,C) & \text{\rm inv.$(A,B)$} & (T'')^{-1} Z D^{-1} Z
\\
 (B,C) & (B,A) & (T'')^{-1}
\\
 (B,C) & \text{\rm inv.$(B,A)$} & (T'')^{-1} Z
\end{array}
\]
\end{lemma}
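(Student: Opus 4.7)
The plan is to derive the twelve entries of the table as three blocks of four rows, indexed by the source basis. The common theme is that each block contains one ``anchor'' entry coming straight from a definition or a direct part of Lemma \ref{lem:LRPtransition}, from which the remaining three entries are obtained by composing the anchor with entries of the top block, using the convention that the transition matrix from $\mathcal{B}_1$ to $\mathcal{B}_3$ factors as $P_{12} P_{23}$. For the top block (source $(A,B)$), the first entry is trivially $I$, and the other three, $Z$, $DZ$, $D$, are exactly parts (i), (iii), (ii) of Lemma \ref{lem:LRPtransition}.

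For the middle block (source $(A,C)$), Definition \ref{def:T} gives the anchor $(A,C) \to (A,B) = T'$ (up to a nonzero scalar), and composing on the right with the top-block transitions immediately yields $T' Z$, $T' D Z$, $T' D$ for the targets inv.\,$(A,B)$, $(B,A)$, inv.\,$(B,A)$ respectively, matching the table.

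For the bottom block (source $(B,C)$), Definition \ref{def:T} gives $T''$ as the transition from a $(B,A)$-basis to a $(B,C)$-basis, so the anchor is $(B,C) \to (B,A) = (T'')^{-1}$. To reach the remaining three targets I compose with the transitions emanating from $(B,A)$: $(B,A) \to (A,B) = (DZ)^{-1} = Z D^{-1}$, obtained by inverting Lemma \ref{lem:LRPtransition}(iii) and using $Z^{-1} = Z$; $(B,A) \to \text{inv.}(B,A)$ is a nonzero scalar multiple of $Z$ by Lemma \ref{lem:LRPtransition}(i) applied to the LR pair $B,A$ (which is LR on the inverted decomposition); and $(B,A) \to \text{inv.}(A,B)$ arises as the composite $Z D^{-1} \cdot Z$. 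These combine with the anchor to give $(T'')^{-1} Z D^{-1}$, $(T'')^{-1} Z D^{-1} Z$, and $(T'')^{-1} Z$, matching the last three entries of the table. The only bookkeeping point throughout is that each transition is determined only up to a nonzero scalar (a ``type'' of basis does not pin down a particular basis), so the intermediate equalities are read modulo nonzero scalars; no step presents a real obstacle.
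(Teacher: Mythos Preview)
Your proof is correct and follows essentially the same approach as the paper: establish the top block from Lemma~\ref{lem:LRPtransition}, anchor the middle block at $T'$ via Definition~\ref{def:T} and compose on the right with the top block, then anchor the bottom block at $(T'')^{-1}$ and compose with transitions out of $(B,A)$. The only cosmetic difference is that the paper routes all remaining bottom-block entries through $(B,C)\to(A,B)=(T'')^{-1}ZD^{-1}$ and then reuses the top block, whereas you compose directly from $(B,A)$; both routings are equivalent.
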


\begin{proof}
The transition matrices from an $(A,B)$-basis are given in Lemma \ref{lem:LRPtransition}.
By Definition \ref{def:T} the transition matrix from an $(A,C)$-basis to an $(A,B)$-basis is 
a nonzero scalar multiple of $T'$.
The remaining transition matrices from an $(A,C)$-basis are obtained by multiplying $T'$
on the right by the transition matrices from an $(A,B)$-basis.
By Definition \ref{def:T} the transition matrix from an $(B,A)$-basis to a $(B,C)$-basis is 
a nonzero scalar multiple of $T''$,
so the transition matrix from a $(B,C)$-basis to a $(B,A)$-basis is a nonzero scalar multiple of $(T'')^{-1}$.
The transition matrix from an $(A,B)$-basis to a $(B,A)$-basis is a nonzero scalar multiple of $DZ$, 
so the transition
matrix from a $(B,A)$-basis to an $(A,B)$-basis is a nonzero scalar multiple of $(DZ)^{-1}$.
By these comments, the transition matrix from a $(B,C)$-basis to an $(A,B)$-basis is
a nonzero scalar multiple of
$(T'')^{-1} (DZ)^{-1} = (T'')^{-1} Z D^{-1}$.
The remaining transition matrices from a $(B,C)$-basis are obtained by multiplying
$(T'')^{-1} Z D^{-1}$ on the right by the transition matrices from an $(A,B)$-basis.
\end{proof}

Applying Lemma \ref{lem:transAB} to the LR triple $B,C,A$ we obtain:

\begin{lemma}    \label{lem:transBC}          \samepage
\ifDRAFT {\rm lem:transBC}. \fi
In the table below, the transition matrix from the basis in the first column to the basis in the second column
is a nonzero scalar multiple of the matrix in the third column:
\[
 \begin{array}{c|c|c}
\text{\rm from} & \text{\rm to} & \text{\rm transition matrix}
\\ 
\hline
 (B,C) & (B,C) & I  \rule{0mm}{2.5ex}
\\
 (B,C) & \text{\rm inv.$(B,C)$} & Z
\\
 (B,C) & (C,B) & D' Z
\\
 (B,C) & \text{\rm inv.$(C,B)$} & D'
\\ \hline
 (B,A) & (B,C) & T''    \rule{0mm}{2.5ex}
\\
 (B,A) & \text{\rm inv.$(B,C)$} &T'' Z
\\
 (B,A) & (C,B) & T'' D' Z
\\
 (B,A) & \text{\rm inv.$(C,B)$} &T'' D'
\\ \hline
 (C,A) & (B,C) & T^{-1} Z (D')^{-1}    \rule{0mm}{2.5ex}
\\
 (C,A) & \text{\rm inv.$(B,C)$} & T^{-1} Z (D')^{-1} Z
\\
 (C,A) & (C,B) & T^{-1}
\\
 (C,A) & \text{\rm inv.$(C,B)$} & T^{-1} Z
\end{array}
\]
\end{lemma}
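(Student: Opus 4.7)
The plan is to derive Lemma \ref{lem:transBC} as an immediate corollary of Lemma \ref{lem:transAB} by applying the latter to the LR triple $B,C,A$ in place of $A,B,C$ and then translating back to the original notation using the primed-object convention of Definition \ref{def:prime}. Under that convention, an object $f$ attached to the triple $B,C,A$ is, by definition, the object $f'$ attached to $A,B,C$. Since applying the prime three times cycles the triple $A,B,C \to B,C,A \to C,A,B \to A,B,C$ back to itself, we have the identifications that $T$, $T'$, $T''$, $D$ \emph{for} $B,C,A$ equal $T'$, $T''$, $T$, $D'$ \emph{for} $A,B,C$ respectively.

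First I would take the three row-blocks in Lemma \ref{lem:transAB} and perform the cyclic substitution $A\mapsto B$, $B\mapsto C$, $C\mapsto A$ on the basis labels: the $(A,B)$-block becomes the $(B,C)$-block, the $(A,C)$-block becomes the $(B,A)$-block, and the $(B,C)$-block becomes the $(C,A)$-block. Within each block the labels $(A,B)$, inv.$(A,B)$, $(B,A)$, inv.$(B,A)$ become $(B,C)$, inv.$(B,C)$, $(C,B)$, inv.$(C,B)$. Next I would substitute into the transition matrices: the anti-diagonal $Z$ is independent of the triple and stays fixed, while $T\mapsto T'$, $T'\mapsto T''$, $T''\mapsto T$, and $D\mapsto D'$. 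Matching each entry produces precisely the table displayed in Lemma \ref{lem:transBC}; for instance $(T'')^{-1}ZD^{-1}$ in the $(B,C)\to(A,B)$ slot becomes $T^{-1}Z(D')^{-1}$ in the $(C,A)\to(B,C)$ slot, as claimed.

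There is no real obstacle to this argument beyond careful bookkeeping; the only point where attention is required is the identification of $T''$ for the triple $B,C,A$ with $T$ for the triple $A,B,C$, which is where the three-fold cyclic nature of the prime convention is used. Once this is noted, the lemma follows with no additional calculation.
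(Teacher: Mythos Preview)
Your proposal is correct and follows exactly the approach indicated in the paper: the paper simply states that Lemma~\ref{lem:transBC} is obtained by applying Lemma~\ref{lem:transAB} to the LR triple $B,C,A$, which is precisely the cyclic substitution you carry out. Your careful bookkeeping of how $T$, $T'$, $T''$, $D$ for $B,C,A$ correspond to $T'$, $T''$, $T$, $D'$ for $A,B,C$ via Definition~\ref{def:prime} is the content that the paper leaves implicit.
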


Applying Lemma \ref{lem:transAB} to the LR triple $C,A,B$ we obtain:

\begin{lemma}    \label{lem:transCA}          \samepage
\ifDRAFT {\rm lem:transCA}. \fi
In the table below, the transition matrix from the basis in the first column to the basis in the second column
is a nonzero scalar multiple of the matrix in the third column:
\[
 \begin{array}{c|c|c}
\text{\rm from} & \text{\rm to} & \text{\rm transition matrix}
\\ 
\hline
 (C,A) & (C,A) & I  \rule{0mm}{2.5ex}
\\
 (C,A) & \text{\rm inv.$(C,A)$} & Z
\\
 (C,A) & (A,C) & D'' Z
\\
 (C,A) & \text{\rm inv.$(A,C)$} & D''
\\ \hline
 (C,B) & (C,A) & T    \rule{0mm}{2.5ex}
\\
 (C,B) & \text{\rm inv.$(C,A)$} &T Z
\\
 (C,B) & (A,C) & T D'' Z
\\
 (C,B) & \text{\rm inv.$(A,C)$} &T D''
\\ \hline
 (A,B) & (C,A) & (T')^{-1} Z (D'')^{-1}    \rule{0mm}{2.5ex}
\\
 (A,B) & \text{\rm inv.$(C,A)$} & (T')^{-1} Z (D'')^{-1} Z
\\
 (A,B) & (A,C) & (T')^{-1}
\\
 (A,B) & \text{\rm inv.$(A,C)$} & (T')^{-1} Z
\end{array}
\]
\end{lemma}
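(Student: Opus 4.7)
The plan is to derive Lemma \ref{lem:transCA} by invoking Lemma \ref{lem:transAB}, applied to the LR triple $C,A,B$ instead of $A,B,C$, and then translating the conclusion back to the original notation.

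First I would set up the correspondence. By Definition \ref{def:prime}, the double-primed objects for $A,B,C$ coincide with the unprimed objects for $C,A,B$. Consequently, Lemma \ref{lem:transAB}, stated for an LR triple with the roles of $A,B,C$ played by $C,A,B$, governs exactly the bases and transition matrices appearing in the table of Lemma \ref{lem:transCA}. Concretely, an $(A,B)$-basis for $C,A,B$ is a $(C,A)$-basis for $A,B,C$; an $(A,C)$-basis for $C,A,B$ is a $(C,B)$-basis for $A,B,C$; a $(B,C)$-basis for $C,A,B$ is an $(A,B)$-basis for $A,B,C$; and a $(B,A)$-basis for $C,A,B$ is an $(A,C)$-basis for $A,B,C$. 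The matrices also translate: the matrix $D$ in Lemma \ref{lem:transAB}, built from the parameter sequence of the first LR pair of the triple, becomes $D''$ here (built from $\{\vphi''_i\}_{i=1}^d$, the parameter sequence for $C,A$). For the Toeplitz transition matrices, the matrix $T'$ of Definition \ref{def:T} (which converts an $(A,C)$-basis to a compatible $(A,B)$-basis) becomes, when applied to $C,A,B$, the transition matrix from a $(C,B)$-basis to a compatible $(C,A)$-basis, i.e.\ our $T$; and similarly $T''$ for $C,A,B$ is the transition from an $(A,C)$-basis to a compatible $(A,B)$-basis, i.e.\ our $T'$.

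Next I would substitute these identifications into each of the twelve rows of Lemma \ref{lem:transAB}. For example, the row stating that the $(A,B)\to(B,A)$ transition is a nonzero scalar multiple of $DZ$ becomes the row stating that the $(C,A)\to(A,C)$ transition is a nonzero scalar multiple of $D''Z$, matching the fourth entry of the top block of Lemma \ref{lem:transCA}. The row giving $(A,C)\to(B,A)$ as $T'DZ$ becomes $(C,B)\to(A,C)$ as $TD''Z$, matching the corresponding entry of the middle block. The row giving $(B,C)\to(A,B)$ as $(T'')^{-1}ZD^{-1}$ becomes $(A,B)\to(C,A)$ as $(T')^{-1}Z(D'')^{-1}$, matching the bottom block. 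Each of the other nine rows is obtained by the same mechanical substitution.

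Since no new content is required beyond the relabeling already justified by Definition \ref{def:prime} and the definitions of $T,T',T'',D,D',D''$, there is no serious obstacle. The only point demanding care is maintaining a consistent dictionary between the two labelings, particularly for the Toeplitz matrices $T,T',T''$, which are permuted cyclically rather than primed; once the dictionary is fixed, the table of Lemma \ref{lem:transCA} drops out row by row from that of Lemma \ref{lem:transAB}.
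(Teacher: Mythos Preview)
Your proposal is correct and follows exactly the paper's approach: the paper simply states that Lemma \ref{lem:transCA} is obtained by applying Lemma \ref{lem:transAB} to the LR triple $C,A,B$. Your detailed dictionary spelling out the cyclic relabeling (via Definition \ref{def:prime}) is precisely what underlies that one-line justification; the only slip is a harmless misnumbering (the $(C,A)\to(A,C)$ row is the third, not the fourth, entry of the top block).
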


\section{Representing the idempotents with respect to the 12 bases}
\label{sec:idempotents}

In this section, we obtain the matrices that represent the idempotents
with respect to the 12 bases \eqref{eq:12bases}.
We begin by recalling a lemma from elementary linear algebra:

\begin{lemma}   \label{lem:trans}   \samepage
\ifDRAFT {\rm lem:trans}. \fi
Let $H \in \text{\rm End}(V)$, and
$\{u_i\}_{i=0}^d$, $\{v_i\}_{i=0}^d$ be bases for $V$.
Let $M$ be the matrix representing $H$ with respect to $\{u_i\}_{i=0}^d$,
and let  $S$ be the transition matrix from $\{u_i\}_{i=0}^d$ to $\{v_i\}_{i=0}^d$.
Then the matrix representing $H$ with respect to $\{v_i\}_{i=0}^d$ is
$S^{-1} M S$.
\end{lemma}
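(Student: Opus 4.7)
The plan is to treat this as a bookkeeping exercise: everything rests on pinning down the conventions used elsewhere in the paper for the matrix representing a linear map and for the transition matrix between two bases, and then chasing the definitions.

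First I would fix notation. Writing $\{u_i\}_{i=0}^d$ and $\{v_i\}_{i=0}^d$ as column tuples, the transition matrix $S$ from $\{u_i\}$ to $\{v_i\}$ is by definition the matrix with $(i,j)$-entry $S_{ij}$ satisfying
\[
  v_j = \sum_{i=0}^{d} S_{ij}\, u_i \qquad (0 \leq j \leq d),
\]
and the matrix $M$ representing $H$ with respect to $\{u_i\}$ is defined by
\[
  H u_j = \sum_{i=0}^{d} M_{ij}\, u_i \qquad (0 \leq j \leq d).
\]
The goal is to show that the matrix $N$ representing $H$ with respect to $\{v_i\}$ satisfies $N = S^{-1} M S$, equivalently $SN = MS$.

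Next I would carry out the calculation. Apply $H$ to $v_j$, expand $v_j$ in $\{u_i\}$ via $S$, apply $M$, and then re-expand in $\{v_i\}$ via $S^{-1}$. Concretely, using linearity of $H$,
\[
  H v_j = \sum_{k=0}^{d} S_{kj}\, H u_k = \sum_{k=0}^{d} S_{kj} \sum_{i=0}^{d} M_{ik}\, u_i = \sum_{i=0}^{d} (MS)_{ij}\, u_i.
\]
On the other hand, using $u_i = \sum_{\ell} (S^{-1})_{\ell i}\, v_\ell$ and collecting coefficients of each $v_\ell$, one finds
\[
  H v_j = \sum_{\ell=0}^{d} (S^{-1} M S)_{\ell j}\, v_\ell,
\]
which is exactly the statement that $N = S^{-1} M S$.

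This is essentially the only argument available and there is no real obstacle; the only thing to watch out for is that the paper's convention for the transition matrix matches the one above (so that composition lines up on the correct side), which is easy to verify by comparing with the preceding definitions and with Lemmas \ref{lem:transAB}--\ref{lem:transCA}. With those conventions locked in, the proof is a one-line change-of-basis computation.
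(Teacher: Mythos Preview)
Your argument is correct and is the standard change-of-basis computation. Note, however, that the paper does not actually supply a proof of this lemma: it is introduced with the phrase ``We begin by recalling a lemma from elementary linear algebra'' and is stated without proof. So there is nothing to compare against beyond observing that your write-up simply fills in the routine verification the paper omits.
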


We use the following notation:

\begin{definition}   \label{def:Fr}   \samepage
\ifDRAFT {\rm def:Fr}. \fi
For $0 \leq r \leq d$ let $F_r$ denote the matrix in $\text{Mat}_{d+1}(\F)$
that has $(r,r)$-entry $1$ and all other entries $0$.
\end{definition}

Let $A,B,C$ be an LR triple on $V$ with parameter array \eqref{eq:parray},
Toeplitz data \eqref{eq:Toeplitzdata},
and idempotent data \eqref{eq:idempotentdata}.
The following three propositions are routinely obtained 
from Lemmas \ref{lem:transAB}--\ref{lem:transCA} and \ref{lem:trans}.

\begin{proposition}   \label{prop:Er}    \samepage
\ifDRAFT {\rm prop:Er}. \fi
For $0 \leq r \leq d$,
with respect to the basis in the first column,  $E_r$ is represented by the matrix
in the second column:
\[
\begin{array}{c|c}
\text{\rm basis} & \text{\rm the matrix representing $E_r$}
\\ \hline
(A,B) & F_r      \rule{0mm}{2.5ex}
\\
\text{\rm inv.$(A,B)$} & F_{d-r}
\\
(B,A) &  F_{d-r}
\\
\text{\rm inv.$(B,A)$} &  F_r
\\ \hline
(B,C) & (T'')^{-1} F_{d-r} T''      \rule{0mm}{2.5ex}
\\
\text{\rm inv.$(B,C)$} & Z (T'')^{-1} F_{d-r} T'' Z
\\
(C,B) & Z (D')^{-1} (T'')^{-1} F_{d-r} T'' D' Z
\\
\text{\rm inv.$(C,B)$} & (D')^{-1} (T'')^{-1} F_{d-r} T'' D'
\\ \hline
(C,A) & D'' Z T' F_r (T')^{-1} Z (D'')^{-1}      \rule{0mm}{2.5ex}
\\
\text{\rm inv.$(C,A)$} & Z D'' Z T' F_r (T')^{-1} Z (D'')^{-1} Z
\\
(A,C) & T' F_r (T')^{-1}
\\
\text{\rm inv.$(A,C)$} & Z T' F_r (T')^{-1} Z
\end{array}
\]
\end{proposition}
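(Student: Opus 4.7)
The proof is a direct application of Lemma~\ref{lem:trans} together with the transition-matrix catalogues in Lemmas~\ref{lem:transAB}--\ref{lem:transCA}. My plan proceeds in three stages.

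First, I handle the four bases built directly from the $(A,B)$-decomposition $\{V_i\}_{i=0}^d$. By the definition of $E_r$, we have $E_r v = v$ for $v \in V_r$ and $E_r v = 0$ for $v \in V_j$ with $j \neq r$. An $(A,B)$-basis $\{v_i\}_{i=0}^d$ satisfies $v_i \in V_i$, so $E_r$ is represented by $F_r$ in this basis. An inverted $(A,B)$-basis satisfies $v_i \in V_{d-i}$, and the $(B,A)$- and inverted $(B,A)$-basis vectors (which live in the $(B,A)$-decomposition $\{V_{d-i}\}_{i=0}^d$) likewise lie in $V_{d-i}$ or $V_i$; accordingly each row in the first block of the table is immediate from Definition~\ref{def:Fr}.

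Second, I fill in the middle block (the four $(B,C),(C,B)$-type bases) by taking the $(B,A)$-basis as reference, where $E_r$ is represented by $F_{d-r}$. By Lemma~\ref{lem:transBC} the transition matrix from $(B,A)$ to $(B,C)$ is a nonzero scalar multiple of $T''$; by Lemma~\ref{lem:trans} the matrix representing $E_r$ in the $(B,C)$-basis is $(T'')^{-1} F_{d-r} T''$. The scalar ambiguity in the transition matrix is harmless because the conjugation $S^{-1} M S$ is invariant under scaling $S$. Chaining this with the remaining transitions $(B,C) \to \text{inv.}(B,C)$, $(B,C)\to (C,B)$, $(B,C)\to \text{inv.}(C,B)$ from Lemma~\ref{lem:transBC} yields the remaining three rows; one uses $Z^{-1} = Z$ from Definition~\ref{def:matrixD} to simplify.

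Third, I fill in the bottom block (the four $(C,A),(A,C)$-type bases) the same way, using the $(A,B)$-basis as reference with $E_r = F_r$. By Lemma~\ref{lem:transCA} the transition matrix from $(A,B)$ to $(A,C)$ is a nonzero scalar multiple of $(T')^{-1}$, so by Lemma~\ref{lem:trans} the matrix representing $E_r$ in the $(A,C)$-basis is $T' F_r (T')^{-1}$. For the $(C,A)$-basis we use that the transition matrix from $(A,B)$ to $(C,A)$ is a nonzero scalar multiple of $(T')^{-1} Z (D'')^{-1}$; inverting (and using $Z^{-1}=Z$) gives $D'' Z T'$, and then Lemma~\ref{lem:trans} produces $D'' Z T' F_r (T')^{-1} Z (D'')^{-1}$. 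The two inverted entries are obtained by conjugating once more by $Z$.

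The work is entirely bookkeeping: no step involves genuine difficulty. The only point that requires care is to observe that each transition matrix in Lemmas~\ref{lem:transAB}--\ref{lem:transCA} is determined only up to a nonzero scalar, but that the formula $S^{-1}MS$ is invariant under such rescaling, so the matrices in the table are well defined.
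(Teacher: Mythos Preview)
Your proposal is correct and follows exactly the approach the paper indicates: the paper simply states that Propositions~\ref{prop:Er}--\ref{prop:Eddr} are ``routinely obtained from Lemmas~\ref{lem:transAB}--\ref{lem:transCA} and~\ref{lem:trans},'' and your three-stage computation is precisely that routine. Your remark that the scalar ambiguity in the transition matrices disappears under conjugation is the only point worth making explicit, and you have made it.
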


\begin{proposition}   \label{prop:Edr}   \samepage
\ifDRAFT {\rm prop:Edr}. \fi
For $0 \leq r \leq d$,
with respect to the basis in the first column,  $E'_r$ is represented by the matrix
in the second column:
\[
\begin{array}{c|c}
\text{\rm basis} & \text{\rm the matrix representing $E'_r$}
\\ \hline
(A,B) & D Z T'' F_r (T'')^{-1} Z D^{-1}      \rule{0mm}{2.5ex}
\\
\text{\rm inv.$(A,B)$} & Z D Z T'' F_r (T'')^{-1} Z D^{-1} Z
\\
(B,A) & T'' F_r (T'')^{-1}
\\
\text{\rm inv.$(B,A)$} & Z T'' F_r (T'')^{-1} Z
\\ \hline
(B,C) & F_r     \rule{0mm}{2.5ex}
\\
\text{\rm inv.$(B,C)$} & F_{d-r}
\\
(C,B) & F_{d-r}
\\
\text{\rm inv.$(C,B)$} & F_r
\\ \hline
(C,A) & T^{-1} F_{d-r} T     \rule{0mm}{2.5ex}
\\
\text{\rm inv.$(C,A)$} & Z T^{-1} F_{d-r} T Z
\\
(A,C) &  Z (D'')^{-1} T^{-1} F_{d-r} T D'' Z
\\
\text{\rm inv.$(A,C)$} &(D'')^{-1} T^{-1} F_{d-r} T D''
\end{array}
\]
\end{proposition}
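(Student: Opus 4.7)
My plan is to start from the simplest case and conjugate outward using Lemma \ref{lem:trans}. By Definition \ref{def:Fr} and the definition of $E'_r$, the idempotent $E'_r$ acts as the identity on $V'_r$ and as zero on $V'_j$ for $j \neq r$, where $\{V'_i\}_{i=0}^d$ is the $(B,C)$-decomposition. Hence if $\{v_i\}_{i=0}^d$ is a $(B,C)$-basis (so $v_i \in V'_i$), then $E'_r v_i = \delta_{i,r} v_i$, so the matrix of $E'_r$ with respect to this basis is $F_r$. This disposes of the $(B,C)$ row in the middle block. The other three entries in the middle block follow from the same observation applied to inverted $(B,C)$-bases, $(C,B)$-bases, and inverted $(C,B)$-bases, combined with Lemma \ref{lem:transBC}.

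For the remaining nine rows I would use Lemma \ref{lem:trans}: if $S$ is the transition matrix from a $(B,C)$-basis to the target basis, then the matrix of $E'_r$ in the target basis is $S^{-1} F_r S$. The matrix $S$ is given up to a nonzero scalar by Lemmas \ref{lem:transAB}--\ref{lem:transCA}, and since $F_r$ commutes with scalars, any scalar ambiguity cancels in the conjugation. For example, the transition matrix from a $(B,C)$-basis to an $(A,B)$-basis is a scalar multiple of $(T'')^{-1} Z D^{-1}$ by Lemma \ref{lem:transAB}, whose inverse is a scalar multiple of $D Z T''$ (using $Z^{-1}=Z$), and conjugating $F_r$ gives $D Z T'' F_r (T'')^{-1} Z D^{-1}$, matching the table. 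I would then record the analogous calculation for each of the remaining eleven bases, drawing the transition matrices from the three tables in Lemmas \ref{lem:transAB}, \ref{lem:transBC}, \ref{lem:transCA} exactly as was done in the statement and verification of Proposition \ref{prop:Er}.

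There is no substantive obstacle; the content is a bookkeeping exercise tracking twelve conjugations $S^{-1} F_r S$. The only care needed is to keep the direction of the transition matrix straight (i.e.\ to use $S$ \emph{from} the $(B,C)$-basis \emph{to} the target basis, not the reverse) and to invert $D$, $D'$, $D''$ correctly when the starting table in Lemma \ref{lem:transBC} or \ref{lem:transCA} has the target as source. Apart from this uniformity check across rows, the entries fall out immediately upon expanding each $S^{-1} F_r S$ and no further simplification is required, since the statement of the proposition is given in exactly this conjugated form.
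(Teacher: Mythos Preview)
Your approach is correct and is exactly what the paper intends: the paper states only that Propositions \ref{prop:Er}--\ref{prop:Eddr} are ``routinely obtained from Lemmas \ref{lem:transAB}--\ref{lem:transCA} and \ref{lem:trans}'', and your proposal spells out precisely this routine, starting from the $(B,C)$-basis where $E'_r$ is represented by $F_r$ and conjugating by the tabulated transition matrices. Apart from a harmless miscount of the number of remaining rows, there is nothing to add.
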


\begin{proposition}   \label{prop:Eddr}   \samepage
\ifDRAFT {\rm prop:Eddr}. \fi
For $0 \leq r \leq d$,
with respect to the basis in the first column,  $E''_r$ is represented by the matrix
in the second column:
\[
\begin{array}{c|c}
\text{\rm basis} & \text{\rm the matrix representing $E''_r$}
\\ \hline
(A,B) & (T')^{-1} F_{d-r} T'      \rule{0mm}{2.5ex}
\\
\text{\rm inv.$(A,B)$} & Z (T')^{-1} F_{d-r} T' Z
\\
(B,A) &  Z D^{-1} (T')^{-1} F_{d-r} T' D Z
\\
\text{\rm inv.$(B,A)$} & D^{-1} (T')^{-1} F_{d-r} T' D
\\ \hline
(B,C) & D' Z T F_r T^{-1} Z (D')^{-1}    \rule{0mm}{2.5ex}
\\
\text{\rm inv.$(B,C)$} & Z D' Z T F_r T^{-1} Z (D')^{-1} Z
\\
(C,B) &  T F_r T^{-1}
\\
\text{\rm inv.$(C,B)$} & Z T F_r T^{-1} Z
\\ \hline
(C,A) & F_r   \rule{0mm}{2.5ex}
\\
\text{\rm inv.$(C,A)$} & F_{d-r}
\\
(A,C) &  F_{d-r}
\\
\text{\rm inv.$(A,C)$} & F_r
\end{array}
\]
\end{proposition}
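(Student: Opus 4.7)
The plan is to exploit the fact that $E''_r$ has a particularly simple matrix representation in a $(C,A)$-basis: since $\{E''_i\}_{i=0}^d$ is the idempotent sequence attached to the $(C,A)$-decomposition of $V$, the map $E''_r$ fixes the $r$-th basis vector of any $(C,A)$-basis and annihilates the others. Hence $E''_r$ is represented by $F_r$ with respect to any $(C,A)$-basis, which gives the first of the twelve entries. The remaining eleven entries will be obtained by applying Lemma \ref{lem:trans}: if $S$ is the transition matrix from a $(C,A)$-basis to one of the other bases in \eqref{eq:12bases}, then $E''_r$ is represented by $S^{-1} F_r S$ with respect to that basis.

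I would read off the requisite transition matrices (up to a nonzero scalar multiple) from Lemmas \ref{lem:transAB}, \ref{lem:transBC}, and \ref{lem:transCA}. Any nonzero scalar cancels during the conjugation $S^{-1} F_r S$, so it suffices to use the explicit matrix given in those tables without tracking the scalar. To streamline the calculations, I would first record two elementary identities: $Z F_r Z = F_{d-r}$, because $Z$ is the matrix of the order-reversing permutation; and $D F_r D^{-1} = F_r$ (and similarly with $D'$ or $D''$ in place of $D$), because each of these is a diagonal matrix and $F_r$ is diagonal.

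These two identities collapse the conjugations to short expressions. For example, to obtain the entry for an $(A,C)$-basis one takes $S = D'' Z$ from Lemma \ref{lem:transCA}, so $S^{-1} F_r S = Z (D'')^{-1} F_r D'' Z = Z F_r Z = F_{d-r}$, matching the table. For an $(A,B)$-basis one takes $S = D'' Z T'$ (the inverse of the transition matrix from $(A,B)$ to $(C,A)$ given in Lemma \ref{lem:transCA}), yielding $S^{-1} F_r S = (T')^{-1} Z (D'')^{-1} F_r D'' Z T' = (T')^{-1} F_{d-r} T'$, again matching. The remaining nine cases proceed in exactly the same way, with $D$ or $D'$ in place of $D''$ and with $T$ or $T''$ in place of $T'$, inserting a leading or trailing $Z$ to pass between a basis and its inversion.

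The main obstacle is really no more than bookkeeping: keeping the order of factors correct in the conjugation, and using the appropriate scalar-free transition matrix from Lemmas \ref{lem:transAB}--\ref{lem:transCA} in each of the twelve cases. There is no conceptual difficulty here, which is why the author describes the three propositions of this section as routinely obtained from the earlier material.
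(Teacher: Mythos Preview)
Your proposal is correct and follows exactly the approach the paper indicates: the paper simply states that Propositions~\ref{prop:Er}--\ref{prop:Eddr} are ``routinely obtained from Lemmas~\ref{lem:transAB}--\ref{lem:transCA} and~\ref{lem:trans},'' and you have spelled out precisely that routine (starting from the $(C,A)$-basis where $E''_r$ is represented by $F_r$, then conjugating by the transition matrices, with the scalar ambiguity cancelling). Your auxiliary identities $ZF_rZ=F_{d-r}$ and $DF_rD^{-1}=F_r$ are exactly what is needed to reduce the twelve conjugations to the displayed forms.
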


The following three proposition are obtained by computing the entries 
of the matrices given in Propositions \ref{prop:Er}--\ref{prop:Eddr}.

\begin{proposition}   \label{prop:Erij}    \samepage
\ifDRAFT {\rm prop:Erij}. \fi
For $0 \leq r,i,j \leq d$,
with respect to the basis in the first column,  the matrix representing $E_r$ 
has $(i,j)$-entry in the second column when the condition in the third column is satisfied,
and $0$ otherwise:
\[
\begin{array}{c|c|c}
\text{\rm basis} & \text{\rm $(i,j)$-entry of the matrix representing $E_r$} & \text{\rm condition}
\\ \hline
(A,B) & 1 & i=r=j     \rule{0mm}{2.5ex}
\\
\text{\rm inv.$(A,B)$} & 1 & i=d-r=j 
\\
(B,A) & 1 & i=d-r=j
\\
\text{\rm inv.$(B,A)$} & 1 & i=r=j
\\ \hline
(B,C) & \alpha''_{r-d+j} \beta''_{d-r-i} & i \leq d-r \leq j      \rule{0mm}{2.5ex}
\\
\text{\rm inv.$(B,C)$} & \alpha''_{r-j} \beta''_{i-r} & j \leq r \leq i
\\
(C,B) & \alpha''_{r-j} \beta''_{i-r} \vphi'_{d-i+1} \cdots \vphi_{d-j} & j \leq r \leq i
\\
\text{\rm inv.$(C,B)$} & \alpha''_{r-d+j} \beta''_{d-r-i} \vphi'_{i+1} \cdots \vphi'_j & i \leq d-r \leq j
\\ \hline
(C,A) & \alpha'_{r-d+i} \beta'_{d-r-j} \vphi''_{j+1} \cdots \vphi''_i & j \leq d-r \leq i     \rule{0mm}{2.5ex}
\\
\text{\rm inv.$(C,A)$} & \alpha'_{r-i} \beta'_{j-r} \vphi''_{d-j+1} \cdots \vphi''_{d-i} & i \leq r \leq j
\\
(A,C) & \alpha'_{r-i} \beta'_{j-r} & i \leq r \leq j
\\
\text{\rm inv.$(A,C)$} & \alpha'_{r-d+i} \beta'_{d-r-j}  & j \leq d-r \leq i
\end{array}
\]
\end{proposition}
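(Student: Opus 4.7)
The plan is to deduce Proposition \ref{prop:Erij} as a direct entrywise computation from the matrix expressions already produced in Proposition \ref{prop:Er}. Each of the twelve matrices listed there is a product of ingredients of a very rigid form: the rank-one matrix $F_r$ (nonzero only at the $(r,r)$ position), the involution $Z$ (which simply reverses row/column indices), the diagonal matrices $D$, $D'$, $D''$ whose $(i,i)$-entries are products $\vphi_1\cdots\vphi_i$, $\vphi'_1\cdots\vphi'_i$, $\vphi''_1\cdots\vphi''_i$ (by Definition \ref{def:matrixD}), and the upper triangular Toeplitz matrices $T$, $T'$, $T''$ together with their inverses (by Lemma \ref{lem:Toeplitz}). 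Because $T'$ has $(i,j)$-entry $\alpha'_{j-i}$ for $i\leq j$ and zero otherwise, and $(T')^{-1}$ has $(i,j)$-entry $\beta'_{j-i}$ for $i\leq j$ and zero otherwise (and similarly for the other primed/double-primed versions), all computations reduce to evaluating sums in which $F_r$ collapses the dimension.

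Concretely, I would handle the four $(A,B)$-type rows first: the matrices $F_r$, $Z F_r Z$, $ZF_rZ$, $F_r$ give the single-entry results with the stated support conditions $i=r=j$ or $i=d-r=j$. For the $(B,C)$ block I would expand $(T'')^{-1} F_{d-r} T''$: writing the $(i,j)$-entry as
\[
\sum_{k,\ell} \bigl((T'')^{-1}\bigr)_{i,k}\,(F_{d-r})_{k,\ell}\,(T'')_{\ell,j}
= \beta''_{d-r-i}\,\alpha''_{j-d+r},
\]
which is nonzero exactly when $i\leq d-r\leq j$, matching the second column of the table. The three remaining $(B,C)$-type rows are obtained from this base case by conjugating/multiplying by $Z$ (which swaps an index $x$ with $d-x$) and by $D'$ (which introduces the factor $\vphi'_1\cdots\vphi'_i$ in the appropriate row/column), accounting for the prefactors $\vphi'_{d-i+1}\cdots\vphi'_{d-j}$ and $\vphi'_{i+1}\cdots\vphi'_j$ via cancellation of common head-terms in the products defining $(D')^{-1}$ and $D'$. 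The $(C,A)$-type rows are treated in exactly the same style, starting from $T' F_r (T')^{-1}$ and pre/post-composing with $Z$ and $D''$.

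The forward direction is therefore entirely routine, but the \emph{main bookkeeping obstacle} will be tracking two things simultaneously: (a) the correct index ranges where each Toeplitz entry is nonzero (which produces the inequality conditions in the third column), and (b) the precise range of indices in the $\vphi'$- or $\vphi''$-product after the action of $Z$ (which swaps $i\leftrightarrow d-i$ and therefore flips a prefix product into a tail product). I would organize the writeup so that each of the twelve rows is derived in a single short display, checking in each case that the support condition in column three is exactly the conjunction of the support conditions of the Toeplitz factors and that the $\vphi'$- or $\vphi''$-prefactor comes out as the quotient of two products $\vphi_1\cdots\vphi_a / \vphi_1\cdots\vphi_b$ reindexed through $Z$. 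Since Propositions \ref{prop:Edr} and \ref{prop:Eddr} have strictly parallel structure (they are obtained from \ref{prop:Er} by the cyclic shift in Definition \ref{def:prime}), no additional work is required beyond noting this parallelism.
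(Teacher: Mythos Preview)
Your proposal is correct and follows essentially the same approach as the paper: the paper states only that Propositions \ref{prop:Erij}--\ref{prop:Eddrij} ``are obtained by computing the entries of the matrices given in Propositions \ref{prop:Er}--\ref{prop:Eddr},'' which is precisely the routine entrywise expansion you describe. Your writeup supplies the details the paper omits, including the sample computation for the $(B,C)$ row and the description of how $Z$ and $D'$, $D''$ contribute the index reversals and $\vphi'$-, $\vphi''$-products.
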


\begin{proposition}   \label{prop:Edrij}    \samepage
\ifDRAFT {\rm prop:Edrij}. \fi
For $0 \leq r,i,j \leq d$,
with respect to the basis in the first column,  the matrix representing $E'_r$ 
has $(i,j)$-entry in the second column when the condition in the third column is satisfied,
and $0$ otherwise:
\[
\begin{array}{c|c|c}
\text{\rm basis} & \text{\rm $(i,j)$-entry of the matrix representing $E'_r$} & \text{\rm condition}
\\ \hline
(A,B) & \alpha''_{r-d+i} \beta''_{d-r-j} \vphi_{j+1} \cdots \vphi_i& j \leq d-r \leq i     \rule{0mm}{2.5ex}
\\
\text{\rm inv.$(A,B)$} & \alpha''_{r-i} \beta''_{j-r} \vphi_{d-j+1} \cdots \vphi_{d-i} & i \leq r \leq j
\\
(B,A) & \alpha''_{r-i} \beta''_{j-r} & i \leq r \leq j
\\
\text{\rm inv.$(B,A)$} & \alpha''_{r-d+i} \beta''_{d-r-j} & j \leq d-r \leq i
\\ \hline
(B,C) & 1 & i = r = j      \rule{0mm}{2.5ex}
\\
\text{\rm inv.$(B,C)$} & 1 & i = d-r = j
\\
(C,B) & 1 & i = d-r = j
\\
\text{\rm inv.$(C,B)$} & 1 & i = r = j
\\ \hline
(C,A) & \alpha_{r-d+j} \beta_{d-r-i} & i \leq d-r \leq j     \rule{0mm}{2.5ex}
\\
\text{\rm inv.$(C,A)$} & \alpha_{r-j} \beta_{i-r}  & j \leq r \leq i
\\
(A,C) & \alpha_{r-j} \beta_{i-r} \vphi''_{d-i+1} \cdots \vphi''_{d-j} & j \leq r \leq i
\\
\text{\rm inv.$(A,C)$} & \alpha_{r-d+j} \beta_{d-r-i} \vphi''_{i+1} \cdots \vphi''_j & i \leq d-r \leq j
\end{array}
\]
\end{proposition}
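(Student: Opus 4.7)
The plan is to compute the $(i,j)$-entries directly from the matrix expressions listed in Proposition \ref{prop:Edr}. The crucial simplification is that each of the twelve expressions has the shape $M_{\mathrm{L}}\,F_s\,M_{\mathrm{R}}$, where $s\in\{r,d-r\}$ and $M_{\mathrm{L}},M_{\mathrm{R}}$ are short products of matrices from the list $Z,D^{\pm 1},(D')^{\pm 1},(D'')^{\pm 1},T^{\pm 1},(T')^{\pm 1},(T'')^{\pm 1}$. Since $F_s$ has a single nonzero entry $1$ in position $(s,s)$, one has
\[
(M_{\mathrm{L}} F_s M_{\mathrm{R}})_{i,j} = (M_{\mathrm{L}})_{i,s}\,(M_{\mathrm{R}})_{s,j},
\]
so the whole computation reduces to reading off one entry from each of $M_{\mathrm{L}}$ and $M_{\mathrm{R}}$.

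The ingredients needed for this readoff are the entry formulas for the atomic factors: $Z_{i,j}=\delta_{i+j,d}$, so left-multiplication by $Z$ performs $i\mapsto d-i$ and right-multiplication performs $j\mapsto d-j$; the diagonal matrices $D,D',D''$ of Definition \ref{def:matrixD} contribute the telescoping factors $\vphi_1\cdots\vphi_i$, $\vphi'_1\cdots\vphi'_i$, $\vphi''_1\cdots\vphi''_i$ respectively, so a conjugation such as $D(\cdot)D^{-1}$ produces the factor $\vphi_{j+1}\cdots\vphi_i$; and by Lemma \ref{lem:Toeplitz}, the Toeplitz matrix $T''$ has $(k,l)$-entry $\alpha''_{l-k}$ for $k\le l$ and zero otherwise, while $(T'')^{-1}$ has $(k,l)$-entry $\beta''_{l-k}$ under the same support condition; the analogous formulas hold for $T, T'$ and their inverses. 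As a sample computation, for the $(A,B)$-basis we have $M_{\mathrm{L}}=DZT''$ and $M_{\mathrm{R}}=(T'')^{-1}ZD^{-1}$; expanding gives $(M_{\mathrm{L}})_{i,r}=(\vphi_1\cdots\vphi_i)\,\alpha''_{r-d+i}$ when $d-i\le r$ and $(M_{\mathrm{R}})_{r,j}=\beta''_{d-r-j}\,(\vphi_1\cdots\vphi_j)^{-1}$ when $r\le d-j$, so the product is $\alpha''_{r-d+i}\beta''_{d-r-j}\vphi_{j+1}\cdots\vphi_i$ under the combined condition $j\le d-r\le i$, matching the first row of the claimed table.

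The four cases corresponding to the $(B,C)$, inverted $(B,C)$, $(C,B)$, and inverted $(C,B)$ bases are immediate, since the representing matrix is $F_r$ or $F_{d-r}$. Each of the remaining eight cases is handled identically to the $(A,B)$ sample above: substitute the atomic entry formulas, collapse via $F_s$, track the index reversals induced by the $Z$-factors, and telescope the diagonal factors. The only real obstacle is bookkeeping—keeping straight how each $Z$-flip inverts an inequality and exchanges $r$ with $d-r$ in the subscripts of $\alpha''$ and $\beta''$ (or $\alpha,\beta$), and which $\vphi$-products survive after cancellation—but no new idea is required beyond direct expansion.
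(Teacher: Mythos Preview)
Your proof is correct and follows exactly the approach indicated in the paper, which simply states that Propositions \ref{prop:Erij}--\ref{prop:Eddrij} ``are obtained by computing the entries of the matrices given in Propositions \ref{prop:Er}--\ref{prop:Eddr}.'' You have unpacked this computation explicitly, and the sample calculation for the $(A,B)$-basis is accurate; the remaining cases are indeed straightforward bookkeeping of the same kind.
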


\begin{proposition}   \label{prop:Eddrij}    \samepage
\ifDRAFT {\rm prop:Eddrij}. \fi
For $0 \leq r,i,j \leq d$,
with respect to the basis in the first column,  the matrix representing $E''_r$ 
has $(i,j)$-entry in the second column when the condition in the third column is satisfied,
and $0$ otherwise:
\[
\begin{array}{c|c|c}
\text{\rm basis} & \text{\rm $(i,j)$-entry of the matrix representing $E''_r$} & \text{\rm condition}
\\ \hline
(A,B) &  \alpha'_{r-d+j} \beta'_{d-r-i} & i \leq d-r \leq j  \rule{0mm}{2.5ex}
\\
\text{\rm inv.$(A,B)$} &  \alpha'_{r-j} \beta'_{i-r} & j \leq r \leq i
\\
(B,A) & \alpha'_{r-j} \beta'_{i-r} \vphi_{d-i+1} \cdots \vphi_{d-j} & j \leq r \leq i
\\
\text{\rm inv.$(B,A)$} & \alpha'_{r-d+j} \beta'_{d-r-i} \vphi_{i+1} \cdots \vphi_j & i \leq d-r \leq j
\\ \hline
(B,C) &   \alpha_{r-d+i} \beta_{d-r-j} \vphi'_{j+1} \cdots \vphi'_i & j \leq d-r \leq i        \rule{0mm}{2.5ex}
\\
\text{\rm inv.$(B,C)$} & \alpha_{r-i} \beta_{j-r} \vphi'_{d-j+1} \cdots \vphi'_{d-i} & i \leq r \leq j
\\
(C,B) & \alpha_{r-i} \beta_{j-r} & i \leq r \leq j
\\
\text{\rm inv.$(C,B)$} &   \alpha_{r-d+i} \beta_{d-r-j} & j \leq d-r \leq i
\\ \hline
(C,A) &  1 & i=r=j   \rule{0mm}{2.5ex}
\\
\text{\rm inv.$(C,A)$} & 1 & i = d-r = j
\\
(A,C) &  1 & i = d-r = j
\\
\text{\rm inv.$(A,C)$} & 1 & i = r = j
\end{array}
\]
\end{proposition}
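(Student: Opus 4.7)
The plan is to extract each $(i,j)$-entry by expanding the matrix products listed in Proposition \ref{prop:Eddr}, using three basic facts: (a) $T$, $T^{-1}$, $T'$, $(T')^{-1}$ are upper triangular Toeplitz with parameters $\{\alpha_k\}$, $\{\beta_k\}$, $\{\alpha'_k\}$, $\{\beta'_k\}$ by Lemma \ref{lem:Toeplitz}; (b) by Definition \ref{def:Fr}, $F_s$ is the matrix unit with single nonzero entry $1$ in position $(s,s)$, so left (resp.\ right) multiplication by $F_s$ isolates row $s$ (resp.\ column $s$); (c) by Definition \ref{def:matrixD}, $Z$ reverses indices via $i \mapsto d-i$, while $D$, $D'$, $D''$ are diagonal with entries $\varphi_1\cdots\varphi_i$, $\varphi'_1\cdots\varphi'_i$, $\varphi''_1\cdots\varphi''_i$.

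First I would handle the $(A,B)$-block. Start from $M = (T')^{-1} F_{d-r} T'$. The factor $F_{d-r}$ picks out column $d-r$ of $T'$ and row $d-r$ of $(T')^{-1}$, giving $M_{ij} = \beta'_{d-r-i}\,\alpha'_{r-d+j}$ precisely when $i \leq d-r \leq j$ and zero otherwise. The other three $(A,B)$-block rows of the table follow by conjugating $M$ by $Z$ and/or $D^{\pm 1}$; conjugation by $Z$ replaces $(i,j)$ with $(d-i, d-j)$, while conjugation by $D$ multiplies the $(i,j)$-entry by $D_{jj}/D_{ii}$, producing the telescoping products $\varphi_{i+1}\cdots\varphi_j$ or, after reversal, $\varphi_{d-i+1}\cdots\varphi_{d-j}$. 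The inequality conditions on $i,j,r$ are the transforms of $i \leq d-r \leq j$ under these index substitutions. The $(B,C)$-block is handled identically, starting from $N = T F_r T^{-1}$ whose $(i,j)$-entry is $\alpha_{r-i}\,\beta_{j-r}$ when $i \leq r \leq j$, and then conjugating by $Z$ and/or $(D')^{\pm 1}$. The $(C,A)$-block requires essentially no calculation since $E''_r$ is represented by $F_r$ with respect to a $(C,A)$-basis, and the remaining three rows follow from the effect of multiplying $F_r$ on the left and right by $Z$ and $D''$, which merely rearranges and rescales the single nonzero entry.

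The only care required lies in bookkeeping for the index shifts induced by $Z$ and for the telescoping cancellation of the diagonal factors $D$, $D'$, $D''$; in particular, after each $Z$-conjugation, the inequality $i \leq d-r \leq j$ that controls the support of the Toeplitz computation transforms to $j \leq r \leq i$ (or back), which matches the conditions recorded in the third column of the table. No genuine obstacle is anticipated; the proposition is essentially a table of entries obtained by direct inspection, parallel in structure to Propositions \ref{prop:Erij} and \ref{prop:Edrij}.
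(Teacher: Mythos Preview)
Your proposal is correct and follows exactly the approach indicated in the paper: the paper merely states that Propositions \ref{prop:Erij}--\ref{prop:Eddrij} ``are obtained by computing the entries of the matrices given in Propositions \ref{prop:Er}--\ref{prop:Eddr},'' and you have carried out precisely this computation, using the Toeplitz structure of $T$, $T'$ and their inverses together with the index-reversal action of $Z$ and the diagonal scaling by $D$, $D'$. Your write-up is in fact more detailed than the paper's one-line justification.
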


\section{Some lemmas concerning the tridiagonal space}
\label{sec:lemmas}

In this section we prepare some lemmas concerning the tridiagonal space
that we need in our proof of Theorems \ref{thm:main2} and \ref{thm:main1}.
Let $A,B,C$ be an LR triple on $V$.

\begin{lemma}  {\rm (See \cite[Lemma 13.22]{T:LRT}.) }
\label{lem:nonbipartitescalar}   \samepage
\ifDRAFT {\rm lem:nonbipartitescalar}. \fi
Let $\alpha$, $\beta$, $\gamma$ be nonzero scalars in $\F$.
Then the $3$-tuple $\alpha A, \beta B, \gamma C$ is an LR triple on $V$.
Moreover, the idempotent data of this LR triple is equal to the idempotent data
of $A,B,C$.
\end{lemma}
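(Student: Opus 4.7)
The plan is to reduce everything to a single observation at the level of LR pairs: if $X,Y$ is an LR pair on $V$ with $(X,Y)$-decomposition $\{V_i\}_{i=0}^d$, and if $\alpha, \beta \in \F$ are nonzero, then $\alpha X, \beta Y$ is an LR pair whose $(\alpha X, \beta Y)$-decomposition is again $\{V_i\}_{i=0}^d$. To see this, recall that $X V_i = V_{i-1}$ and $Y V_i = V_{i+1}$ for $0 \le i \le d$. Since each $V_j$ is a subspace and $\alpha, \beta$ are nonzero, we have $(\alpha X) V_i = \alpha V_{i-1} = V_{i-1}$ and $(\beta Y) V_i = \beta V_{i+1} = V_{i+1}$. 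Thus $\alpha X$ lowers, and $\beta Y$ raises, the decomposition $\{V_i\}_{i=0}^d$. By definition $\alpha X, \beta Y$ is an LR pair, and by the uniqueness of the decomposition of an LR pair, the $(\alpha X, \beta Y)$-decomposition coincides with the $(X,Y)$-decomposition.

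Now I apply this observation to each of the three LR pairs comprising the triple. Since $A,B$ is an LR pair, so is $\alpha A, \beta B$, with the same $(A,B)$-decomposition. Likewise $\beta B, \gamma C$ is an LR pair with the same $(B,C)$-decomposition, and $\gamma C, \alpha A$ is an LR pair with the same $(C,A)$-decomposition. Thus any two of $\alpha A, \beta B, \gamma C$ form an LR pair on $V$, so $\alpha A, \beta B, \gamma C$ is an LR triple on $V$.

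For the statement about idempotent data, I note that the idempotent sequence $\{E_i\}_{i=0}^d$ associated with a decomposition $\{V_i\}_{i=0}^d$ is defined purely in terms of the decomposition (namely, $(E_i - I) V_i = 0$ and $E_i V_j = 0$ for $j \ne i$), and so depends only on $\{V_i\}_{i=0}^d$, not on the particular LR pair that produces it. Since the three decompositions of $\alpha A, \beta B, \gamma C$ coincide with the three decompositions of $A, B, C$, the three idempotent sequences coincide as well, i.e.\ the idempotent data is preserved.

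The only step that requires any care is the appeal to the uniqueness of the decomposition of an LR pair, which is cited earlier from \cite[Section 3]{T:LRT}; beyond that, the proof is a direct verification and there is no real obstacle.
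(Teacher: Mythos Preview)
Your proof is correct. The paper does not supply its own proof of this lemma; it simply cites \cite[Lemma 13.22]{T:LRT}. Your argument is the natural direct verification: scaling a lowering (resp.\ raising) map by a nonzero scalar still lowers (resp.\ raises) the same decomposition, so each of the three decompositions is preserved, and the idempotent sequences, being determined by the decompositions alone, are unchanged. There is nothing to add.
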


\begin{corollary}   \label{cor:invariant}    \samepage
\ifDRAFT {\rm cor:invariant}. \fi
With reference to Lemma \ref{lem:nonbipartitescalar},
the LR triples $A,B,C$ and $\alpha A, \beta B, \gamma C$ have the
same tridiagonal space.
\end{corollary}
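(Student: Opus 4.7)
The plan is to reduce the claim to the statement that both LR triples induce the same three decompositions of $V$, and then appeal directly to the definition of the tridiagonal space.

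First I would observe that the tridiagonal space $\mathcal{X}$ depends only on the $(A,B)$, $(B,C)$, $(C,A)$ decompositions of $V$, not on the linear transformations themselves; this is immediate from the definition \eqref{eq:deftrid}, or equivalently from the reformulation $E_r X E_s = 0$ for $|r-s| > 1$ (and the analogous conditions for $E'_r$ and $E''_r$). Therefore it suffices to show that the $(\alpha A, \beta B)$-decomposition of $V$ coincides with the $(A,B)$-decomposition, and likewise for the other two pairs.

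This coincidence follows from Lemma \ref{lem:nonbipartitescalar}: that lemma asserts that $\alpha A, \beta B, \gamma C$ is an LR triple on $V$ whose idempotent data \eqref{eq:idempotentdata} agrees with the idempotent data of $A,B,C$. Since each decomposition is recovered from its idempotent sequence via $V_i = E_i V$, the equality of idempotent data forces the equality of the three decompositions. Alternatively, one can see this directly: if $A$ lowers $\{V_i\}_{i=0}^d$ then $\alpha A \cdot V_i = \alpha V_{i-1} = V_{i-1}$ because $\alpha \neq 0$ and $V_{i-1}$ is a subspace, so $\alpha A$ also lowers $\{V_i\}_{i=0}^d$; similarly $\beta B$ raises it, and by the uniqueness of the $(\alpha A, \beta B)$-decomposition noted after the definition of LR pairs, this must be it.

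Combining the two observations: both LR triples determine the same triple of decompositions of $V$, and $\mathcal{X}$ is defined purely in terms of this triple of decompositions, hence the tridiagonal spaces coincide. There is no real obstacle here since Lemma \ref{lem:nonbipartitescalar} does all the work; the proof is essentially a one-line unpacking of definitions.
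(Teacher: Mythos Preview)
Your proposal is correct and takes essentially the same approach as the paper: the paper's proof is simply ``Follows from Lemma \ref{lem:nonbipartitescalar},'' and your argument is exactly the unpacking of that sentence---the tridiagonal space is determined by the idempotent data (equivalently, the three decompositions), and Lemma \ref{lem:nonbipartitescalar} asserts this data is unchanged under the scaling. Your additional direct verification that $\alpha A$ lowers and $\beta B$ raises the same decomposition is a nice self-contained alternative but is not needed given the lemma.
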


\begin{proof}
Follows from Lemma \ref{lem:nonbipartitescalar}.
\end{proof}

\begin{lemma}  {\rm (See \cite[Lemma 13.19]{T:LRT}.)}  
\label{lem:action}  \samepage
\ifDRAFT {\rm lem:action}. \fi
In each row of the table below,
we display a decomposition $\{V_i\}_{i=0}^d$ of $V$.
For $0 \leq i \leq d$ we give the action of $A$, $B$, $C$ on $V_i$.
\[
\begin{array}{c|ccc}
\text{\rm dec. $\{V_i\}_{i=0}^d$} 
& \text{\rm action of $A$ on $V_i$} 
& \text{\rm action of $B$ on $V_i$} 
& \text{\rm action of $C$ on $V_i$} 
\\ \hline
(A,B) &                     \rule{0mm}{5mm}
A V_i = V_{i-1} &
B V_i = V_{i+1} &
C V_i \subseteq V_{i-1} + V_i + V_{i+1}
\\
(B,C) &                     \rule{0mm}{4mm}
A V_i \subseteq V_{i-1} + V_i + V_{i+1} &
B V_i = V_{i-1} &
C V_i = V_{i+1}
\\
(C,A) &               \rule{0mm}{4mm}
A V_i = V_{i+1} &
B V_i \subseteq V_{i-1} + V_i + V_{i+1} &
C V_i = V_{i-1}
\end{array}
\]
\end{lemma}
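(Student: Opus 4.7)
The plan is to verify the table row by row, using the fact that in each case two of the three assertions are immediate from the definition of an LR pair and the remaining assertion is the tridiagonality of the third map on the given decomposition. Concretely, for the $(A,B)$-decomposition $\{V_i\}_{i=0}^d$, the equalities $A V_i = V_{i-1}$ and $B V_i = V_{i+1}$ hold because, by the very definition of the $(A,B)$-decomposition, $A$ lowers it and $B$ raises it. The only content, then, is the containment $C V_i \subseteq V_{i-1} + V_i + V_{i+1}$.

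To establish this last containment, I would fix an $(A,B)$-basis $\{v_i\}_{i=0}^d$ with $v_i \in V_i$ (for $0 \leq i \leq d$) and invoke Lemma \ref{lem:entriesABC}, whose displayed entries for $C$ make its matrix representation tridiagonal. Thus $C v_i$ lies in the span of $v_{i-1}, v_i, v_{i+1}$, which, because each $V_j$ is the line spanned by $v_j$, is exactly the containment $C V_i \subseteq V_{i-1} + V_i + V_{i+1}$. Equivalently, one could cite Lemma \ref{lem:trid} directly, since an $(A,B)$-basis is one of the twelve types listed in \eqref{eq:12bases}.

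For the remaining two rows, I would not argue from scratch but rather use the cyclic convention of Definition \ref{def:prime}. Applying the first row of the table to the LR triple $B,C,A$ (so that the roles of $A,B,C$ are played by $B,C,A$) yields precisely the $(B,C)$ row, and applying it to $C,A,B$ yields the $(C,A)$ row. Since any two of $A,B,C$ form an LR pair, both $B,C,A$ and $C,A,B$ are legitimate LR triples on $V$, so the argument of the first paragraph applies without change.

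The only step requiring any genuine input is the tridiagonality of the third map on each decomposition, and this is already packaged in Lemma \ref{lem:entriesABC} (or Lemma \ref{lem:trid}); the potential pitfall is merely to ensure that one does not appeal to Lemma \ref{lem:action} itself when invoking those prior lemmas, which is fine because both are imported from \cite{T:LRT}. Apart from that, the lemma is essentially a bookkeeping restatement of the defining properties of the three decompositions together with a single tridiagonality fact, cyclically permuted.
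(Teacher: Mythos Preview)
Your argument is correct. Note, however, that the paper does not actually prove this lemma: it simply imports it as \cite[Lemma 13.19]{T:LRT}, so there is no ``paper's own proof'' to compare against. Your reconstruction---read off the lowering/raising actions directly from the definition of the $(A,B)$-decomposition, deduce tridiagonality of $C$ from the explicit matrix entries in Lemma~\ref{lem:entriesABC}, and then cycle via Definition~\ref{def:prime}---is a sound way to recover the result from the other cited material.

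One small caution on your circularity remark: Lemma~\ref{lem:trid} is not merely ``also from \cite{T:LRT}''---it is cited as the \emph{same} Lemma 13.19 of \cite{T:LRT} as Lemma~\ref{lem:action} itself, so invoking Lemma~\ref{lem:trid} here would be genuinely circular, not just cosmetically so. Your primary route through Lemma~\ref{lem:entriesABC} (which is \cite[Proposition 13.39]{T:LRT}, a distinct result) avoids this, so just drop the parenthetical alternative.
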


\begin{lemma}    \label{lem:contained1}    \samepage
\ifDRAFT {\rm lem:contained1}. \fi
The elements \eqref{eq:contained1} are contained in the tridiagonal space 
for $A,B,C$.
\end{lemma}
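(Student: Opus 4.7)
The plan is to verify membership in $\cal X$ element by element, relying entirely on the action data recorded in Lemma \ref{lem:action}. Recall that $\cal X$ is defined by the tridiagonality condition $X V_i \subseteq V_{i-1}+V_i+V_{i+1}$ on each of the three decompositions. Thus I just have to check, for each of the ten elements in \eqref{eq:contained1} and each of the three decompositions $(A,B)$, $(B,C)$, $(C,A)$, that this containment holds.

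The identity $I$ is immediate since $I V_i = V_i$ for any decomposition. For each of $A,B,C$, Lemma \ref{lem:action} directly exhibits the required containment on all three decompositions (one row of the table gives lowering, one gives raising, and one gives the explicit tridiagonal inclusion), so $A,B,C \in \cal X$.

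The real content is the six triple products. The key observation, which I would state once and reuse, is the following. On any fixed decomposition from Lemma \ref{lem:action}, one of $\{A,B,C\}$ lowers, one raises, and one is tridiagonal; call these $L$, $R$, and $T$. Each of the six triple products $ABC,BCA,CAB,ACB,CBA,BAC$ is (on that decomposition) some ordering of $L$, $R$, $T$. For each of the six orderings of $L,R,T$ I would track how the subspace grows step by step starting from $V_i$, applying the letters from right to left. In every case the $L$ and $R$ contributions cancel out to net shift zero, while the tridiagonal $T$ contributes the $\pm 1$ slack, and one verifies that the final subspace lies in $V_{i-1}+V_i+V_{i+1}$ (for instance, the order $LTR$ on $V_i$ gives successively $V_{i+1}$, then $V_i+V_{i+1}+V_{i+2}$, then $V_{i-1}+V_i+V_{i+1}$; the other five orderings are handled identically).

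Carrying out this six-case chase once establishes tridiagonality of all six products on any one decomposition, and hence on all three decompositions simultaneously. This yields the desired conclusion that \eqref{eq:contained1} is contained in $\cal X$. The only real obstacle is bookkeeping, not a conceptual difficulty: one must be careful that the subspaces can grow in intermediate steps even when the net shift is small, so the verification is genuinely per-ordering rather than purely arithmetic on shifts.
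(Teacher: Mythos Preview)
Your proposal is correct and follows essentially the same approach as the paper: both rely on Lemma~\ref{lem:action} and verify the tridiagonal containment by tracking how the subspace $V_i$ moves under successive applications of the factors. The paper writes out the case $ABC$ on the $(A,B)$-decomposition explicitly (using $AB V_i \subseteq V_i$ and $C V_i \subseteq V_{i-1}+V_i+V_{i+1}$) and dismisses the rest as similar; your repackaging into six orderings of a generic $L,R,T$ is a tidy way to handle all products on all decompositions at once, but the underlying argument is identical.
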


\begin{proof}
Let $\cal X$ be the tridiagonal space for $A,B,C$.
Clearly $I$ is contained in $\cal X$.
By Lemma \ref{lem:action} each of $A$, $B$, $C$ is tridiagonal with respect to
the $(A,B)$-basis, $(B,C)$-basis, and $(C,A)$-basis.
So $A,B,C$ are contained in $\cal X$.
We show $ABC$ is contained in $\cal X$.
Let $\{V_i\}_{i=0}^d$ be the $(A,B)$-decomposition of $V$.
Pick any $i$ $(0 \leq i \leq d)$.
By Lemma \ref{lem:action} $A B V_i \subseteq V_i$,
and $C V_i \subseteq V_{i-1} + V_i + V_{i+1}$.
By these comments $ABC V_i \subseteq V_{i-1} + V_i + V_{i+1}$.
So $ABC$ is tridiagonal with respect to $\{V_i\}_{i=0}^d$.
In a similar way, we can show that $ABC$ is tridiagonal with respect to
the $(B,C)$-decomposition and the $(C,A)$-decomposition.
We have shown that $ABC$ is contained in $\cal X$.
The proof is similar for the remaining elements in \eqref{eq:contained1}.
\end{proof}

\begin{lemma}    \label{lem:dimXd2}    \samepage
\ifDRAFT {\rm lem:dimXd2}. \fi
Assume $d=2$.
Then the tridiagonal space for $A,B,C$ has dimension at most $6$.
\end{lemma}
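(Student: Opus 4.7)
The plan is to fix an $(A,B)$-basis $\{v_0,v_1,v_2\}$ of $V$ and to exhibit three linearly independent linear conditions that every $X \in {\cal X}$ must satisfy, which then forces $\dim {\cal X} \le 9-3 = 6$.

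Writing $X$ as a matrix $(X_{ij})_{0 \le i,j \le 2}$ in the $(A,B)$-basis, tridiagonality with respect to the $(A,B)$-decomposition becomes the pair of independent scalar conditions $X_{02}=0$ and $X_{20}=0$, leaving seven free entries. I then use Propositions \ref{prop:Edrij} and \ref{prop:Eddrij} to represent the four idempotents $E'_0,E'_2,E''_0,E''_2$ as matrices in the $(A,B)$-basis. For $d=2$, the index conditions $j \le d-r \le i$ and $i \le d-r \le j$ appearing in those formulas force these four matrices to be supported on only one row or column: $E'_0$ on row $2$, $E'_2$ on column $0$, $E''_0$ on column $2$, and $E''_2$ on row $0$. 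A direct multiplication then shows that the tridiagonality conditions $E'_2 X E'_0 = 0$ and $E''_0 X E''_2 = 0$ hold automatically once $X_{02}=X_{20}=0$ has been imposed, so they contribute no new linear constraint.

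The two potentially new conditions are $E'_0 X E'_2 = 0$ and $E''_2 X E''_0 = 0$, each reducing to a single scalar equation in the seven remaining entries of $X$. I plan to compute the second one explicitly; using Lemma \ref{lem:13.46} together with the Toeplitz inversion identity $\beta'_2 = (\alpha'_1)^2-\alpha'_2$, the coefficients of $X_{00}$ and $X_{22}$ in this equation come out to $\beta'_2$ and $\alpha'_2$ respectively. These two scalars vanish simultaneously if and only if $\alpha'_1 = \alpha'_2 = 0$, i.e., $T' = I$.

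The remaining hurdle, which I expect to be the main technical point, is to rule out $T' = I$. The argument will be: $T' = I$ would mean that an $(A,C)$-basis already is an $(A,B)$-basis, so the $(A,C)$-decomposition equals the $(A,B)$-decomposition and in particular $Cv_i \in V_{i+1}$ for all $i$. But Lemma \ref{lem:entriesABC} gives $Cv_1$ a nonzero $v_0$-coefficient $\vphi'_2/\vphi_1$, contradicting $Cv_1 \in V_2$. Hence $T' \ne I$, the equation $E''_2 X E''_0=0$ is a genuine linear constraint on the remaining seven entries, and it is independent of $X_{02}=0$ and $X_{20}=0$ since it involves entirely different entries of the matrix. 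This yields the required third independent condition and thus $\dim {\cal X} \le 6$.
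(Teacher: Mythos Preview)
Your proof is correct and follows essentially the same route as the paper: fix an $(A,B)$-basis, note $X_{02}=X_{20}=0$, and extract one further scalar equation from $E''_2 X E''_0 = 0$ (namely its $(0,2)$-entry), giving at most seven minus one free entries.

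The only real difference is in how you justify that this scalar equation is nontrivial. The paper simply invokes \cite[Lemma~13.62]{T:LRT} to get $\beta'_2 \neq 0$, which immediately makes the coefficient of $X_{00}$ nonzero. You instead argue by cases on $\beta'_2$ and $\alpha'_2$, reduce the degenerate case to $T'=I$, and rule that out via Lemma~\ref{lem:entriesABC}. Your argument has the virtue of being self-contained, avoiding the external citation; on the other hand it is longer, and your preliminary analysis of the ``automatic'' conditions $E'_2 X E'_0 = 0$ and $E''_0 X E''_2 = 0$ is not needed for the upper bound (only one extra condition is required).
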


\begin{proof}
Let $\cal X$ be the tridiagonal space for $A,B,C$.
Fix an $(A,B)$-basis $\{v_i\}_{i=0}^d$ for $V$.
We identify each element of $\text{End}(V)$ with the matrix
that represents it with respect to $\{v_i\}_{i=0}^d$.
Pick any $X \in {\cal X}$.
By construction,
$X$ is tridiagonal, so $X_{0,2}=0$ and $X_{2,0}=0$.
By the definition of $\cal X$ we have
$E''_2 X E''_0 = 0$.
Let \eqref{eq:Toeplitzdata} be the Toeplitz data of $A,B,C$.
By Proposition \ref{prop:Eddrij}
\begin{align*}
  (E''_2)_{0,i} &= \alpha'_i \beta'_0 && (0 \leq i \leq d),
\\
 (E''_0)_{j,2} &= \alpha'_0 \beta'_{2-j}   && (0 \leq j \leq d).
\end{align*}
Using this we compute the $(0,2)$-entry of $E''_2 X E''_0$ to find
that $\alpha'_0 \beta'_0$ times
\begin{equation}     \label{eq:zero}
  \alpha'_0 \beta'_2 X_{0,0} + \alpha'_0 \beta'_1 X_{0,1} 
 + \alpha'_1 \beta'_2 X_{1,0} + \alpha'_1 \beta'_1 X_{1,1} + \alpha'_1 \beta'_0 X_{1,2}
 + \alpha'_2 \beta'_1 X_{2,1} + \alpha'_2 \beta'_0 X_{2,2}
\end{equation}
is $0$.
By Lemma \ref{lem:13.46} $\alpha'_0=1$ and $\beta'_0=1$.
So \eqref{eq:zero} is $0$.
By \cite[Lemma 13.62]{T:LRT} $\beta'_2 \neq 0$.
Therefore $X_{0,0}$ is uniquely determined by the $6$ entries
$X_{0,1}$, $X_{1,0}$, $X_{1,1}$, $X_{1,2}$, $X_{2,1}$, $X_{2,2}$.
Therefore $\cal X$ has dimension at most $6$.
\end{proof}

\section{Nonbipartite LR triples}
\label{sec:nonbipartite}

In this section, we recall the classification of nonbipartite LR triples.
Let $A,B,C$ be an LR triple on $V$ with parameter array \eqref{eq:parray},
Toeplitz data \eqref{eq:Toeplitzdata},
and idempotent data \eqref{eq:idempotentdata}.
To avoid triviality, we assume $d \geq 1$.

\begin{lemma}   {\rm (See \cite[Lemma 16.5]{T:LRT}.) }  \label{lem:alpha1}   \samepage
\ifDRAFT {\rm lem:alpha1}. \fi
Assume $A,B,C$ is nonbipartite.
Then each of
$\alpha_1$, $\alpha'_1$, $\alpha''_1$, $\beta_1$, $\beta'_1$, $\beta''_1$ is nonzero.
\end{lemma}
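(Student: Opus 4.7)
Plan. By Lemma~\ref{lem:13.46} we have $\beta_1 = -\alpha_1$, $\beta'_1 = -\alpha'_1$, $\beta''_1 = -\alpha''_1$, so it suffices to prove that $\alpha_1$, $\alpha'_1$, $\alpha''_1$ are all nonzero. My strategy is to derive explicit formulas tying these three Toeplitz parameters to the trace data \eqref{eq:tracedata}, and then invoke the nonbipartite hypothesis.

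First I would work in an $(A,B)$-basis. By Lemma~\ref{lem:entriesABC} the matrix of $A$ has as its only nonzero entries $A_{i-1,i}=1$ for $1\le i\le d$, and therefore
\[ a'_r \;=\; \tr(AE'_r) \;=\; \sum_{k=0}^{d-1}(E'_r)_{k+1,k}. \]
By Proposition~\ref{prop:Edrij}, in the $(A,B)$-basis the entry $(E'_r)_{i,j}$ vanishes unless $j\le d-r\le i$; on the subdiagonal this forces $k=d-r-1$ or $k=d-r$. Using Lemma~\ref{lem:13.46} (so $\alpha''_0 = \beta''_0 = 1$ and $\beta''_1 = -\alpha''_1$) to evaluate the two surviving entries and summing yields
\[ a'_r \;=\; \alpha''_1\,(\vphi_{d-r+1}-\vphi_{d-r}), \qquad 0\le r\le d, \]
with the convention $\vphi_0=\vphi_{d+1}=0$. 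Feeding this derivation through the prime convention (Definition~\ref{def:prime}) applied to the LR triples $B,C,A$ and $C,A,B$ produces the companion formulas
\[ a''_r \;=\; \alpha_1\,(\vphi'_{d-r+1}-\vphi'_{d-r}), \qquad a_r \;=\; \alpha'_1\,(\vphi''_{d-r+1}-\vphi''_{d-r}). \]
Specializing to $r=d$ and recalling that $\vphi_1,\vphi'_1,\vphi''_1$ are nonzero yields the compact identities
\[ \alpha''_1 = a'_d/\vphi_1, \qquad \alpha_1 = a''_d/\vphi'_1, \qquad \alpha'_1 = a_d/\vphi''_1. \]
In particular, the bipartite condition $a_r=a'_r=a''_r=0$ for all $r$ is equivalent to $\alpha_1=\alpha'_1=\alpha''_1=0$.

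The delicate direction is the converse. The nonbipartite hypothesis asserts only that some entry in some one of the three trace sequences is nonzero; the formulas above then directly yield that one matching Toeplitz parameter is nonzero, whereas the lemma claims all three are. To close this gap I would argue that the three vanishing conditions $\alpha_1=0$, $\alpha'_1=0$, $\alpha''_1=0$ are mutually equivalent. My first attempt would be to derive additional relations among the transition matrices $T,T',T''$ from Section~\ref{sec:trans}, chaining them around all three LR pairs to couple their first off-diagonal Toeplitz entries; if that coupling is not immediate, the fallback is to verify the lemma case-by-case from the classification of nonbipartite LR triples recalled in Section~\ref{sec:nonbipartite}. I expect this coupling among the three $\alpha$-parameters to be the main obstacle.
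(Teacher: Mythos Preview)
The paper does not prove this lemma; it simply cites \cite[Lemma 16.5]{T:LRT}. So there is no in-paper argument to compare against, and your proposal must be judged on its own.

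Your derivation of the formula $a_r = \alpha'_1(\vphi''_{d-r+1}-\vphi''_{d-r})$ (and its primed variants) is correct; indeed this is exactly Lemma~\ref{lem:ai} combined with Lemma~\ref{lem:13.46}, so that part of the plan is sound and efficiently reproduces known material.

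The gap you yourself flag is real, and your proposed ways of closing it are not yet convincing. The nonbipartite hypothesis gives you only that \emph{some} $a_r$, $a'_r$, or $a''_r$ is nonzero, hence only that \emph{one} of $\alpha_1,\alpha'_1,\alpha''_1$ is nonzero. To finish you must show that the three conditions $\alpha_1=0$, $\alpha'_1=0$, $\alpha''_1=0$ are equivalent. Your ``fallback'' of appealing to the classification in Section~\ref{sec:nonbipartite} is circular: the very notion of a normalized nonbipartite triple (Definition~\ref{def:normalized1}) presupposes $\alpha_1,\alpha'_1,\alpha''_1\neq 0$, and Lemma~\ref{lem:normalized1} (which underlies the classification list) rests on exactly the lemma you are trying to prove. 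So you cannot use the classification here.

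Your other idea---chaining the transition matrices $T,T',T''$ around the cycle---is the right instinct, but as stated it is only a hope. You would need to exhibit a concrete identity in \cite{T:LRT} (for instance a product relation among $T,T',T''$ or a formula expressing each $\alpha_1$ in terms of data from another pair) that forces the three first-off-diagonal parameters to vanish together. Until that identity is named and verified, the proof is incomplete.
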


\begin{definition}   {\rm (See \cite[Definitions 18.2]{T:LRT}.) }
\label{def:normalized1}   \samepage
\ifDRAFT {\rm def:normalized1}. \fi
Assume $A,B,C$ is nonbipartite.
Then $A,B,C$ is said to be {\em normalized}
whenever $\alpha_1=1$, $\alpha'_1=1$, $\alpha''=1$.
\end{definition}

\begin{lemma}   {\rm (See \cite[Corollary 18.6]{T:LRT}.) }
\label{lem:normalized1}   \samepage
\ifDRAFT {\rm lem:normalized1}. \fi
Assume $A,B,C$ is nonbipartite.
Then there exist nonzero scalars $\alpha$, $\beta$, $\gamma$ in $\F$
such that $\alpha A, \beta B, \gamma C$ is normalized.
\end{lemma}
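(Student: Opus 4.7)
The plan is to analyze how the Toeplitz parameters $\alpha_1$, $\alpha'_1$, $\alpha''_1$ transform under the rescaling $A, B, C \mapsto \alpha A, \beta B, \gamma C$, and then solve for the scalars that make these three parameters equal to $1$. By Lemma \ref{lem:alpha1}, each of $\alpha_1$, $\alpha'_1$, $\alpha''_1$ is nonzero, so this should be feasible.

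First I would track how the relevant bases transform. If $\{v_i\}_{i=0}^d$ is an $(A,B)$-basis of $V$, then $\{\alpha^{-i} v_i\}_{i=0}^d$ is an $(\alpha A, \beta B)$-basis: only the scaling $\alpha$ enters because $A$ is the lowering map, while $\beta$ is irrelevant to the lowering relation and the raising relation holds since the $(A,B)$-decomposition is invariant under scaling (cf.\ Lemma \ref{lem:nonbipartitescalar}). Analogous statements hold for the two other LR pairs.

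Next I would compute the new transition matrix $\tilde{T}'$, which is the transition matrix from an $(\alpha A, \gamma C)$-basis to a compatible $(\alpha A, \beta B)$-basis. Start with compatible bases $\{w_i\}_{i=0}^d$ (an $(A,C)$-basis) and $\{v_i\}_{i=0}^d$ (an $(A,B)$-basis) with $w_0 = v_0$. The rescaled bases $\{\alpha^{-i} w_i\}_{i=0}^d$ and $\{\alpha^{-i} v_i\}_{i=0}^d$ remain compatible, and a short computation from $v_j = \sum_{i \leq j} \alpha'_{j-i} w_i$ yields $\tilde{T}'_{i,j} = \alpha^{-(j-i)} \alpha'_{j-i}$, so in particular $\tilde{\alpha}'_1 = \alpha^{-1} \alpha'_1$. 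By the notational convention of Definition \ref{def:prime}, the same analysis applied to $B,C,A$ and $C,A,B$ gives $\tilde{\alpha}''_1 = \beta^{-1} \alpha''_1$ and $\tilde{\alpha}_1 = \gamma^{-1} \alpha_1$. The key structural point is that each of $T$, $T'$, $T''$ is associated to its LR pair through the lowering map, so only the scaling of the corresponding lowering map enters that parameter.

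Finally, choose $\alpha = \alpha'_1$, $\beta = \alpha''_1$, $\gamma = \alpha_1$; all three are nonzero by Lemma \ref{lem:alpha1}. Then $\alpha A, \beta B, \gamma C$ is an LR triple (by Lemma \ref{lem:nonbipartitescalar}) whose Toeplitz parameters satisfy $\tilde{\alpha}_1 = \tilde{\alpha}'_1 = \tilde{\alpha}''_1 = 1$, so it is normalized in the sense of Definition \ref{def:normalized1}. There is no real obstacle; the only delicate point is the bookkeeping that isolates exactly one scaling factor per Toeplitz parameter.
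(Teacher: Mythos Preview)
The paper does not give its own proof of this lemma; it simply cites \cite[Corollary 18.6]{T:LRT}. Your argument is correct and is essentially the natural one: track how each Toeplitz parameter $\alpha_1$, $\alpha'_1$, $\alpha''_1$ scales under $A,B,C \mapsto \alpha A, \beta B, \gamma C$ (each picks up exactly one inverse scaling factor coming from the lowering map in the corresponding pair), then use Lemma~\ref{lem:alpha1} to ensure the required scalars are nonzero. Your bookkeeping with the rescaled bases $\{\alpha^{-i} v_i\}$ and the resulting Toeplitz entries $\tilde{\alpha}'_k = \alpha^{-k}\alpha'_k$ is accurate, and the choice $\alpha = \alpha'_1$, $\beta = \alpha''_1$, $\gamma = \alpha_1$ does the job.
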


\begin{lemma}  {\rm (See \cite[Lemmas 17.6, 18.3]{T:LRT}.) }
\label{lem:nonbipartitenormalized} \samepage
\ifDRAFT {\rm lem:nonbipartitenormalized}. \fi
Assume $A,B,C$ is nonbipartite and normalized.
Then $\alpha_i = \alpha'_i = \alpha''_i$ for $0 \leq i \leq d$.
Moreover $\vphi_i = \vphi'_i = \vphi''_i$ for $1 \leq i \leq d$.
\end{lemma}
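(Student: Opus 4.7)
The plan is to prove the two assertions by induction on the index, combining the base cases from Lemma \ref{lem:13.46} and Definition \ref{def:normalized1} with the matrix identities derived in Sections \ref{sec:trans} and \ref{sec:idempotents}. The cases $i = 0, 1$ for the Toeplitz parameters are immediate: Lemma \ref{lem:13.46} supplies $\alpha_0 = \alpha'_0 = \alpha''_0 = 1$, and the normalization hypothesis gives $\alpha_1 = \alpha'_1 = \alpha''_1 = 1$. So the substance is in establishing the inductive step for $i \geq 2$ and, separately, the parameter-array equalities.

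For the Toeplitz parameters, I would fix an $(A,B)$-basis and express the three idempotent families $\{E_r\}$, $\{E'_r\}$, $\{E''_r\}$ using the entry formulas of Propositions \ref{prop:Erij}, \ref{prop:Edrij}, \ref{prop:Eddrij}. The orthogonality relations $E'_r E'_s = \delta_{r,s} E'_r$ and $E''_r E''_s = \delta_{r,s} E''_r$, together with the completeness relations $\sum_r E'_r = \sum_r E''_r = I$, translate into polynomial identities coupling $\alpha_i$, $\alpha'_i$, $\alpha''_i$ with the $\vphi$'s. Each new index $i$ contributes only one product $\vphi_{j+1} \cdots \vphi_i$ paired with a bilinear factor of the form $\alpha \beta$, so these identities can be solved inductively and, under the normalization hypothesis, force $\alpha_i = \alpha'_i = \alpha''_i$ at each step. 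For the parameter array, the key observation is that Lemma \ref{lem:entriesABC} displays $\vphi_i$, $\vphi'_{d-i+1}$, and $\vphi''_{d-i+1}$ simultaneously as matrix entries of $B$ and $C$ in an $(A,B)$-basis. Applying the same lemma to the rotated triple $B,C,A$ displays $\vphi'_i$, $\vphi''_{d-i+1}$, and $\vphi_{d-i+1}$ as matrix entries of $C$ and $A$ in a $(B,C)$-basis, and the two bases are related by the transition matrix $(T'')^{-1} Z D^{-1}$ of Lemma \ref{lem:transAB}. Once the Toeplitz parameters are known to coincide (so that $T = T' = T''$), the resulting change-of-basis identity, compared entry by entry, forces $\vphi_i = \vphi'_i$; a second cyclic rotation of Definition \ref{def:prime} then yields $\vphi'_i = \vphi''_i$.

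The main obstacle is the bookkeeping of the inductive step for the Toeplitz parameters: the identities produced by orthogonality of the $E'_r$'s in the $(A,B)$-basis involve many coupled terms indexed by ranges $j \leq r \leq i$ with products of $\vphi$'s, and isolating a clean recursion for $\alpha'_i - \alpha_i$ demands a judicious choice of orthogonality relation and matrix entry to eliminate the auxiliary $\beta$'s. As an alternative that sidesteps this bookkeeping, one may appeal directly to Terwilliger's classification of nonbipartite LR triples in \cite{T:LRT}: the classification exhibits each normalized nonbipartite type with parameter array and Toeplitz data manifestly invariant under the cyclic rotation $A,B,C \mapsto B,C,A$, so the claimed equalities can be read off by inspection of the list.
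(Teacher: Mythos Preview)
The paper does not prove this lemma at all: it is stated with the citation ``See \cite[Lemmas 17.6, 18.3]{T:LRT}'' and no proof is given. Every result in Section~\ref{sec:nonbipartite} of the paper is imported directly from \cite{T:LRT} in this way. So there is no proof in the paper to compare your proposal against; you are supplying an argument where the author chose simply to cite.

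On your two approaches: the inductive route via the idempotent orthogonality relations is in the right spirit, but as you concede, isolating a clean recursion from the entry formulas of Propositions~\ref{prop:Edrij} and \ref{prop:Eddrij} requires nontrivial bookkeeping, and your sketch does not indicate which specific entry identity yields the step $\alpha_i = \alpha'_i$; this is the heart of the matter and you have not pinned it down. Your alternative of reading the equalities off the classification is logically sound within the present paper (since both the lemma and the classification are treated as black boxes from \cite{T:LRT}), but note that in Terwilliger's original development the lemma appears in Sections~17--18 while the classification is carried out in Sections~26--30, so the classification presumably \emph{uses} this lemma and the alternative would be circular as an independent proof.
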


Let $A,B,C$ be a normalized LR triple on $V$ that is not $q$-Weyl type.
Assume $d \geq 2$,
and let \eqref{eq:parray} be the parameter array of $A,B,C$.
By the classification in \cite[Sections 26, 28, 29]{T:LRT},
$A,B,C$ is isomorphic to one of the following three types of LR triples:

\begin{definition}  {\rm (See \cite[Example 28.1]{T:LRT}.) }  \label{def:NBGq}    \samepage
\ifDRAFT {\rm lem:NBGq}. \fi
The LR triple $\text{NBG}_d(\F;q)$ is over $\F$, diameter $d$, nonbipartite, normalized,
and satisfies
\begin{align*}
 d &\geq 2;  \qquad\qquad 0 \neq q \in \F;
\\
  q &\neq 1 \quad (1 \leq i \leq d);  \qquad\qquad q^{d+1} \neq -1;
\\
 \vphi_i &= \frac{q(q^i-1)(q^{i-d-1}-1)}
                      {(q-1)^2}                    \qquad\qquad (1 \leq i \leq d).
\end{align*}
\end{definition}

\begin{definition}     {\rm (See \cite[Example 28.2]{T:LRT}.) }    \label{def:NBG1}    \samepage
\ifDRAFT {\rm lem:NBG1}. \fi
The LR triple $\text{NBG}_d(\F;1)$ is over $\F$, diameter $d$, nonbipartite, normalized,
and satisfies
\begin{align*}
 d &\geq 2;  \qquad\qquad \text{ $\text{\rm Char}(\F)$ is $0$ or greater than $d$};
\\
 \vphi_i &= i(i-d-1)            \qquad\qquad (1 \leq i \leq d).
\end{align*}
\end{definition}

\begin{definition}     {\rm (See \cite[Example 29.1]{T:LRT}.) } \label{def:NBNG}    \samepage
\ifDRAFT {\rm lem:NBNG}. \fi
The LR triple $\text{NBNG}_d(\F;t)$ is over $\F$, diameter $d$, nonbipartite, normalized,
and satisfies
\begin{align*}
 d &\geq 4;  \qquad\qquad \text{$d$ is even};  \qquad \qquad 0 \neq t \in \F;
\\
  t^i &\neq 1 \quad (1 \leq i \leq d/2);  \qquad\qquad t^{d+1} \neq 1;
\\
 \vphi_i &= 
  \begin{cases}
    t^{i/2} -1 & \text{ if $i$ is even},
  \\
  t^{(i-d-1)/2} -1 & \text{ if $i$ is odd}
 \end{cases}  
            \qquad\qquad (1 \leq i \leq d).
\end{align*}
\end{definition}

\section{Bounding the dimension of the tridiagonal space; nonbipartite case}
\label{sec:boundnonbipartite}

Let $A,B,C$ be an LR triple on $V$ with parameter array \eqref{eq:parray},
Toeplitz data \eqref{eq:Toeplitzdata},
and idempotent data \eqref{eq:idempotentdata}.
For $a$, $q \in \F$ and an integer $n \geq 0$, define
\[
  (a;q)_n = (1-a)(1-a q)\cdots (1-a q^{n-1}).
\]
We interpret $(a;q)_0=1$.

\begin{lemma} {\rm (See \cite[Proposition 31.3]{T:LRT})}   \label{lem:alpha}  \samepage
\ifDRAFT {\rm lem:alpha}. \fi
The following hold:
\begin{itemize}
\item[\rm (i)]
For the LR triple $\text{\rm NBG}_d(\F;q)$,
\begin{align*}
  \alpha_i &= \frac{(1-q)^i}
                         {(q;q)_i}          &&  (0 \leq i \leq d).
\end{align*}
\item[\rm (ii)]
For the LR triple  $\text{\rm NBG}_d(\F;1)$,
\begin{align*}
  \alpha_i &= \frac{1}
                         {i !}          &&  (0 \leq i \leq d).
\end{align*}
\item[\rm (iii)]
For the LR triple $\text{\rm NBNG}_d(\F;t)$,
\begin{align*}
 \alpha_i &= 
  \begin{cases}
    \frac{ 1 }
           { (t;t)_{i/2} }                    & \text{ if $i$ is even},
   \\
   \frac{1 }
          { (t;t)_{(i-1)/2} }                  & \text{ if $i$ is odd}
  \end{cases}
                          && (0 \leq i \leq d).
\end{align*}
\end{itemize}
\end{lemma}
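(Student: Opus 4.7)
The plan is to reduce the claim to a direct substitution into a known general formula. Specifically, I would invoke Proposition 31.3 of \cite{T:LRT}, which expresses the Toeplitz parameters $\{\alpha_i\}_{i=0}^d$ of an arbitrary normalized nonbipartite LR triple in terms of its parameter array. Since we have already recalled the explicit parameter arrays of $\text{NBG}_d(\F;q)$, $\text{NBG}_d(\F;1)$, and $\text{NBNG}_d(\F;t)$ in Definitions \ref{def:NBGq}--\ref{def:NBNG}, and since by Lemma \ref{lem:nonbipartitenormalized} the normalization forces $\vphi_i = \vphi'_i = \vphi''_i$ so that the parameter array is captured by a single sequence, the lemma becomes a plug-in-and-simplify exercise.

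If I wanted an independent derivation, I would instead extract a recurrence for $\alpha_i$ directly from the bases considered in Section \ref{sec:trans}. Concretely, Definition \ref{def:T} identifies $T'$ as the transition matrix from an $(A,C)$-basis to a compatible $(A,B)$-basis; applying Lemma \ref{lem:entriesABC} to represent $B$ with respect to each of these bases and then equating the two matrix representations via Lemma \ref{lem:trans} yields a first-order recurrence of the shape $\vphi_i\,\alpha_i = g_i\,\alpha_{i-1}$, where $g_i$ is a simple polynomial in the $\vphi_j$. Iterating this recurrence from the initial condition $\alpha_0 = 1$ of Lemma \ref{lem:13.46} produces a telescoping product that, when specialized to each of the three parameter sequences, collapses to the stated closed form.

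The main obstacle will be the NBNG case. Because the $\vphi_i$ of $\text{NBNG}_d(\F;t)$ depend on the parity of $i$, the telescoping product naturally splits into even- and odd-indexed factors; verifying that the alternating-parity contributions combine cleanly into the single $t$-Pochhammer expression $(t;t)_{\lfloor i/2 \rfloor}^{-1}$ of part (iii) is the most delicate bookkeeping step. The $\text{NBG}_d(\F;q)$ case, by contrast, telescopes directly into $(1-q)^i/(q;q)_i$, and the $\text{NBG}_d(\F;1)$ formula then follows either by the specialization $q \to 1$ or by an independent $1/i!$ computation using Definition \ref{def:NBG1}.
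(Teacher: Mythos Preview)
Your primary approach---simply invoking \cite[Proposition~31.3]{T:LRT}---is exactly what the paper does: the lemma is stated with that citation and no further proof is given. Your additional sketch of an independent derivation via a recurrence on the $\alpha_i$ goes beyond anything the paper provides, so there is nothing in the paper to compare it against.
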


Assume $A,B,C$ is one of 
$\text{\rm NBG}_d(\F;q)$, $\text{\rm NBG}_d(\F;1)$, $\text{\rm NBNG}_d(\F;t)$.
Let ${\cal X}$ be the tridiagonal space for $A,B,C$.

\begin{lemma}   \label{lem:cond}  \samepage
\ifDRAFT {\rm lem:cond}. \fi
The following hold:
\begin{itemize}
\item[\rm (i)]
$\alpha_i \neq 0$ for $0 \leq i \leq d$.
\item[\rm (ii)]
$\alpha_i^2 \vphi_i \neq \alpha_{i-1} \alpha_{i+1} \vphi_{i+1}$ for  $1 \leq i \leq d-1$.
\end{itemize}
\end{lemma}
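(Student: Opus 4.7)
The statement is a purely computational check once we substitute the explicit formulas, so my plan is to handle the three possibilities for $A,B,C$ (namely $\mathrm{NBG}_d(\F;q)$, $\mathrm{NBG}_d(\F;1)$, and $\mathrm{NBNG}_d(\F;t)$) separately, using Lemma \ref{lem:alpha} for the $\alpha_i$ and Definitions \ref{def:NBGq}--\ref{def:NBNG} for the $\varphi_i$.

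\textbf{Part (i).} In each case I show the denominator appearing in $\alpha_i$ is nonzero. For $\mathrm{NBG}_d(\F;q)$, $\alpha_i=(1-q)^i/(q;q)_i$; each factor $1-q^k$ with $1\le k\le i\le d$ is nonzero by the assumption $q^k\neq 1$ for $1\le k\le d$, and $1-q\neq 0$, so $\alpha_i\neq 0$. For $\mathrm{NBG}_d(\F;1)$, $\alpha_i=1/i!$ is nonzero because $\mathrm{Char}(\F)$ is $0$ or exceeds $d$. For $\mathrm{NBNG}_d(\F;t)$, the relevant denominator is $(t;t)_k$ with $k\le d/2$, and each factor $1-t^j$ ($1\le j\le d/2$) is nonzero by assumption.

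\textbf{Part (ii).} The key algebraic simplification is that in all three cases the ratio $\alpha_{i-1}\alpha_{i+1}/\alpha_i^2$ has a clean closed form, so the inequality reduces to comparing a simple expression in $\varphi_i$ with one in $\varphi_{i+1}$. For $\mathrm{NBG}_d(\F;q)$, one computes $\alpha_{i-1}\alpha_{i+1}/\alpha_i^2=(1-q^i)/(1-q^{i+1})$; after cancellation the inequality $\alpha_i^2\varphi_i\neq\alpha_{i-1}\alpha_{i+1}\varphi_{i+1}$ becomes $q^{i-d-1}\neq q^{i-d}$, which holds since $q\neq 1$. For $\mathrm{NBG}_d(\F;1)$, $\alpha_{i-1}\alpha_{i+1}/\alpha_i^2=i/(i+1)$, and the inequality reduces to $i(i-d-1)\neq i(i-d)$, which holds since $i\ge 1$ and the characteristic assumption. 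For $\mathrm{NBNG}_d(\F;t)$ I split on the parity of $i$: if $i$ is even the inequality reduces to $-1\neq t^{(i-d)/2}-1$, and if $i$ is odd it reduces to $t^{(i-d-1)/2}-1\neq -1$; both hold because $t\neq 0$, so no power of $t$ can equal $0$.

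The only real subtlety is bookkeeping of indices in the $\mathrm{NBNG}$ case, where the formulas for $\alpha_i$ and $\varphi_i$ depend on the parity of $i$ and both parities arise among the three consecutive indices $i-1,i,i+1$; this is the step I expect to require the most care, but since $d$ is even in that case all half-integer exponents that appear are actually integers, so the calculations go through cleanly.
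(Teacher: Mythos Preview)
Your proposal is correct and takes essentially the same approach as the paper: a case-by-case direct computation using the explicit formulas from Lemma~\ref{lem:alpha} and Definitions~\ref{def:NBGq}--\ref{def:NBNG}. The paper organizes part~(ii) slightly differently, computing the single quantity $\alpha_i\varphi_i/\alpha_{i-1}-\alpha_{i+1}\varphi_{i+1}/\alpha_i$ in each case (obtaining $-q^{i-d}$, $-1$, and $\pm t^{(\cdot)}$ respectively), but this is just a rescaling of your comparison and the verifications coincide.
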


\begin{proof}
(i):
Follows from Lemma \ref{lem:alpha}.

(ii):
We show
\begin{align}
    \frac{\alpha_i \vphi_i}
         {\alpha_{i-1} } 
 - \frac{\alpha_{i+1} \vphi_{i+1} }
          {\alpha_i} 
   & \neq 0   && (1 \leq i \leq d-1).        \label{eq:cond}
\end{align}
First consider $\text{\rm NBG}_d(\F;q)$.
Using Definition \ref{def:NBGq} and Lemma \ref{lem:alpha}(i), one checks
\begin{align*}
  \frac{\alpha_i \vphi_i}
         {\alpha_{i-1} } 
 - \frac{\alpha_{i+1} \vphi_{i+1} }
          {\alpha_i}
 &= - q^{i-d}
  &&   (1 \leq i \leq d-1).
\end{align*}
So \eqref{eq:cond} holds.
Next consider $\text{NBG}_d(\F;1)$.
Using Definition \ref{def:NBG1} and Lemma \ref{lem:alpha}(ii), one checks
\begin{align*}
  \frac{\alpha_i \vphi_i}
         {\alpha_{i-1} } 
 - \frac{\alpha_{i+1} \vphi_{i+1} }
          {\alpha_i}
 &= - 1
  &&   (1 \leq i \leq d-1).
\end{align*}
So \eqref{eq:cond} holds.
Next consider $\text{\rm NBNG}_d(\F;t)$.
Using Definition \ref{def:NBNG} and Lemma \ref{lem:alpha}(iii), one checks
\begin{align*}
  \frac{\alpha_i \vphi_i}
         {\alpha_{i-1} } 
 - \frac{\alpha_{i+1} \vphi_{i+1} }
          {\alpha_i}
 &= 
  \begin{cases}
    - t^{(i-d)/2} & \text{ if $i$ is even},
   \\
   t^{(i-d-1)/2} & \text{ if $i$ is odd}
  \end{cases}
  &&   (1 \leq i \leq d-1).
\end{align*}
So \eqref{eq:cond} holds.
The result follows.
\end{proof}

\begin{lemma}    \label{lem:vanish}   \samepage
\ifDRAFT {\rm lem:vanish}. \fi
Let $X \in {\cal X}$ such that
\begin{align*}
  X E'_d &= 0,  &
  X A E'_d &= 0, &
  E''_d X &= 0.
\end{align*}
Then $X=0$.
\end{lemma}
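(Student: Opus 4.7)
I would work in an $(A,B)$-basis for $V$ and identify all elements of $\text{End}(V)$ with their matrices in this basis, so that $X$ is tridiagonal as a matrix. Propositions \ref{prop:Edrij} and \ref{prop:Eddrij} then imply that $E'_d$ has its only nonzero entries in column $0$, namely $(E'_d)_{k,0}=\mu_k$, and $E''_d$ has its only nonzero entries in row $0$, namely $(E''_d)_{0,k}=\alpha_k$, where I abbreviate $\mu_k:=\alpha_k\vphi_1\cdots\vphi_k$ (using $\alpha''_k=\alpha'_k=\alpha_k$ from Lemma \ref{lem:nonbipartitenormalized}, and $\mu_0=1$, $\mu_k=0$ for $k>d$). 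Since $A$ acts as the shift $v_k\mapsto v_{k-1}$ in this basis (Lemma \ref{lem:entriesABC}), $AE'_d$ is obtained from $E'_d$ by sliding column $0$ up one step: $(AE'_d)_{k,0}=\mu_{k+1}$ for $k\le d-1$, and $(AE'_d)_{d,0}=0$.

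The conditions $XE'_d=0$ and $XAE'_d=0$, written row by row using tridiagonality of $X$, give for each $i$ the pair
\begin{align*}
\mu_{i-1}X_{i,i-1}+\mu_i X_{i,i}+\mu_{i+1}X_{i,i+1}&=0,\\
\mu_i X_{i,i-1}+\mu_{i+1}X_{i,i}+\mu_{i+2}X_{i,i+1}&=0.
\end{align*}
The $2\times 2$ minor $\Delta_i:=\mu_{i-1}\mu_{i+1}-\mu_i^2$ of this $2\times 3$ coefficient matrix factors as a nonzero scalar multiple of $\alpha_{i-1}\alpha_{i+1}\vphi_{i+1}-\alpha_i^2\vphi_i$, which is nonzero for $1\le i\le d-1$ by Lemma \ref{lem:cond}(ii). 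In the boundary rows $i=0$ and $i=d$, the corresponding $2\times 2$ systems have determinants $\mu_2-\mu_1^2$ and $-\mu_d^2$ respectively, both nonzero, so rows $0$ and $d$ of $X$ are entirely zero. For each middle row $1\le i\le d-1$ the system has rank $2$, with $1$-dimensional solution spanned by $w_i=(\Delta_{i+1},\ \mu_i\mu_{i+1}-\mu_{i-1}\mu_{i+2},\ \Delta_i)$; I write $(X_{i,i-1},X_{i,i},X_{i,i+1})=\lambda_i w_i$ for scalars $\lambda_i$, setting $\lambda_0=\lambda_d:=0$ to reflect the boundary.

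The remaining condition $E''_dX=0$, read column by column via $(E''_d)_{0,k}=\alpha_k$ and tridiagonality, gives for each $j$ the relation $\alpha_{j-1}X_{j-1,j}+\alpha_j X_{j,j}+\alpha_{j+1}X_{j+1,j}=0$, which becomes a three-term relation in $\lambda_{j-1},\lambda_j,\lambda_{j+1}$ upon substituting the parametrization. At $j=0$ the only surviving term is $\alpha_1\lambda_1\Delta_2$, forcing $\lambda_1=0$: here $\Delta_2\ne 0$ by Lemma \ref{lem:cond}(ii) if $d\ge 3$, and by direct inspection $\Delta_2=-\mu_2^2\ne 0$ if $d=2$. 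An induction on $k$ then shows that whenever $\lambda_{k-1}=\lambda_k=0$, the equation at $j=k$ collapses to $\alpha_{k+1}\lambda_{k+1}\Delta_{k+2}=0$ and forces $\lambda_{k+1}=0$, using Lemma \ref{lem:cond}(ii) for $k+2\le d-1$ and the direct identity $\Delta_d=-\mu_d^2\ne 0$ at the top. Hence every $\lambda_i$ vanishes and $X=0$.

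The only real obstacle is boundary bookkeeping: the determinants $\Delta_2$ (when $d=2$) and $\Delta_d$ fall outside the range covered by Lemma \ref{lem:cond}(ii), but both are handled by the observation that $\mu_k=0$ for $k>d$, which forces $\Delta_k=-\mu_k^2$ whenever $k\ge d$. With that caveat, the argument is a clean cascade driven by the nonvanishing of the $2\times 2$ minors guaranteed by Lemma \ref{lem:cond}.
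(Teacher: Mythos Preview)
Your argument is correct and follows essentially the same approach as the paper: work in an $(A,B)$-basis, extract the linear equations on the tridiagonal entries of $X$ from the three hypotheses, and solve them inductively using the nonvanishing supplied by Lemma~\ref{lem:cond}(ii). Your organization is somewhat cleaner than the paper's: instead of eliminating the subdiagonal entries $z_i$ between the equations coming from $XE'_d=0$, $XAE'_d=0$, and $E''_dX=0$, you first use the two conditions $XE'_d=0$ and $XAE'_d=0$ to reduce each middle row to a single scalar $\lambda_i$, then propagate $\lambda_i=0$ via the third condition; but the underlying linear algebra and the role of Lemma~\ref{lem:cond}(ii) are identical.
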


\begin{proof}
Note that $\alpha_i \neq 0$ for $0 \leq i \leq d$ by Lemma \ref{lem:cond}(i).
Fix an $(A,B)$-basis $\{v_i\}_{i=0}^d$ for $V$.
We identify each element of $\text{End}(V)$ with the matrix that represent it with respect to $\{v_i\}_{i=0}^d$.
By Propositions \ref{prop:Edrij}, \ref{prop:Eddrij}, and Lemmas \ref{lem:13.46}, \ref{lem:nonbipartitenormalized},
\begin{align*}
 (E'_d)_{i,0} &= \alpha_i \vphi_1 \cdots \vphi_i     && (0 \leq i \leq d),  
\\
 (E''_d)_{0,j} &=   \alpha_j     && (0 \leq j \leq d). 
\end{align*}
Observe that $X$ is tridiagonal since $X \in {\cal X}$.
For $0 \leq i \leq d$ let $x_i$ be the $(i,i)$-entry of $X$.
For $1 \leq i \leq d$ let $y_{i-1}$ be the $(i-1,i)$-entry of $X$,
and let $z_i$ be the $(i,i-1)$-entry of $X$:
\[
 X = 
 \begin{pmatrix}
  x_0 & y_0 &  & & & \text{\bf 0}  \\
  z_1 & x_1 & y_1 \\
       & z_2 & \cdot & \cdot  \\
       &      & \cdot & \cdot & \cdot  \\
       &      &          & \cdot & \cdot & y_{d-1} \\
  \text{\bf 0}   &      &          &          & z_d & x_d
 \end{pmatrix}.
\]
For $0 \leq i \leq d$, compute the $(i,0)$-entry of $X E'_d$ to find
\begin{align}
  0 &= \alpha_0 x_0 + \alpha_1 \vphi_1 y_0,                   \label{eq:10}
\\
  0 &= \alpha_i x_i + \alpha_{i+1} \vphi_{i+1} y_i + \alpha_{i-1} \vphi_i^{-1} z_i
                     && (1 \leq i \leq d-1),                          \label{eq:1i}
\\
 0 &= \alpha_d x_d + \alpha_{d-1} \vphi_d^{-1} z_d.          \label{eq:1d}
\end{align}
By Lemma \ref{lem:entriesABC}, for $0 \leq i,j \leq d$,
$A_{i,j}=1$ if $j=i+1$ and $A_{i,j}=0$ if $j \neq i+1$.
Using this, for $0 \leq i \leq d$, compute the $(i,0)$-entry of $X A E'_d$ to find
\begin{align}
  0 &= \alpha_1 x_0 + \alpha_2 \vphi_2 y_0,                    \label{eq:20}
\\
  0 &= \alpha_{i+1} x_i + \alpha_{i+2} \vphi_{i+2} y_i + \alpha_i \vphi_{i+1}^{-1} z_i 
                 && (1 \leq i \leq d-2),                               \label{eq:2i}
\\
 0 &= \alpha_d x_{d-1} + \alpha_{d-1} \vphi_d^{-1} z_{d-1},    \label{eq:2i-1}
\\
 0 &= \alpha_d z_d.                                                       \label{eq:2d}
\end{align}
For $0 \leq i \leq d-2$,
compute the $(0,i)$-entry of $E''_d X$ to find
\begin{align}
 0 &= \alpha_0 x_0 + \alpha_1 z_1,                                  \label{eq:30}
\\
 0 &= \alpha_i x_i + \alpha_{i-1} y_{i-1} + \alpha_{i+1} z_{i+1} 
         && (1 \leq i \leq d-2).                                        \label{eq:3i}
\end{align}
In \eqref{eq:1i} for $i=1$ and \eqref{eq:30}, eliminate $z_1$ to get
\begin{equation}
 \alpha_1 x_1 + \alpha_2 \vphi_2 y_1 - \alpha_0^2 \alpha_1^{-1} \vphi_1^{-1} x_0  = 0.
                                    \label{eq:s1}
\end{equation}
In \eqref{eq:1i} and \eqref{eq:3i} with $i \rightarrow i-1$, eliminate $z_i$ to get
\begin{align}
&
\begin{matrix}
  \alpha_i x_i  + \alpha_{i+1} \vphi_{i+1} y_i   - \alpha_{i-1}^2 \alpha_i^{-1} \vphi_i^{-1} x_{i-1}
\\
  \qquad\qquad\qquad\qquad\qquad\qquad
 - \alpha_{i-2} \alpha_{i-1} \alpha_i^{-1} \vphi_i^{-1} y_{i-2}
  = 0
\end{matrix}
     &&   (2 \leq i \leq d-1).               \label{eq:ss1}
\end{align}
In \eqref{eq:2i} for $i=1$ and \eqref{eq:30}, eliminate $z_1$ to get
\begin{equation}
  \alpha_2 x_1 + \alpha_3 \vphi_3 y_1    - \alpha_0 \vphi_2^{-1} x_0  = 0.   \label{eq:s2}
\end{equation}
In \eqref{eq:2i} and \eqref{eq:3i} with $i \rightarrow i-1$, eliminate $z_i$ to get
\begin{align}
      \alpha_{i+1} x_i  + \alpha_{i+2} \vphi_{i+2} y_i 
  - \alpha_{i-1} \vphi_{i+1}^{-1} x_{i-1} - \alpha_{i-2} \vphi_{i+1}^{-1} y_{i-2} &= 0      
                                           && (2 \leq i \leq d-2).             \label{eq:ss2}
\end{align}
We show that $x_i=0$ and $y_i=0$ for $0 \leq i \leq d-2$ using induction on $i$.
By Lemma \ref{lem:cond}(ii) $\alpha_1^2 \vphi_1 - \alpha_0 \alpha_2 \vphi_2 \neq 0$.
By this and  \eqref{eq:10}, \eqref{eq:20} we get $x_0 =0$ and $y_0=0$.
By this and \eqref{eq:s1}, \eqref{eq:s2},
\begin{align}
  \alpha_1 x_1 + \alpha_2 \vphi_2 y_1 &= 0,    \label{eq:aux1}
\\
 \alpha_2 x_1 + \alpha_3 \vphi_3 y_1 &= 0.     \label{eq:aux2}
\end{align}
By Lemma \ref{lem:cond}(ii) $\alpha_2^2 \vphi_2 - \alpha_1 \alpha_3 \vphi_3 \neq 0$.
By this and \eqref{eq:aux1}, \eqref{eq:aux2} we get $x_1=0$ and $y_1 = 0$.
Assume $2 \leq i \leq d-2$ and $x_j =0$, $y_j=0$ for $j < i$.
By  \eqref{eq:ss1} and \eqref{eq:ss2},
\begin{align}
  \alpha_i x_i + \alpha_{i+1} \vphi_{i+1} y_i &= 0,            \label{eq:aux1d}
\\
 \alpha_{i+1} x_i + \alpha_{i+2} \vphi_{i+2} y_i &= 0.         \label{eq:aux2d}
\end{align}
By Lemma \ref{lem:cond}(ii) $\alpha_{i+1}^2 \vphi_{i+1} - \alpha_i \alpha_{i+2} \vphi_{i+2} \neq 0$.
By this and \eqref{eq:aux1d}, \eqref{eq:aux2d} we get $x_i=0$ and $y_i=0$.
We have shown that $x_i=0$ and $y_i=0$ for $0 \leq i \leq d-2$.
By this and \eqref{eq:30}, \eqref{eq:3i} we get $z_i=0$ for $1 \leq i \leq d-1$.
By \eqref{eq:2i-1} we get $x_{d-1}=0$.
By \eqref{eq:1i} for $i=d-1$ we get $y_{d-1}=0$.
By \eqref{eq:2d} we get $z_d=0$.
By \eqref{eq:1d} we get $x_d=0$.
We have shown that $x_i = 0$ $(0 \leq i \leq d)$, $y_i = 0$ $(0 \leq i \leq d-1)$ and $z_i =0$ $(1 \leq i \leq d)$.
Thus $X=0$.
\end{proof}

\begin{lemma}   \label{lem:image}    \samepage
\ifDRAFT {\rm lem:image}. \fi
We have
\begin{align*}
 \dim {\cal X} E'_d & \leq 2,  &
 \dim E''_d {\cal X} & \leq 2,  &
 \dim {\cal X} A E'_d & \leq 3.
\end{align*}
\end{lemma}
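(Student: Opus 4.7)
The plan is to exploit the rank-one nature of the idempotents $E'_d$ and $E''_d$ together with the tridiagonality of $X \in \mathcal{X}$ with respect to the $(B,C)$- and $(C,A)$-decompositions. In each case I would pin the operator down by its action on (or its restriction to) a very small subspace and then use tridiagonality to confine the relevant image to a subspace of the claimed dimension.

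For $\dim \mathcal{X} E'_d \leq 2$, let $\{V'_i\}_{i=0}^d$ be the $(B,C)$-decomposition and fix a nonzero $v \in V'_d$. Since $E'_d$ is the projection onto $V'_d$ along $\sum_{i<d} V'_i$, the operator $X E'_d$ is completely determined by the single vector $Xv$. Hence $\mathcal{X} E'_d$ embeds linearly into $V$ via $X E'_d \mapsto Xv$, and the image is contained in $X V'_d \subseteq V'_{d-1} + V'_d$ by tridiagonality of $X$ with respect to $\{V'_i\}_{i=0}^d$. This target space is $2$-dimensional, which gives the first bound. For $\dim \mathcal{X} A E'_d \leq 3$, the same embedding sends $X A E'_d \mapsto X A v$; by Lemma \ref{lem:action}, $A$ is tridiagonal with respect to $\{V'_i\}_{i=0}^d$, so $A v \in V'_{d-1} + V'_d$, and then tridiagonality of $X$ pushes $X A v$ into $V'_{d-2} + V'_{d-1} + V'_d$, which is $3$-dimensional.

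For $\dim E''_d \mathcal{X} \leq 2$, work instead with the $(C,A)$-decomposition $\{V''_i\}_{i=0}^d$. Tridiagonality of $X$ gives $X V''_i \subseteq V''_{i-1} + V''_i + V''_{i+1}$, while $E''_d$ annihilates every $V''_i$ with $i \neq d$. Consequently $E''_d X$ vanishes on $V''_0 + \cdots + V''_{d-2}$, so $E''_d X$ is determined by its restriction to the $2$-dimensional subspace $V''_{d-1} + V''_d$, and its image lies in the $1$-dimensional subspace $V''_d$. The space of such linear maps has dimension at most $2$, giving the third bound.

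There is no substantial obstacle: all three inequalities come from the single elementary observation that multiplication of a tridiagonal operator by a rank-one idempotent concentrated at the top of one of the three decompositions has a tightly constrained range of nonvanishing behavior. The only care needed is in keeping straight which decomposition to use for each product, and in using Lemma \ref{lem:action} for the mixed term $X A E'_d$ to justify that $A$ itself is tridiagonal with respect to $\{V'_i\}_{i=0}^d$.
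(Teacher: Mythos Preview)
Your proposal is correct and follows essentially the same approach as the paper: both arguments exploit that $E'_d$ and $E''_d$ are rank-one projections onto the top component of the $(B,C)$- and $(C,A)$-decompositions, respectively, and then use tridiagonality of $X$ (and of $A$, via Lemma~\ref{lem:action}/\ref{lem:trid}) with respect to the relevant decomposition to confine the nonzero part to a small space. The only difference is cosmetic: the paper phrases everything in terms of the idempotent decomposition $\mathcal{X}E'_d \subseteq E'_{d-1}\,\text{End}(V)\,E'_d + E'_d\,\text{End}(V)\,E'_d$ (and analogously for the other two), whereas you evaluate at a spanning vector $v\in V'_d$ and bound the image inside $V'_{d-1}+V'_d$ (respectively $V'_{d-2}+V'_{d-1}+V'_d$), and dualize for $E''_d\mathcal{X}$.
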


\begin{proof}
Abbreviate ${\cal A} = \text{End}(V)$.
Observe that  
$\dim E'_i {\cal A} E'_j = 1$ for $0 \leq i,j \leq d$.
By the definition of $\cal X$, 
$E'_r {\cal X} E'_d = 0$ for $0 \leq r \leq d-2$.
Using this we argue
\[
 {\cal X} E'_d 
 = I {\cal X} E'_d
 = \sum_{r=0}^d E'_r X E'_d
= E'_{d-1} {\cal X} E'_d + E'_d {\cal X} E'_d.
\]
So
${\cal X} E'_d \subseteq E'_{d-1} {\cal A} E'_d + E'_d {\cal A} E'_d$,
and therefore $\dim {\cal X} E'_d \leq 2$.
Similarly $\dim E''_d {\cal X} \leq 2$.
By Lemma \ref{lem:trid},
$A E'_d = E'_{d-1} A E'_d + E'_d A E'_d$,
so
\[
   A E'_d \in E'_{d-1} {\cal A} E'_d + E'_d {\cal A} E'_d.
\]
Therefore
\[
  {\cal X} A E'_d \subseteq {\cal X} E'_{d-1} {\cal A} E'_d + {\cal X} E'_d {\cal A} E'_d.
\]
By this and the definition of $\cal X$,
\[
   {\cal X} A E'_d \subseteq 
    E'_{d-2} {\cal A} E'_d + E'_{d-1} {\cal A} E'_d + E'_d {\cal A} E'_d.
\]
Therefore $\text{dim} {\cal X} A E'_d \leq 3$.
\end{proof}

\begin{lemma}   \label{lem:main2}   \samepage
\ifDRAFT {\rm lem:main2}. \fi
The space $\cal X$ has dimension at most $7$.
\end{lemma}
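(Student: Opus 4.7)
The plan is to combine the two preceding lemmas into a rank-plus-nullity argument. Define the $\F$-linear map
\[
 \psi : {\cal X} \longrightarrow \text{End}(V) \oplus \text{End}(V) \oplus \text{End}(V),
 \qquad
 X \;\longmapsto\; (X E'_d,\; X A E'_d,\; E''_d X).
\]
By Lemma \ref{lem:vanish}, an element $X \in {\cal X}$ with $XE'_d = 0$, $XAE'_d = 0$ and $E''_d X = 0$ must vanish; in other words, $\Ker \psi = 0$, so $\psi$ is injective.

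Next I would bound the image. Since the three components of $\psi(X)$ lie in ${\cal X}E'_d$, ${\cal X}AE'_d$ and $E''_d{\cal X}$ respectively, the image of $\psi$ is contained in ${\cal X}E'_d \oplus {\cal X}AE'_d \oplus E''_d{\cal X}$, whose dimension is at most
\[
 \dim {\cal X} E'_d \;+\; \dim {\cal X} A E'_d \;+\; \dim E''_d {\cal X} \;\leq\; 2 + 3 + 2 \;=\; 7
\]
by Lemma \ref{lem:image}. Combining injectivity of $\psi$ with this upper bound yields $\dim {\cal X} \leq 7$, as desired.

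No further obstacle is anticipated: the real work has already been done in Lemmas \ref{lem:vanish} and \ref{lem:image}, and what remains is only to package them into a single injectivity-plus-rank-bound statement.
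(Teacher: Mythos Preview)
Your proof is correct and essentially identical to the paper's: the paper defines the three maps $\pi_1(X)=XE'_d$, $\pi_2(X)=E''_dX$, $\pi_3(X)=XAE'_d$ separately, bounds the codimension of $K_1\cap K_2\cap K_3$ by $2+2+3=7$ via Lemma~\ref{lem:image}, and uses Lemma~\ref{lem:vanish} to conclude this intersection is zero. Packaging the three maps into the single injection $\psi$ is a cosmetic variation of the same argument.
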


\begin{proof}
Define linear maps
$\pi_1 : {\cal X} \to {\cal X} E'_d$ that sends $X \in {\cal X}$ to $X E'_d$,
$\pi_2 : {\cal X} \to E''_d {\cal X}$ that sends $X \in {\cal X}$ to  $E''_d X$,
$\pi_3 : {\cal X} \to {\cal X} A E'_{d}$ that sends $X \in {\cal X}$ to $X A E'_{d}$.
For $i=1,2,3$ let $K_i$ be the kernel of $\pi_i$.
By Lemma \ref{lem:image} the image of $\pi_1$ (resp.\ $\pi_2$) (resp.\ $\pi_3$)
has dimension at most $2$ (resp.\ $2$) (resp.\ $3$).
Therefore $K_1$ (resp.\ $K_2$) (resp.\ $K_3$) has codimension at most $2$
(resp.\ $2$) (resp.\ $3$).
By these comments the space $K_1 \cap K_2 \cap K_3$ has codimension at most $7$.
On the other hand, by Lemma \ref{lem:vanish} $K_1 \cap K_2 \cap K_3 = 0$.
Therefore $\cal X$ has  dimension at most $7$.
\end{proof}

\section{Proof of Theorem \ref{thm:main2}}
\label{sec:proofnonbipartite}

Let $A,B,C$ be an LR triple on $V$ 
with parameter array \eqref{eq:parray},
Toeplitz data \eqref{eq:Toeplitzdata}, trace data \eqref{eq:tracedata},
and idempotent data \eqref{eq:idempotentdata}.

\begin{lemma}  {\rm (See \cite[Proposition 14.1]{T:LRT}.) }   \samepage
\label{lem:ai}  
\ifDRAFT {\rm lem:ai}. \fi
For $0 \leq i \leq d$,
\[
  a_{d-i} = \alpha'_0 \beta'_1 \vphi''_i + \alpha'_1 \beta'_0 \vphi''_{i+1}.
\]
\end{lemma}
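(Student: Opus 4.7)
The plan is to evaluate $a_{d-i} = \tr(C E_{d-i})$ directly in the $(C,A)$-basis of $V$, where both factors admit particularly clean matrix representations. Since trace is basis-independent, this reduces the problem to matrix-entry bookkeeping.

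The two ingredients are as follows. First, since the $(C,A)$-decomposition is lowered by $C$, a $(C,A)$-basis $\{v_m\}_{m=0}^d$ satisfies $C v_m = v_{m-1}$; hence the matrix representing $C$ in this basis has entries $C_{m,m+1} = 1$ for $0 \leq m \leq d-1$ and all other entries zero. Second, applying Proposition \ref{prop:Erij} with $r = d-i$ gives the matrix of $E_{d-i}$ in the $(C,A)$-basis:
\[
(E_{d-i})_{j,k} = \alpha'_{j-i}\, \beta'_{i-k}\, \vphi''_{k+1} \cdots \vphi''_{j} \quad \text{when $k \leq i \leq j$,}
\]
and zero otherwise.

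Expanding $\tr(C E_{d-i}) = \sum_{m,n} C_{m,n} (E_{d-i})_{n,m}$ and using that only $n = m+1$ contributes, the sum collapses to $\sum_m (E_{d-i})_{m+1, m}$. The support condition $m \leq i \leq m+1$ leaves only $m = i-1$ and $m = i$, and the two surviving entries come out to
\[
(E_{d-i})_{i,\,i-1} = \alpha'_0 \beta'_1 \vphi''_i, \qquad (E_{d-i})_{i+1,\,i} = \alpha'_1 \beta'_0 \vphi''_{i+1},
\]
whose sum is the claimed identity. The boundary cases $i = 0$ and $i = d$ pose no difficulty: the conventions $\vphi''_0 = \vphi''_{d+1} = 0$ cause the out-of-range term to vanish automatically. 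I do not foresee any real obstacle here; the whole argument is a direct entry-level calculation made transparent by choosing the $(C,A)$-basis, which trivializes the action of $C$ and lets Proposition \ref{prop:Erij} supply all the nontrivial content.
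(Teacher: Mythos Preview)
Your argument is correct. Working in the $(C,A)$-basis is the natural choice: it makes $C$ a pure shift, and Proposition~\ref{prop:Erij} supplies the entries of $E_{d-i}$ exactly as you quote them. The trace computation then collapses to the two subdiagonal entries $(E_{d-i})_{i,i-1}$ and $(E_{d-i})_{i+1,i}$, and the boundary cases are handled by the conventions $\vphi''_0 = \vphi''_{d+1} = 0$.

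As for comparison with the paper: the paper does not prove this lemma at all, but simply cites it as \cite[Proposition~14.1]{T:LRT}. Your proof is therefore a genuine addition --- a self-contained derivation using only the idempotent-entry formulas already developed in this paper (Proposition~\ref{prop:Erij}), rather than an appeal to the external reference. This is a clean way to make the paper more self-contained at essentially no cost.
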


Assume $A,B,C$ is nonbipartite.
In view of Corollary \ref{cor:invariant}, assume that $A,B,C$ is normalized.
By Lemma \ref{lem:nonbipartitenormalized}
\begin{align}
  \alpha_i &= \alpha'_i = \alpha''_i && (0 \leq i \leq d),     \label{eq:equitable1}
\\
  \vphi_i &= \vphi'_i = \vphi''_i  &&  (1 \leq i \leq d).        \label{eq:equitable2}
\end{align}

\begin{lemma}   \label{lem:ai2}    \samepage
\ifDRAFT {\rm lem:ai2}. \fi
For $0 \leq i \leq d$,
\[
  a_i = \vphi_{d-i+1}- \vphi_{d-i}.
\]
\end{lemma}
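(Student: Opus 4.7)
The plan is to derive this identity directly from Lemma \ref{lem:ai} by specializing all the scalars using the normalization assumption.

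First, I would apply Lemma \ref{lem:ai}, which states $a_{d-i} = \alpha'_0 \beta'_1 \vphi''_i + \alpha'_1 \beta'_0 \vphi''_{i+1}$ for $0 \leq i \leq d$. This is the only nontrivial input; everything else is bookkeeping with the normalization conventions.

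Next, I would collect the values of the Toeplitz parameters in play. By Lemma \ref{lem:13.46} we always have $\alpha'_0 = 1$ and $\beta'_0 = 1$, and moreover $\beta'_1 = -\alpha'_1$. Since $A,B,C$ is assumed normalized in this section (the section begins by reducing to the normalized case via Corollary \ref{cor:invariant}), Definition \ref{def:normalized1} gives $\alpha'_1 = 1$, hence $\beta'_1 = -1$. Substituting these values into the formula from Lemma \ref{lem:ai} yields
\[
  a_{d-i} = -\vphi''_i + \vphi''_{i+1} \qquad (0 \leq i \leq d).
\]

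Then I would invoke Lemma \ref{lem:nonbipartitenormalized} (restated as \eqref{eq:equitable2}) to replace $\vphi''_j$ by $\vphi_j$ throughout, obtaining $a_{d-i} = \vphi_{i+1} - \vphi_i$. Finally, reindexing by $i \mapsto d-i$ gives $a_i = \vphi_{d-i+1} - \vphi_{d-i}$ for $0 \leq i \leq d$, with the boundary cases $i=0$ and $i=d$ handled by the conventions $\vphi_0 = 0 = \vphi_{d+1}$. There is no real obstacle here: the work was done in establishing Lemma \ref{lem:ai} and the equitability relations; this lemma is a one-line specialization.
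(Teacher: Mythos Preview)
Your proof is correct and follows essentially the same route as the paper: apply Lemma~\ref{lem:ai}, use the equitability relations \eqref{eq:equitable1}, \eqref{eq:equitable2} together with Lemma~\ref{lem:13.46} and the normalization $\alpha'_1=1$ to reduce the coefficients, then reindex. Your write-up is in fact slightly more careful about the reindexing $i\mapsto d-i$ than the paper's displayed intermediate formula.
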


\begin{proof}
By Lemma \ref{lem:ai} and \eqref{eq:equitable1}, \eqref{eq:equitable2},
\[
  a_i = \alpha_0 \beta_1 \vphi_i + \alpha_1 \beta_0 \vphi_{i+1}.
\]
By Lemma \ref{lem:13.46} $\alpha_0 = 1$ and $\beta_0 = 1$.
By Definition \ref{def:normalized1} and Lemma \ref{lem:13.46},
$\alpha_1 = 1$ and $\beta_1 = -1$.
Now the result follows from these comments.
\end{proof}

\begin{proofof}{Theorem \ref{thm:main2}}
In view of Corollary \ref{cor:invariant} we may assume that
$A,B,C$ is normalized. So $A,B,C$ is one of 
$\text{\rm NBG}_d(\F;q)$, $\text{\rm NBG}_d(\F;1)$, $\text{\rm NBNG}_d(\F;t)$.

First assume $d \geq 3$.
By Lemma \ref{lem:main2}, it suffices to show that the elements \eqref{eq:basis}
are linearly independent.
For scalars $e$, $f_1$, $f_2$, $f_3$, $g_1$, $g_2$, $g_3 \in \F$, define
\begin{equation}          \label{eq:defY}
 Y = e I + f_1 A + f_2 B + f_3 C + g_1 ABC + g_2 ACB + g_3 CAB.
\end{equation}
Assume $Y=0$, and we show that  $e$, $f_1$, $f_2$, $f_3$, $g_1$, $g_2$, $g_3$ are all $0$.
Fix an $(A,B)$-basis $\{v_i\}_{i=0}^d$ for $V$.
We identify each element of $\text{End}(V)$ with the matrix representing it with respect to $\{v_i\}_{i=0}^d$.
By Lemma \ref{lem:entriesABC} and \eqref{eq:equitable2},
 each of $A$, $B$, $C$ is tridiagonal with the following entries:
\[
 \begin{array}{ccc|ccc|ccc}
   A_{i,i-1} & A_{i,i} & A_{i-1,i} & B_{i,i-1} & B_{i,i} & B_{i-1,i} & C_{i,i-1} & C_{i,i} & C_{i-1,i}
 \\ \hline
  0 & 0 & 1 & \vphi_i & 0 & 0 & \vphi_{d-i+1} & a_i & \vphi_{d-i+1}/\vphi_i   \rule{0mm}{4mm}
 \end{array}
\]
Using these entries together with Lemma \ref{lem:ai2}, we compute the $(i,j)$-entry of $Y$ 
for $(i,j)=(0,0)$, $(0,1)$, $(1,0)$, $(1,1)$, $(1,2)$, $(2,1)$, $(2,3)$;
this yields a system of linear equations with unkowns  $e$, $f_1$, $f_2$, $f_3$, $g_1$, $g_2$, $g_3$.
The coefficient matrix $M$ is as follows:
\[
 \begin{array}{c|c|c|c|c|c|c|c}
  (i,j) & e & f_1 & f_2 & f_3 & g_1 & g_2 & g_3
 \\ \hline
 (0,0) &  1 & 0 & 0 &  -\vphi_d &
  -\vphi_1 \vphi_d & \vphi_1(\vphi_{d}-\vphi_{d-1}) & -\vphi_1 \vphi_d       \rule{0mm}{4mm}
\\
 (0,1) & 0 & 1 & 0 & \vphi_1^{-1}\vphi_d &
  \vphi_d & \vphi_{d-1} & \vphi_1^{-1}\vphi_2 \vphi_d
\\  \hline
 (1,0) & 0 & 0 & \vphi_1 & \vphi_d &
    \vphi_2 \vphi_d & \vphi_1 \vphi_{d-1} & \vphi_1 \vphi_d         \rule{0mm}{4mm}
\\
 (1,1) & 1 & 0 & 0 & \vphi_d- \vphi_{d-1} &
   \vphi_2 (\vphi_d - \vphi_{d-1}) & \vphi_2 (\vphi_{d-1}-\vphi_{d-2}) & \vphi_2 (\vphi_d-\vphi_{d-1})
\\
 (1,2) & 0 & 1 & 0 & \vphi_2^{-1} \vphi_{d-1} &
   \vphi_{d-1} & \vphi_{d-2} & \vphi_2^{-1} \vphi_3 \vphi_{d-1}
\\ \hline
 (2,1) & 0 & 0 & \vphi_2 & \vphi_{d-1} &
  \vphi_3 \vphi_{d-1} & \vphi_2 \vphi_{d-2} & \vphi_2 \vphi_{d-1}       \rule{0mm}{4mm}
\\
(2,3) & 0 & 1 & 0 & \vphi_3^{-1} \vphi_{d-2} &
   \vphi_{d-2} & \vphi_{d-3} & \vphi_3^{-1} \vphi_4 \vphi_{d-2}
\end{array}
\]
One routinely compute the determinant of $M$ for each of the cases
$\text{\rm NBG}_d(\F;q)$, $\text{\rm NBG}_d(\F;1)$, $\text{\rm NBNG}_d(\F;t)$:
\[
\begin{array}{c|c}
 \text{case} & \text{determinant of $M$} 
\\ \hline
\text{NBG}_d(\F;q) &      \rule{0mm}{8mm}
 \displaystyle
  \frac{(q^2-1)^2 (q^{d-1}-1)(q^d-1)(q^{d+1}+1)^3 }
         {q^{2d+3} (q-1)^4}
\\
\text{NBG}_d(\F;1)            \rule{0mm}{6mm}
&
 32 d(d-1)
\\
 \text{NBNG}_d(\F;t)        \rule{0mm}{6mm}
&
- t^{3(d+2)/2} (t-1)^2 (t^{d/2}-1)(t^{d+1}-1)^3
\end{array}
\]
In each case, the determinant of $M$ is nonzero, so 
 $e$, $f_1$, $f_2$, $f_3$, $g_1$, $g_2$, $g_3$ are all $0$.
The result follows.

Next assume $d=2$.
We proceed in a similar way as above.
By Lemma \ref{lem:dimXd2} it suffices to show that the elements
\eqref{eq:basisd2} are linearly independent.
Define $Y$ as in \eqref{eq:defY} with $g_3=0$.
We compute the $(i,j)$-entry of $Y$ for
 $(i,j)=(0,0)$, $(0,1)$, $(1,0)$, $(1,1)$, $(1,2)$, $(2,1)$;
this yields a system of linear equations with unknowns 
$e$, $f_1$, $f_2$, $f_3$, $g_1$, $g_2$.
The coefficient matrix $M'$ is obtained from $M$ by removing
the last row and the last column,
where we interpret $\vphi_{d-2}=0$ and $\vphi_3=0$.
One routinely compute the determinant of $M'$ for each of the cases
$\text{\rm NBG}_2(\F;q)$, $\text{\rm NBG}_2(\F;1)$:
\[
\begin{array}{c|c}
 \text{case} & \text{determinant of $M'$} 
\\ \hline
\text{NBG}_2(\F;q) &      \rule{0mm}{8mm}
 \displaystyle
  \frac{(q^2-1)^3 (q^3+1)^2 }
         {q^5 (q-1)^3}
\\
\text{NBG}_2(\F;1)            \rule{0mm}{6mm}
&
 32 
\end{array}
\]
In each case, the determinant of $M'$ is nonzero, so 
 $e$, $f_1$, $f_2$, $f_3$, $g_1$, $g_2$ are all $0$.
The result follows.
\end{proofof}

\section{Bipartite LR triples}
\label{sec:bipartite}

In this section, we recall the classification of bipartite LR triples.
Let $A,B,C$ be a bipartite LR triple on $V$ with parameter array \eqref{eq:parray},
Toeplitz data \eqref{eq:Toeplitzdata},
and idempotent data \eqref{eq:idempotentdata}.
To avoid triviality, we assume $d \geq 2$.

\begin{lemma}  {\rm (See \cite[Lemma 16.6]{T:LRT}.) }  \label{lem:alpha2}   \samepage
\ifDRAFT {\rm lem:alpha2}. \fi
The diameter $d$ is even.
Moreover for $0 \leq i \leq d$,
each of $\alpha_i$, $\alpha'_i$, $\alpha''_i$,
$\beta_i$, $\beta'_i$, $\beta''_i$
is zero if $i$ is odd and nonzero if $i$ is even.
\end{lemma}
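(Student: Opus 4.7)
My plan is to exploit the $\Z/2$-grading of $V$ that bipartiteness produces, together with the compatibility of the various LR-type bases with this grading.

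\textbf{Step 1 (The grading).} By Lemma \ref{lem:entriesABC}, the matrix of $C$ with respect to an $(A,B)$-basis has diagonal entries $a_i$. Since $a_i = 0$, the operator $C$ sends $V_i$ into $V_{i-1} + V_{i+1}$ (no $V_i$ component), so if I set
\[
  V^e = \sum_{j=0}^{m} V_{2j}, \qquad V^o = \sum_{j=0}^{m-1} V_{2j+1},
\]
then each of $A$, $B$, $C$ interchanges $V^e$ and $V^o$. By \eqref{eq:defJ} and \eqref{eq:I-J} these subspaces coincide with $JV$ and $(I{-}J)V$ respectively, so the same $\Z/2$-grading is obtained from the $(B,C)$- and $(C,A)$-decompositions; in particular all three maps are \emph{odd} with respect to a canonical grading of $V$.

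\textbf{Step 2 (Parity of the twelve bases).} A $(C,B)$-basis $\{v_i\}_{i=0}^d$ has $v_0 \in V''_0 \subseteq JV = V^e$. Because $B$ lowers the $(C,B)$-decomposition and is an odd operator, I obtain by induction that $v_i \in V^e$ when $i$ is even and $v_i \in V^o$ when $i$ is odd. The same reasoning gives this parity property for any $(C,A)$-basis $\{u_i\}_{i=0}^d$, and, mutatis mutandis, for each of the twelve bases \eqref{eq:12bases}.

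\textbf{Step 3 (Odd Toeplitz parameters vanish).} Let $T$ be the transition matrix from a $(C,B)$-basis $\{v_i\}$ to a compatible $(C,A)$-basis $\{u_j\}$, so that $u_j = \sum_i T_{i,j} v_i$. Step 2 tells me that $u_j$ and $v_i$ have opposite parity whenever $j - i$ is odd; by linear independence of the even and odd components, $T_{i,j} = 0$ in that case. Since $T$ is upper triangular Toeplitz with parameters $\{\alpha_k\}_{k=0}^d$ (Lemma \ref{lem:Toeplitz}), this forces $\alpha_k = 0$ for odd $k$. The same argument applied to the LR triples $B,C,A$ and $C,A,B$ yields $\alpha'_k = \alpha''_k = 0$ for odd $k$. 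Because $T^{-1}$ is also upper triangular Toeplitz, with parameters $\{\beta_k\}_{k=0}^d$, the analogous statement $\beta_k = \beta'_k = \beta''_k = 0$ for odd $k$ is immediate.

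\textbf{Step 4 (Even Toeplitz parameters are nonzero).} This is the main obstacle: the $\Z/2$-grading shows only that the odd-indexed parameters vanish, not that the even-indexed ones are nonzero. To obtain the non-vanishing I would express $\alpha_{2k}$ as a polynomial in the parameter array $(\{\vphi_i\}, \{\vphi'_i\}, \{\vphi''_i\})$ by inverting, degree by degree, the triangular system that relates the entries of $C$ in an $(A,B)$-basis (encoded by Lemma \ref{lem:entriesABC} together with the change of basis from the $(A,B)$-basis to a $(C,B)$-basis in Lemma \ref{lem:transAB}) to the Toeplitz parameters. Under the bipartite hypothesis the odd rows of that system drop out, leaving a triangular system of even order whose diagonal coefficients are nonzero products of the $\vphi_i$'s; since $\vphi_i \ne 0$ for $1 \le i \le d$, each $\alpha_{2k}$ is forced to be a nonzero rational function of $\{\vphi_i\}$. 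The analogous treatment of $\alpha'$, $\alpha''$ and of $T^{-1}$ for $\beta, \beta', \beta''$ completes the argument. I would execute this bookkeeping using the recursions already recorded in \cite[Section 13]{T:LRT} rather than re-deriving them here; that reference is also the source of the finer statement that $d$ must be even, which I would invoke at the outset to make the definition of $V^e$ in Step 1 consistent.
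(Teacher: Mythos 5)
The paper does not prove this lemma at all: it is imported verbatim from \cite[Lemma 16.6]{T:LRT}, so there is no internal proof to compare against. Judged on its own terms, your proposal contains two genuine gaps, one of them a logical flaw rather than an omission.

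First, the statement ``$d$ is even'' is part of the lemma, and you never prove it --- you explicitly defer it to the reference ``to make the definition of $V^e$ consistent.'' Worse, your Step 1 identifies $V^e$ with $JV$ via \eqref{eq:defJ}, i.e.\ via \cite[Lemma 16.12]{T:LRT}, a statement that already presupposes $d$ even and that, in the source, sits downstream of the very lemma you are proving. So as a self-contained argument the grading setup is circular. (A standard non-circular route: the parity argument of your Step 3 shows $\alpha_d=0$ if $d$ is odd, while $\alpha_d\neq 0$ holds for every LR triple by the results of \cite[Section 13]{T:LRT}; the contradiction forces $d$ even. You would need something of this shape, not a citation.) There is also a small slip in Step 2: the vector $v_0$ of a $(C,B)$-basis lies in $V'_d$, the $d$-th component of the $(B,C)$-decomposition, not in $V''_0$; the conclusion $v_0\in JV$ survives only because $d$ is even.

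Second, and more seriously, Step 4 does not establish that the even-indexed parameters are nonzero, and the inference you sketch is invalid. Inverting a triangular system with nonzero diagonal entries shows that the $\alpha_{2k}$ are \emph{uniquely determined} by the $\vphi_i$; it does not show they are nonzero, and ``a nonzero rational function of the $\vphi_i$'' can perfectly well evaluate to zero at the particular parameter values at hand. What actually closes this gap is the three-term recurrence satisfied by the Toeplitz parameters (see \cite[Section 14]{T:LRT}, e.g.\ the relations underlying Lemma \ref{lem:ai} and Lemma \ref{lem:prop14.6}): in the bipartite case the terms carrying $a_i$ vanish, leaving a two-term recurrence of the form $\alpha_{i+1}=c_i\,\alpha_{i-1}$ with $c_i$ a nonzero ratio of $\vphi$'s, whence $\alpha_{2k}\neq 0$ follows by induction from $\alpha_0=1$ (Lemma \ref{lem:13.46}), and similarly for the primed, double-primed, and $\beta$ families. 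Your Steps 1--3 give a clean and correct mechanism for the vanishing of the odd-indexed parameters, but without the evenness of $d$ and the nonvanishing argument the lemma is not proved.
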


By Lemma \ref{lem:alpha2} $d$ is even; set $m=d/2$.

\begin{lemma}  {\rm (See \cite[Lemma 16.12]{T:LRT}.) }  \label{lem:J}  \samepage
\ifDRAFT {\rm lem:J} \fi
\begin{itemize}
\item[\rm (i)]
The following spaces are equal:
\begin{align}
  & \sum_{j=0}^m E_{2j} V,   &
  & \sum_{j=0}^m E'_{2j} V,   &
  & \sum_{j=0}^m E''_{2j} V.                 \label{eq:Vout}
\end{align}
\item[\rm (ii)]
The following spaces are equal:
\begin{align}
  & \sum_{j=0}^{m-1} E_{2j+1} V,   &
  & \sum_{j=0}^{m-1} E'_{2j+1} V,   &
  & \sum_{j=0}^{m-1} E''_{2j+1} V.                 \label{eq:Vin}
\end{align}
\end{itemize}
\end{lemma}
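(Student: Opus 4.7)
The plan is to fix an $(A,B)$-basis $\{v_i\}_{i=0}^d$ for $V$ and then exploit a parity argument using the matrix representations of the primed and double-primed idempotents computed in Section \ref{sec:idempotents}, combined with the bipartite vanishing in Lemma \ref{lem:alpha2}. At the outset I would observe that $E_r V = \F v_r$, so the right-most space $\sum_{j=0}^m E_{2j}V$ in (i) is simply $\text{span}\{v_0, v_2, \ldots, v_d\}$ of dimension $m+1$, while $\sum_{j=0}^{m-1}E_{2j+1}V$ in (ii) is $\text{span}\{v_1, v_3, \ldots, v_{d-1}\}$ of dimension $m$; these two spaces direct-sum to $V$.

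For part (i), I would invoke Proposition \ref{prop:Edrij} to write the matrix of $E'_r$ in the $(A,B)$-basis, whose $(i,j)$-entry is proportional to $\alpha''_{r-d+i}\beta''_{d-r-j}$ whenever $j \leq d-r \leq i$ and is $0$ otherwise. Since $d$ is even (Lemma \ref{lem:alpha2}), when $r$ is even a nonzero entry would require $r-d+i$ and $d-r-j$ to both be even, which by Lemma \ref{lem:alpha2} forces $i$ and $j$ to both be even. Consequently every column of $E'_{2j}$ lies in $\text{span}\{v_0, v_2, \ldots, v_d\}$, so
\[
  \sum_{j=0}^m E'_{2j}V \;\subseteq\; \sum_{j=0}^m E_{2j}V.
\]
The left-hand side is a direct sum of the $m+1$ one-dimensional subspaces $V'_{2j}$, hence has dimension $m+1$, and the inclusion must therefore be an equality. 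I would then repeat the same argument using Proposition \ref{prop:Eddrij} (with $\alpha'$, $\beta'$ in place of $\alpha''$, $\beta''$) to obtain $\sum_{j=0}^m E''_{2j}V = \sum_{j=0}^m E_{2j}V$, and part (i) follows by transitivity.

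For part (ii) I would apply the identical parity argument to odd $r$: the nonzero entries of $E'_r$ and of $E''_r$ in the $(A,B)$-basis occur only at positions $(i,j)$ with $i$ and $j$ both odd, so $\sum_{j=0}^{m-1}E'_{2j+1}V$ and $\sum_{j=0}^{m-1}E''_{2j+1}V$ are both contained in $\text{span}\{v_1, v_3, \ldots, v_{d-1}\} = \sum_{j=0}^{m-1}E_{2j+1}V$, and the dimensions ($m$ on each side) match. The only substantive step is the parity bookkeeping with the Toeplitz parameters and the constraint $j\le d-r\le i$; once that is correctly set up, everything reduces to a dimension comparison, so I do not anticipate any serious obstacle.
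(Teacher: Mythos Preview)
Your argument is correct. The parity bookkeeping is exactly right: with $d$ even and $r$ even, the subscripts $r-d+i$ and $d-r-j$ (resp.\ $r-d+j$ and $d-r-i$ for $E''_r$) have the same parity as $i$ and $j$, so by Lemma~\ref{lem:alpha2} the only nonzero entries of $E'_r$ and $E''_r$ in an $(A,B)$-basis sit at even rows and columns; the dimension count then forces equality. The odd case is identical.

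As for comparison: the paper does not give its own proof of this lemma but simply cites it from \cite[Lemma~16.12]{T:LRT}. Your approach is therefore not a reproduction of the paper's argument but an independent, self-contained proof built from the matrix representations of Section~\ref{sec:idempotents}. This is a nice by-product of those propositions; the original proof in \cite{T:LRT} is more structural (arguing directly from the raising/lowering action and the bipartite condition), whereas yours is a direct computation in coordinates. Both are short; yours has the advantage of staying entirely within the framework already developed in the present paper.
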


Let $V_\text{out}$ and $V_\text{in}$ denote the common spaces of 
\eqref{eq:Vout} and \eqref{eq:Vin}, respectively.
Then
\[
  V = V_\text{out} + V_\text{in}  \qquad \text{(direct sum)}.
\]

\begin{lemma}   {\rm (See \cite[Lemma 16.15]{T:LRT}.)}
\label{lem:16.15}   \samepage
\ifDRAFT {\rm lem:16.15}. \fi
We have
\begin{align*}
  A V_\text {\rm out} &= V_\text{\rm in},  &
  B V_\text{\rm out} &= V_\text{\rm in},  &
  C V_\text{\rm out} &= V_\text{\rm in},  &
\\
  A V_\text{\rm in} &\subseteq V_\text{\rm out},  &
  B V_\text{\rm in} &\subseteq V_\text{\rm out},  &
  C V_\text{\rm in} &\subseteq V_\text{\rm out}.
\end{align*}
\end{lemma}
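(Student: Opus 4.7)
The plan is to reduce each of the six claims to a straightforward application of Lemma \ref{lem:J} and Lemma \ref{lem:action}. The key observation is that $V_{\text{out}}$ and $V_{\text{in}}$ admit a parallel description in terms of each of the three decompositions: since $E_iV$ is the $i$-th component of the $(A,B)$-decomposition (and similarly for the other two), Lemma \ref{lem:J} gives
\[
  V_{\text{out}} = \sum_{j=0}^{m} V_{2j} = \sum_{j=0}^{m} V'_{2j} = \sum_{j=0}^{m} V''_{2j},
  \qquad
  V_{\text{in}} = \sum_{j=0}^{m-1} V_{2j+1} = \sum_{j=0}^{m-1} V'_{2j+1} = \sum_{j=0}^{m-1} V''_{2j+1},
\]
where $\{V_i\}$, $\{V'_i\}$, $\{V''_i\}$ denote the $(A,B)$-, $(B,C)$-, $(C,A)$-decompositions respectively.

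For the assertions about $A$, I would work with the $(A,B)$-decomposition, since by Lemma \ref{lem:action} $A$ acts as a pure lowering map there: $AV_i = V_{i-1}$ (with $V_{-1}=0$). Then
\[
  AV_{\text{out}} = \sum_{j=0}^{m} A V_{2j} = \sum_{j=1}^{m} V_{2j-1} = \sum_{j=0}^{m-1} V_{2j+1} = V_{\text{in}},
\]
and analogously $AV_{\text{in}} = \sum_{j=0}^{m-1} V_{2j} \subseteq V_{\text{out}}$. For $B$, I would instead use the $(B,C)$-decomposition: by Lemma \ref{lem:action} (applied with the convention of Definition \ref{def:prime}, i.e.\ to the LR triple $B,C,A$), $B$ lowers $\{V'_i\}_{i=0}^d$, so the same telescoping argument gives $BV_{\text{out}} = V_{\text{in}}$ and $BV_{\text{in}} \subseteq V_{\text{out}}$. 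For $C$, I would use the $(C,A)$-decomposition, on which $C$ acts as a lowering map; the identical argument yields $CV_{\text{out}} = V_{\text{in}}$ and $CV_{\text{in}} \subseteq V_{\text{out}}$.

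There is essentially no obstacle here: the work has been done in Lemma \ref{lem:J}, which identifies the three natural descriptions of $V_{\text{out}}$ and $V_{\text{in}}$, and the bipartite hypothesis $d=2m$ ensures that ``even'' and ``odd'' indices partition $\{0,1,\ldots,d\}$ cleanly so that lowering by one exchanges the two collections. The only mild subtlety is choosing, for each of $A$, $B$, $C$, the appropriate decomposition on which it acts as a genuine lowering or raising map (rather than merely tridiagonally); this is why all three expressions in Lemma \ref{lem:J} are needed, even though any single one of them would suffice to define $V_{\text{out}}$ and $V_{\text{in}}$.
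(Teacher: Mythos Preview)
The paper does not supply its own proof of this lemma; it simply cites \cite[Lemma 16.15]{T:LRT}. Your argument is correct and is the natural one: Lemma~\ref{lem:J} lets you describe $V_{\text{out}}$ and $V_{\text{in}}$ via whichever of the three decompositions is convenient, and then for each of $A$, $B$, $C$ you pick the decomposition on which it acts as a pure lowering map (Lemma~\ref{lem:action}), so that the parity shift is immediate.
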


\begin{definition}   {\rm (See \cite[Definition 16.29]{T:LRT}.)}
\label{def:Aout}  \samepage
\ifDRAFT {\rm def:Aout}. \fi
Define $A_\text{out}$, $A_\text{in}$, $B_\text{out}$, $B_\text{in}$,
$C_\text{out}$, $C_\text{in}$ in $\text{End}(V)$ as follows.
The map $A_\text{out}$ acts on $V_\text{out}$ as $A$, 
and on $V_\text{in}$ as zero.
The map $A_\text{in}$ acts on $V_\text{in}$ as $A$,
and on $V_\text{out}$ as zero.
The other maps are similarly defined.
\end{definition}

Observe
\begin{align*}
 A &= A_\text{out} + A_\text{in}, &
 B &= B_\text{out} + B_\text{in}, &
 C &= C_\text{out} + C_\text{in}.
\end{align*}

\begin{lemma}  {\rm (See \cite[Lemma 16.31, 16.33]{T:LRT}.) }   \label{lem:Bscalar}  \samepage
\ifDRAFT {\rm lem:Bscalar}. \fi
For nonzero scalars 
$\alpha_\text{\rm out}$, $\alpha_\text{\rm in}$,
$\beta_\text{\rm out}$, $\beta_\text{\rm in}$,
$\gamma_\text{\rm out}$, $\gamma_\text{\rm in}$ in $\F$,
the sequence
\begin{align}
 & \alpha_\text{\rm out} A_\text{\rm out} + \alpha_\text{\rm in} A_\text{\rm in},   
&&  \beta_\text{\rm out} B_\text{\rm out} + \beta_\text{\rm in} B_\text{\rm in},   
&&  \gamma_\text{\rm out} C_\text{\rm out} + \gamma_\text{\rm in} C_\text{\rm in}
\end{align}
is a bipartite LR triple on $V$.
Moreover this LR triple has the same idempotent data as $A,B,C$.
\end{lemma}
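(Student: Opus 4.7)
The plan is to verify directly that each of the three pairs formed from the new triple is an LR pair whose underlying decomposition coincides with the corresponding decomposition for $A,B,C$, and then to deduce bipartiteness from a short trace computation.

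First I would show that $\widetilde{A} := \alpha_{\text{out}} A_{\text{out}} + \alpha_{\text{in}} A_{\text{in}}$ lowers the $(A,B)$-decomposition $\{V_i\}_{i=0}^d$ and that $\widetilde{B} := \beta_{\text{out}} B_{\text{out}} + \beta_{\text{in}} B_{\text{in}}$ raises it. Because $V_{2j} \subseteq V_{\text{out}}$ and $V_{2j+1} \subseteq V_{\text{in}}$, the splitting $A = A_{\text{out}} + A_{\text{in}}$ restricts cleanly on each $V_i$: on $V_{2j}$ only $A_{\text{out}}$ acts as $A$, while on $V_{2j+1}$ only $A_{\text{in}}$ acts as $A$. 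Combining this with $A V_i = V_{i-1}$ and the nonvanishing of $\alpha_{\text{out}}, \alpha_{\text{in}}$ yields $\widetilde{A} V_i = V_{i-1}$; the argument for $\widetilde{B}$ is identical. Thus $(\widetilde{A}, \widetilde{B})$ is an LR pair with $(\widetilde{A},\widetilde{B})$-decomposition equal to $\{V_i\}_{i=0}^d$.

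By Lemma \ref{lem:J}, the subspaces $V_{\text{out}}$ and $V_{\text{in}}$ also coincide with the even/odd sums arising from the $(B,C)$-decomposition $\{V'_i\}_{i=0}^d$ and the $(C,A)$-decomposition $\{V''_i\}_{i=0}^d$. So the same argument, applied to $(B,C)$ and then to $(C,A)$, shows that $(\widetilde{B}, \widetilde{C})$ is LR with decomposition $\{V'_i\}_{i=0}^d$ and that $(\widetilde{C}, \widetilde{A})$ is LR with decomposition $\{V''_i\}_{i=0}^d$. Since each LR pair determines its decomposition uniquely, the three decompositions for the new triple coincide with those of $A,B,C$, and hence so do the three idempotent sequences.

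For bipartiteness, fix an $(A,B)$-basis $\{v_i\}_{i=0}^d$; then $V_{\text{out}} = \text{span}\{v_{2j}\}_{j=0}^m$ and $V_{\text{in}} = \text{span}\{v_{2j+1}\}_{j=0}^{m-1}$. The matrix of $C_{\text{out}}$ in this basis agrees with that of $C$ in even-indexed columns and is zero in odd-indexed columns, and dually for $C_{\text{in}}$. Consequently the diagonal entries of $\widetilde{C} := \gamma_{\text{out}} C_{\text{out}} + \gamma_{\text{in}} C_{\text{in}}$ are $\gamma_{\text{out}} a_{2j}$ in even positions and $\gamma_{\text{in}} a_{2j+1}$ in odd positions, all of which vanish by bipartiteness of $A,B,C$. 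This gives $\tr(\widetilde{C} E_i) = 0$ for all $i$, and the symmetric computation in a $(B,C)$-basis (respectively a $(C,A)$-basis) yields $\tr(\widetilde{A} E'_i) = 0$ and $\tr(\widetilde{B} E''_i) = 0$. Thus the new triple is bipartite. The argument is more bookkeeping than genuine obstacle; the crucial ingredient is the simultaneous compatibility of the splitting $V = V_{\text{out}} + V_{\text{in}}$ with all three decompositions, which is exactly what Lemma \ref{lem:J} provides.
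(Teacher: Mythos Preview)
Your argument is correct. The paper does not supply its own proof of this lemma; it simply cites \cite[Lemmas 16.31, 16.33]{T:LRT}, so there is nothing to compare against here beyond checking your reasoning on its own merits.

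Two small remarks. First, the definition of LR triple requires that \emph{any} ordered pair among the three maps be LR, so in principle six pairs must be checked; but as you implicitly use, if $(X,Y)$ is LR with decomposition $\{V_i\}_{i=0}^d$ then $(Y,X)$ is LR with decomposition $\{V_{d-i}\}_{i=0}^d$, so your three verifications suffice. Second, your bipartiteness computation is cleaner than you make it sound: once you know the idempotent data agrees, $\tr(\widetilde{C}E_i)$ is literally the $(i,i)$-entry of $\widetilde{C}$ in the $(A,B)$-basis, and since $C_{\text{out}}=CJ$, $C_{\text{in}}=C(I-J)$ with $J$ diagonal in that basis, this entry is a scalar multiple of $a_i=0$. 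The same works for the other two trace families. So the proof is complete as written.
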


\begin{definition}   {\rm (See \cite[Definition 16.36]{T:LRT}.)}  
\label{def:biassociate} \samepage
\ifDRAFT {\rm def:biassociate}. \fi
Two bipartite LR triples $A,B,C$ and $A',B',C'$ are called {\em biassociate}
whenever there exist nonzero scalars $\alpha$, $\beta$, $\gamma$
such that
\begin{align*}
 A' &= \alpha A_\text{out} + A_\text{in}, &
 B' &= \beta  B_\text{out} + B_\text{in}, &
 C' &= \gamma  C_\text{out} + C_\text{in}.
\end{align*}
\end{definition}

\begin{lemma}   \label{lem:Xbisimilar}   \samepage
\ifDRAFT {\rm lem:Xbisimilar}. \fi
Biassociate bipartite LR triples have the same tridiagonal space.
\end{lemma}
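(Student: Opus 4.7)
The plan is to reduce this lemma to Lemma \ref{lem:Bscalar}, which already does the heavy lifting. By Definition \ref{def:biassociate}, biassociate means there exist nonzero $\alpha, \beta, \gamma \in \F$ with
\[
 A' = \alpha A_\text{out} + A_\text{in}, \qquad
 B' = \beta B_\text{out} + B_\text{in}, \qquad
 C' = \gamma C_\text{out} + C_\text{in}.
\]
This is exactly the case of Lemma \ref{lem:Bscalar} in which $\alpha_\text{out}=\alpha$, $\beta_\text{out}=\beta$, $\gamma_\text{out}=\gamma$ and $\alpha_\text{in}=\beta_\text{in}=\gamma_\text{in}=1$. Applying that lemma, $A',B',C'$ is a bipartite LR triple on $V$ with the same idempotent data as $A,B,C$.

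Next, I would observe that each of the three decompositions associated with the LR triple is recoverable from the corresponding idempotent sequence: the $(A,B)$-decomposition $\{V_i\}_{i=0}^d$ satisfies $V_i = E_i V$ for $0 \le i \le d$, and similarly $V'_i = E'_i V$ and $V''_i = E''_i V$ for the other two decompositions. Hence the $(A',B')$-, $(B',C')$-, and $(C',A')$-decompositions of $V$ coincide with the $(A,B)$-, $(B,C)$-, and $(C,A)$-decompositions, respectively.

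Finally, the tridiagonal space for an LR triple is defined purely in terms of its three decompositions (equivalently, in terms of its idempotent data, via the condition $E_r X E_s = 0$ for $|r-s|>1$ and the analogous conditions for $\{E'_i\}$ and $\{E''_i\}$). Therefore $A,B,C$ and $A',B',C'$ have the same tridiagonal space.

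There is essentially no obstacle; the entire content of the lemma is packaged into the invariance of the idempotent data under biassociation, which is Lemma \ref{lem:Bscalar}. The proof is one paragraph long.
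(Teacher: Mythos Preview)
Your proof is correct and follows essentially the same approach as the paper: invoke Lemma \ref{lem:Bscalar} to conclude that biassociate triples share the same idempotent data, and then observe that the tridiagonal space depends only on the idempotent data. The paper's proof is simply a terser version of what you wrote.
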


\begin{proof}
By Lemma \ref{lem:Bscalar} biassociate LR triples have the same idempotent data.
The result follows.
\end{proof}

In the direct sum $V = V_\text{out} + V_\text{in}$,
let $J \in \text{End}(V)$ denote the projection onto $V_\text{out}$.
Then $I-J$ is the projection onto $V_\text{in}$.
Observe $J^2 = J$ and $J(I-J)=0 = (I-J)J$.
Also $V_\text{out} = J V$ and $V_\text{in} = (I-J)V$.

\begin{lemma}  {\rm (See \cite[Lemma 16.30]{T:LRT}.)}
\label{lem:16.30}  \samepage
\ifDRAFT {\rm lem:16.30}. \fi
We have
\begin{align*}
 A_\text{\rm out} &= A J = (I-J) A,  &
 A_\text{\rm in}   &= J A = A (I-J),
\\
 B_\text{\rm out} &= B J = (I-J) B,  &
 B_\text{\rm in}   &= J B = B (I-J),
\\
 C_\text{\rm out} &= C J = (I-J) C,  &
 C_\text{\rm in}   &= J C = C (I-J).
\end{align*}
\end{lemma}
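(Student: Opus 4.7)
The plan is to verify each of the six identities by evaluating both sides on the direct-sum decomposition $V = V_\text{out} + V_\text{in}$. The projection $J$ restricts to the identity on $V_\text{out}$ and to zero on $V_\text{in}$, with $I-J$ behaving in the opposite way. The maps $A_\text{out}, A_\text{in}, B_\text{out}, B_\text{in}, C_\text{out}, C_\text{in}$ are specified in Definition \ref{def:Aout} by prescribing their action on precisely these two summands. So checking each identity reduces to plugging in a typical vector from $V_\text{out}$ and a typical vector from $V_\text{in}$ and comparing both sides term by term.

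For example, to establish $A_\text{out} = AJ$: if $v \in V_\text{out}$ then $Jv = v$, so $AJv = Av = A_\text{out}v$; if $v \in V_\text{in}$ then $Jv = 0$, so $AJv = 0 = A_\text{out}v$. To establish $A_\text{out} = (I-J)A$ requires a brief appeal to Lemma \ref{lem:16.15}: for $v \in V_\text{out}$ we have $Av \in V_\text{in}$, which is fixed by $I-J$, giving $(I-J)Av = Av = A_\text{out}v$; for $v \in V_\text{in}$ we have $Av \in V_\text{out}$, which is annihilated by $I-J$, giving $(I-J)Av = 0 = A_\text{out}v$. The two identities $A_\text{in} = JA = A(I-J)$ are then verified by the mirror-image arguments, again using Lemma \ref{lem:16.15} to know that $A$ swaps the two summands.

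The identities with $B$ and $C$ in place of $A$ are then immediate, since Lemma \ref{lem:16.15} records the same off-diagonal behavior ($XV_\text{out} \subseteq V_\text{in}$ and $XV_\text{in} \subseteq V_\text{out}$) uniformly for $X \in \{A, B, C\}$, and Definition \ref{def:Aout} defines the ``out'' and ``in'' components of $B$ and $C$ by the same recipe as for $A$. A single computation, carried out once for $A$, transfers verbatim to $B$ and $C$.

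There is no real obstacle: the lemma is a bookkeeping statement translating the intrinsic decomposition $A = A_\text{out} + A_\text{in}$ into the projection language of $J$ and $I-J$, and the only input beyond the definitions is the swapping property of Lemma \ref{lem:16.15}. The one minor point worth noting is that Lemma \ref{lem:16.15} asserts $AV_\text{out} = V_\text{in}$ as an equality but only $AV_\text{in} \subseteq V_\text{out}$ as an inclusion; this asymmetry is harmless here, since inclusion is all that is needed for $I-J$ to fix, and for $J$ to kill, the relevant images.
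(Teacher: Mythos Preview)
Your proof is correct. The paper does not give its own proof of this lemma but simply cites \cite[Lemma 16.30]{T:LRT}; your argument --- checking the identities on each summand of $V = V_\text{out} + V_\text{in}$ using Definition \ref{def:Aout}, the projection properties of $J$, and the swapping behavior recorded in Lemma \ref{lem:16.15} --- is exactly the direct verification one would expect and is complete as written.
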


\begin{lemma}    \label{lem:bipartiteABC}   \samepage
\ifDRAFT {\rm lem:bipartiteABC}. \fi
We have
\begin{align*}
  A B C J &= A_\text{\rm out} B_\text{\rm in} C_\text{\rm out},  &
  A C B J &= A_\text{\rm out} C_\text{\rm in} B_\text{\rm out},
\\
  B C A J &= B_\text{\rm out} C_\text{\rm in} A_\text{\rm out},  &
  B A C J &= B_\text{\rm out} A_\text{\rm in} C_\text{\rm out},
\\
  C A B J &= C_\text{\rm out} A_\text{\rm in} B_\text{\rm out},  &
  B A C J &= C_\text{\rm out} B_\text{\rm in} A_\text{\rm out}.
\end{align*}
\end{lemma}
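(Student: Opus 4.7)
The plan is to expand each left-hand side using the splittings $A = A_{\text{out}}+A_{\text{in}}$, $B = B_{\text{out}}+B_{\text{in}}$, $C = C_{\text{out}}+C_{\text{in}}$, producing a sum of eight triple products, and then to show that after right-multiplication by $J$ all but one of these eight terms vanishes.

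First I would isolate two elementary rules that carry the whole argument. Combining Lemma \ref{lem:16.30} with $J^2 = J$ and $J(I-J)=0$, one obtains, for each $X \in \{A, B, C\}$, the relations $X_{\text{out}} J = X_{\text{out}}$ and $X_{\text{in}} J = 0$; thus right-multiplication by $J$ strips off any ``in'' component at the rightmost position. Next, Lemma \ref{lem:16.15} together with Definition \ref{def:Aout} show that for any $Y \in \{A,B,C\}$ the image of $Y_{\text{out}}$ lies in $V_{\text{in}}$ while $Y_{\text{out}}$ annihilates $V_{\text{in}}$, and similarly the image of $Y_{\text{in}}$ lies in $V_{\text{out}}$ while $Y_{\text{in}}$ annihilates $V_{\text{out}}$. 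Combining these yields the parity identities $X_{\text{out}} Y_{\text{out}} = 0$ and $X_{\text{in}} Y_{\text{in}} = 0$ for all $X, Y \in \{A, B, C\}$.

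With these two rules in hand, each of the six identities reduces to a three-step elimination. For example, in $ABCJ$ the factor $J$ on the right forces the third factor to be $C_{\text{out}}$; since $B_{\text{out}} C_{\text{out}} = 0$ by the parity rule, the middle factor must be $B_{\text{in}}$; and since $A_{\text{in}} B_{\text{in}} = 0$, the first factor must be $A_{\text{out}}$. Thus $ABCJ = A_{\text{out}} B_{\text{in}} C_{\text{out}}$. The remaining five identities follow by exactly the same argument: the rightmost $J$ forces the third factor into its ``out'' component, and the parity rules then propagate the alternating pattern out--in--out through the product.

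There is no substantive obstacle; the content reduces entirely to the two observations above, and the six cases are mechanically identical. I would therefore present the worked computation for $ABCJ$ in full and dispose of the other five with a single sentence asserting that the same three-step elimination applies.
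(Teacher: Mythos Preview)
Your proposal is correct and is essentially the same approach as the paper's one-line proof (``Use $J^2=J$, $(I-J)^2 = I-J$ and Lemma \ref{lem:16.30}''), just made fully explicit. Your two rules --- that right-multiplication by $J$ kills the ``in'' component and that $X_{\text{out}}Y_{\text{out}} = X_{\text{in}}Y_{\text{in}} = 0$ --- are exactly what those ingredients encode (for instance $X_{\text{out}}Y_{\text{out}} = XJ\cdot YJ = X\cdot Y(I{-}J)\cdot J = 0$), so nothing further is needed.
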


\begin{proof}
Use $J^2=J$, $(I-J)^2 = I-J$ and Lemma \ref{lem:16.30}.
\end{proof}

\begin{lemma}   \label{lem:bipartitescalar}   \samepage
\ifDRAFT {\rm lem:bipartitescalar}. \fi
For nonzero scalars 
$\alpha_\text{\rm out}$, $\alpha_\text{\rm in}$,
$\beta_\text{\rm out}$, $\beta_\text{\rm in}$,
$\gamma_\text{\rm out}$, $\gamma_\text{\rm in}$ in $\F$,
define
\begin{align*}
  A' &= \alpha_\text{\rm out} A_\text{\rm out} + \alpha_\text{\rm in} A_\text{\rm in},  &
  B' &= \beta_\text{\rm out} B_\text{\rm out} + \beta_\text{\rm in} B_\text{\rm in},  &
  C' &= \gamma_\text{\rm out} C_\text{\rm out} + \gamma_\text{\rm in} C_\text{\rm in}.
\end{align*}
Then $A' B' C' J$ is a nonzero scalar multiple of $A B C J$,
and $A' C' B' J$ is a nonzero scalar multiple of $A C B J$.
\end{lemma}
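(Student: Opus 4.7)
The plan is to reduce the claim directly to Lemma \ref{lem:bipartiteABC}, applied to the modified triple $A',B',C'$, once we verify that the bipartite decomposition $V = V_\text{out} + V_\text{in}$ and the projection $J$ are preserved.

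First I would invoke Lemma \ref{lem:Bscalar} to conclude that $A',B',C'$ is a bipartite LR triple on $V$ with the same idempotent data as $A,B,C$. By \eqref{eq:defJ} and \eqref{eq:I-J}, the subspaces $V_\text{out}$ and $V_\text{in}$ are determined by the idempotent data, so these subspaces are the same for the two triples, and the projection $J$ is the same. Therefore the decorations ``out'' and ``in'' for $A',B',C'$ refer to the same direct sum decomposition.

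Next I would compare $A'_\text{out}$ with $A_\text{out}$, and similarly for the other six maps. Using Lemma \ref{lem:16.30}, $A_\text{out} J = AJ \cdot J = AJ = A_\text{out}$ and $A_\text{in} J = A(I-J)J = 0$, so
\[
 A'J = \alpha_\text{out} A_\text{out} J + \alpha_\text{in} A_\text{in} J = \alpha_\text{out} A_\text{out}.
\]
Since $A'_\text{out} = A'J$ by Lemma \ref{lem:16.30} (applied to $A',B',C'$), we obtain $A'_\text{out} = \alpha_\text{out} A_\text{out}$. An analogous computation gives
\begin{align*}
 A'_\text{in} &= \alpha_\text{in} A_\text{in}, &
 B'_\text{out} &= \beta_\text{out} B_\text{out}, &
 B'_\text{in} &= \beta_\text{in} B_\text{in}, \\
 C'_\text{out} &= \gamma_\text{out} C_\text{out}, &
 C'_\text{in} &= \gamma_\text{in} C_\text{in}.
\end{align*}

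Finally, applying Lemma \ref{lem:bipartiteABC} to the bipartite LR triple $A',B',C'$ yields
\[
 A'B'C'J = A'_\text{out} B'_\text{in} C'_\text{out}
        = \alpha_\text{out} \beta_\text{in} \gamma_\text{out}\, A_\text{out} B_\text{in} C_\text{out}
        = \alpha_\text{out} \beta_\text{in} \gamma_\text{out}\, ABCJ,
\]
and similarly $A'C'B'J = \alpha_\text{out} \gamma_\text{in} \beta_\text{out}\, ACBJ$. Since all six of the given scalars are nonzero, both resulting scalar multiples are nonzero, proving the claim. There is no substantive obstacle here; the only point requiring care is the observation in the first paragraph that the bipartite structure (and hence $J$) is preserved, which is exactly what Lemma \ref{lem:Bscalar} supplies.
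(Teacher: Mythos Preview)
Your proof is correct and follows essentially the same route as the paper's: apply Lemma \ref{lem:bipartiteABC} to the new triple, compute $A'_\text{out} = A'J = \alpha_\text{out} A_\text{out}$ etc., and assemble the product. You are slightly more explicit than the paper in invoking Lemma \ref{lem:Bscalar} to justify that $A',B',C'$ is bipartite with the same idempotent data (hence the same $J$), which is a point the paper's proof leaves implicit.
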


\begin{proof}
By Lemma \ref{lem:bipartiteABC}
$A' B' C' J = A'_\text{out} B'_\text{in} C'_\text{out}$.
Observe $A_\text{out} J = A_\text{out}$ and $A_\text{in} J = 0$.
Using this we argue
\[
 A'_\text{out} = A' J = \alpha_\text{out} A_\text{out} J + \alpha_\text{in} A_\text{in} J 
   = \alpha_\text{out} A_\text{out}.
\]
So $A'_\text{out} = \alpha_\text{out} A_\text{out}$.
Similarly
$B'_\text{in} = \beta_\text{in} B_\text{in}$,
$C'_\text{out} = \gamma_\text{out} C_\text{out}$.
By these comments
\[
  A' B' C' J = 
   \alpha_\text{out} \beta_\text{in} \gamma_\text{out} 
   A_\text{out} B_\text{in} C_\text{out}.
\]
By this and Lemma \ref{lem:bipartiteABC} 
$A' B' C' J =   
   \alpha_\text{out} \beta_\text{in} \gamma_\text{out} 
   A B C J$.
So $A' B' C' J$ is a nonzero scalar multiple of $A B C J$.
The proof is similar for $A' C' B' J$.
\end{proof}

We recall the normalization of a bipartite LR triple.

\begin{definition}    {\rm See \cite[Definition 18.11]{T:LRT}.)}
\label{def:Bnormalized}   \samepage
\ifDRAFT {\rm def:Bnormalized}. \fi
The LR triple $A,B,C$ is said to be {\em normalized} whenever
$\alpha_2 = 1$, $\alpha'_2=1$, $\alpha''_2=1$.
\end{definition}

\begin{lemma}   {\rm See \cite[Corollary 18.15]{T:LRT}.)}
\label{lem:Bnormalized}   \samepage
\ifDRAFT {\rm lem:Bnormalized}. \fi
There exists a unique sequence of nonzero scalars $\alpha$, $\beta$, $\gamma$ in $\F$
such that the LR triple
$\alpha A_\text{out} + A_\text{in}$, 
$\beta  B_\text{out} + B_\text{in}$,  
$\gamma  C_\text{out} + C_\text{in}$
is normalized.
\end{lemma}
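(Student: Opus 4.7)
The plan is to regard the biassociate scaling as a group action of $(\F^\times)^3$ on bipartite LR triples and show that its induced action on the triple $(\alpha_2,\alpha'_2,\alpha''_2)$ of index-$2$ Toeplitz parameters is a bijection of $(\F^\times)^3$; existence and uniqueness of the normalization are then immediate.

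For nonzero $\alpha,\beta,\gamma \in \F$ write $A' = \alpha A_\text{out} + A_\text{in}$, $B' = \beta B_\text{out} + B_\text{in}$, $C' = \gamma C_\text{out} + C_\text{in}$. By Lemma \ref{lem:Bscalar} the triple $A',B',C'$ is a bipartite LR triple with the same idempotent data as $A,B,C$, and by Lemma \ref{lem:alpha2} its index-$2$ Toeplitz parameters $\tilde\alpha_2,\tilde\alpha'_2,\tilde\alpha''_2$ are nonzero. I would next compute how these three parameters depend on $\alpha,\beta,\gamma$. The natural route is via Definition \ref{def:T}: fix a $(C,B)$-basis and a compatible $(C,A)$-basis for $A,B,C$, rescale each basis vector by an appropriate power of $\alpha,\beta,\gamma$ (the exponent depending on the parity of the index, since odd-indexed vectors lie in $V_\text{in}$ and even-indexed ones in $V_\text{out}$), and verify that the result is the analogous compatible pair for $A',B',C'$. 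Reading off the $(0,2)$-entry of the resulting transition matrix should yield $\tilde\alpha_2 = \alpha^{a_1}\beta^{b_1}\gamma^{c_1}\alpha_2$ for specific integers $a_1,b_1,c_1$, with cyclic analogues for $\tilde\alpha'_2$ and $\tilde\alpha''_2$ obtained by applying the convention of Definition \ref{def:prime}.

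Imposing $\tilde\alpha_2=\tilde\alpha'_2=\tilde\alpha''_2=1$ then yields a $3\times 3$ multiplicative system in $\alpha,\beta,\gamma$, and I would verify that the associated integer matrix of exponents is unimodular so as to obtain a unique solution in $(\F^\times)^3$ valid over an arbitrary field $\F$. The main obstacle is the computation of those exponents: biassociate scaling treats $V_\text{out}$ and $V_\text{in}$ differently, so no single LR-pair basis rescales uniformly and careful bookkeeping is required. A useful device is Lemma \ref{lem:16.30}, which lets us rewrite $A' = \alpha\, AJ + A(I-J)$ (and likewise for $B', C'$) intrinsically in terms of $A,B,C,J$; combining this with the explicit Toeplitz formulas of Propositions \ref{prop:Er}--\ref{prop:Eddrij} should make the exponents readable directly from matrix entries, sidestepping the need to manipulate the alternating basis rescaling by hand.
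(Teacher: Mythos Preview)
The paper does not prove this lemma at all: it is stated with the attribution ``See \cite[Corollary 18.15]{T:LRT}'' and no argument is given. There is therefore no in-paper proof to compare your proposal against.

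As for the proposal itself, it is a plausible outline but not a proof. You correctly identify the mechanism---track how the index-$2$ Toeplitz parameters $(\alpha_2,\alpha'_2,\alpha''_2)$ transform under the biassociate scaling and solve the resulting multiplicative system---but you explicitly defer the one computation on which everything rests, namely the exponents $(a_1,b_1,c_1)$ and their cyclic shifts. Without those exponents in hand you cannot check unimodularity, and without unimodularity you have neither existence nor uniqueness over a general field $\F$ (where extracting roots may fail). Note also that Lemma~\ref{lem:Bscalar} as stated in this paper only asserts invariance of the \emph{idempotent} data, not a formula for the transformed Toeplitz data, so you would need to go back to \cite{T:LRT} (specifically the material around Lemmas~16.31--16.35 and Corollary~18.15 there) for the actual transformation law rather than derive it from what is recorded here.
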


\begin{lemma}   {\rm (See \cite[Lemma 18.12]{T:LRT}.)}   \label{lem:equitableB} \samepage
\ifDRAFT {\rm lem:equitableB}. \fi
Assume $A,B,C$ is bipartite and normalized.
Then $\alpha_i = \alpha'_i = \alpha''_i$ and
$\beta_i = \beta'_i = \beta''_i$ for $0 \leq i \leq d$.
\end{lemma}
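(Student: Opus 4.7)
The plan is to reduce the statement to proving $\alpha_i = \alpha'_i = \alpha''_i$ for all $i$, and then derive the $\beta$-equalities for free. Indeed, $T$ is upper triangular Toeplitz with parameters $\{\alpha_i\}_{i=0}^d$ and $T^{-1}$ is upper triangular Toeplitz with parameters $\{\beta_i\}_{i=0}^d$ (Lemma \ref{lem:Toeplitz}); these are linked by the generating-function identity $\bigl(\sum_i \alpha_i x^i\bigr)\bigl(\sum_i \beta_i x^i\bigr) = 1$, so the $\beta$-sequence is a polynomial function of the $\alpha$-sequence. The same holds for the primed and doubly-primed versions. Consequently, once $\alpha_i = \alpha'_i = \alpha''_i$ is established for all $i$, the corresponding equalities among the $\beta$'s follow.

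Next, the odd-indexed case is immediate: by Lemma \ref{lem:alpha2}, for bipartite LR triples we have $\alpha_i = \alpha'_i = \alpha''_i = 0$ whenever $i$ is odd. So the content of the lemma concerns the even indices. The base cases are supplied for free: $\alpha_0 = \alpha'_0 = \alpha''_0 = 1$ from Lemma \ref{lem:13.46}, and $\alpha_2 = \alpha'_2 = \alpha''_2 = 1$ from the normalization hypothesis (Definition \ref{def:Bnormalized}).

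For the remaining even indices $i = 4, 6, \ldots, d$, I would proceed by induction on $i$. The inductive step requires a relation that expresses $\alpha_{2k}$ in terms of $\alpha_0, \alpha_2, \ldots, \alpha_{2k-2}$ and quantities that are manifestly cyclic-symmetric under $(A,B,C) \mapsto (B,C,A) \mapsto (C,A,B)$ (e.g.\ quantities expressible from the classification data of the bipartite LR triple, once the appropriate normalization has been fixed). To extract such a relation, one can use the identities $E_r E_s = \delta_{r,s} E_r$ and the cross-relations $E_r E'_s$, $E_r E''_s$, $E'_r E''_s$ obtained by multiplying the matrix formulas in Propositions \ref{prop:Erij}--\ref{prop:Eddrij} and equating entries. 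The bipartite vanishing from Lemma \ref{lem:alpha2} substantially simplifies these equations, often to a tridiagonal/two-term recursion in the even indices. The identical form of Propositions \ref{prop:Erij}, \ref{prop:Edrij}, \ref{prop:Eddrij} under cyclic rotation forces the recursion governing $\alpha_{2k}$ to match that for $\alpha'_{2k}$ and $\alpha''_{2k}$, completing the induction.

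The main obstacle is the bookkeeping required to isolate a clean recursion for $\alpha_{2k}$ from the entry-wise identities; the equations coming from $E_r E'_s = \delta\cdot(\text{stuff})$ mix $\alpha_i$ with the $\vphi_j$'s and their analogues, so one must either invoke the explicit classification of bipartite LR triples (giving closed-form expressions for $\alpha_{2k}$ in terms of the bipartite parameter, which is cyclic-symmetric in the normalized case) or argue more delicately that the dependence on the non-symmetric data cancels. I expect the cleanest route is via the classification, paralleling how Lemma \ref{lem:nonbipartitenormalized} is established in the nonbipartite setting.
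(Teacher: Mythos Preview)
The paper does not give its own proof of this lemma; it is quoted from \cite[Lemma~18.12]{T:LRT} with no argument supplied here. So there is nothing in the paper to compare your attempt against directly.

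Evaluated on its own merits, your proposal is a reasonable outline but not a proof. The preliminary reductions are correct: the $\beta$-sequence is determined by the $\alpha$-sequence via the Toeplitz-inverse relation, the odd-index $\alpha$'s vanish by Lemma~\ref{lem:alpha2}, and the cases $i=0,2$ follow from Lemma~\ref{lem:13.46} together with the normalization in Definition~\ref{def:Bnormalized}. The gap is the inductive step for even $i \geq 4$: you describe two possible routes---extract a recursion from the idempotent identities, or invoke the classification---but execute neither. Phrases such as ``I would proceed by induction'', ``the main obstacle is the bookkeeping'', and ``I expect the cleanest route is via the classification'' signal a plan rather than an argument.

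The classification route in particular is problematic. In \cite{T:LRT} the result you are trying to prove is Lemma~18.12, whereas the classification of bipartite LR triples lives in Sections~30 and~39 and the closed-form Toeplitz data in Proposition~31.3; these later results are downstream of Lemma~18.12 and almost certainly depend on it. Using them here would be circular. (The same caution applies to your remark about ``paralleling how Lemma~\ref{lem:nonbipartitenormalized} is established'': that lemma is likewise only cited, not proved, in this paper.) A self-contained argument would need to exhibit an explicit recursion determining $\alpha_{2k}$ from $\alpha_0,\ldots,\alpha_{2k-2}$ via data already known to be cyclic-symmetric---for instance a bipartite analogue of the relations in Lemma~\ref{lem:prop14.6}---and you have not produced one.
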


Assume $A,B,C$ is normalized.
By the classification in \cite[Section 39]{T:LRT}, $A,B,C$ is isomorphic to
one of the following LR triples.

\begin{definition}    {\rm (See \cite[Example 30.1]{T:LRT}.) }
\label{def:Bdt}   \samepage
\ifDRAFT {\rm def:Bdt}. \fi
The LR triple $\text{B}_d(\F; t, \rho_0, \rho'_0, \rho''_0)$ is over $\F$,
diameter $d$, bipartite, normalized,
and satisfies
\begin{align*}
  & d \geq 4;  \qquad 
  \text{$d$ is even}; \qquad
  0 \neq t \in \F;  \qquad
  t^i \neq 1 \quad (1 \leq i \leq d/2);
\\
 & \rho_0, \rho'_0, \, \rho''_0 \in \F;  \qquad\qquad
    \rho_0 \rho'_0 \rho''_0 = - t^{1-d/2};
\\
 & \vphi_i =
     \begin{cases}
       \rho_0 \frac{ 1- t^{i/2} }{ 1-t}  &  \text{ if $i$ is even},   \\
      \frac{ t }{ \rho_0 } \frac{ 1 - t^{(i-d-1)/2} }{ 1-t } & \text{ if $i$ is odd}
     \end{cases}
    \qquad\qquad (1 \leq i \leq d);
\\
 & \vphi'_i =
     \begin{cases}
       \rho'_0 \frac{ 1- t^{i/2} }{ 1-t}  &  \text{ if $i$ is even},   \\
      \frac{ t }{ \rho'_0 } \frac{ 1 - t^{(i-d-1)/2} }{ 1-t } & \text{ if $i$ is odd}
     \end{cases}
    \qquad\qquad (1 \leq i \leq d);
\\
 & \vphi''_i =
     \begin{cases}
       \rho''_0 \frac{ 1- t^{i/2} }{ 1-t}  &  \text{ if $i$ is even},   \\
      \frac{ t }{ \rho''_0 } \frac{ 1 - t^{(i-d-1)/2} }{ 1-t } & \text{ if $i$ is odd}
     \end{cases}
    \qquad\qquad (1 \leq i \leq d).
\end{align*}
\end{definition}

\begin{definition}    {\rm (See \cite[Example 30.2]{T:LRT}.) }
\label{def:Bd1}   \samepage
\ifDRAFT {\rm def:Bd1}. \fi
The LR triple $\text{B}_d(\F; 1, \rho_0, \rho'_0, \rho''_0)$ is over $\F$,
diameter $d$, bipartite, normalized,
and satisfies
\begin{align*}
  & d \geq 4;  \qquad 
  \text{$d$ is even}; \qquad
  \text{$\text{Char}(\F)$ is $0$ or greater that $d/2$};
\\
 & \rho_0, \rho'_0, \, \rho''_0 \in \F;  \qquad\qquad
    \rho_0 \rho'_0 \rho''_0 = - 1;
\\
 & \vphi_i =
     \begin{cases}
       \frac{ i \rho_0}{ 2}  &  \text{ if $i$ is even},   \\
      \frac{ i-d-1 }{ 2 \rho_0 }  & \text{ if $i$ is odd}
     \end{cases}
    \qquad\qquad (1 \leq i \leq d);
\\
 & \vphi'_i =
     \begin{cases}
       \frac{ i \rho'_0}{ 2}  &  \text{ if $i$ is even},   \\
      \frac{ i-d-1 }{ 2 \rho'_0 }  & \text{ if $i$ is odd}
     \end{cases}
    \qquad\qquad (1 \leq i \leq d);
\\
 & \vphi''_i =
     \begin{cases}
       \frac{ i \rho''_0}{ 2}  &  \text{ if $i$ is even},   \\
      \frac{ i-d-1 }{ 2 \rho''_0 }  & \text{ if $i$ is odd}
     \end{cases}
    \qquad\qquad (1 \leq i \leq d).
\end{align*}
\end{definition}

\begin{definition}    {\rm (See \cite[Example 30.2]{T:LRT}.) }
\label{def:B2}   \samepage
\ifDRAFT {\rm def:B2}. \fi
The LR triple $\text{B}_2(\F; \rho_0, \rho'_0, \rho''_0)$ is over $\F$,
diameter $2$, bipartite, normalized,
and satisfies
\begin{align*}
 & \rho_0, \rho'_0, \, \rho''_0 \in \F;  \qquad\qquad
    \rho_0 \rho'_0 \rho''_0 = -1;
\\
 & \vphi_1 = -1 / \rho_0,  \qquad
    \vphi'_1 = -1 / \rho'_0,  \qquad
    \vphi''_1 = -1 / \rho''_0,
\\
 & \vphi_2 = \rho_0,  \qquad
   \vphi'_2 = \rho'_0, \qquad
   \vphi''_2 = \rho''_0.
\end{align*}
\end{definition}

\section{Bounding the dimension of the tridiagonal space; bipartite case}
\label{sec:boundbipartite}

Let $A,B,C$ be a bipartite LR triple on $V$ with parameter array \eqref{eq:parray},
Toeplitz data \eqref{eq:Toeplitzdata},
and idempotent data \eqref{eq:idempotentdata}.
Let ${\cal X}$ be the tridiagonal space for $A,B,C$.

\begin{lemma} {\rm (See \cite[Proposition 31.3]{T:LRT})}  \label{lem:alphabeta}   \samepage
\ifDRAFT {\rm lem:alphabeta}. \fi
The following hold:
\begin{itemize}
\item[\rm (i)]
For the LR triple $\text{\rm B}_d(\F; t, \rho_0, \rho'_0, \rho''_0)$,
\begin{align*}
\alpha_{2i} &= \frac{(1-t)^i}
                          {(t;t)_i},
&
\beta_{2i} &= 
 \frac{(-1)^i t^{i(i-1)/2} (1-t)^i}
        { (t;t)_i }
&& (0 \leq i \leq d/2).
\end{align*}
\item[\rm (ii)]
For the LR triple $\text{\rm B}_d(\F; 1, \rho_0, \rho'_0, \rho''_0)$,
\begin{align*}
\alpha_{2i} &= \frac{1}
                          {i !},
&
\beta_{2i} &= 
 \frac{(-1)^i }
        { i ! }
&& (0 \leq i \leq d/2).
\end{align*}
\end{itemize}
\end{lemma}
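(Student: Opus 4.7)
The plan is to rely on the explicit formulas given in \cite[Proposition 31.3]{T:LRT} for the $\alpha$'s, and then derive the $\beta$'s from the Toeplitz-inverse identity, using the classification data in Definitions \ref{def:Bdt} and \ref{def:Bd1}. Throughout, Lemma \ref{lem:alpha2} ensures that $\alpha_i = \beta_i = 0$ whenever $i$ is odd, so only the even-indexed parameters need to be treated.

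First I would quote the formulas for $\alpha_{2i}$ in parts (i) and (ii) directly from \cite[Proposition 31.3]{T:LRT}, since those are precisely the Toeplitz parameters of $T$ attached to the LR triples $\text{B}_d(\F;t,\rho_0,\rho'_0,\rho''_0)$ and $\text{B}_d(\F;1,\rho_0,\rho'_0,\rho''_0)$. Next I would extract the $\beta_{2i}$ from the convolution identity $T T^{-1}=I$. By Lemma \ref{lem:Toeplitz} both $T$ and $T^{-1}$ are upper triangular Toeplitz with parameters $\{\alpha_i\}_{i=0}^d$ and $\{\beta_i\}_{i=0}^d$ respectively, so $\sum_{k=0}^{n}\alpha_{k}\beta_{n-k}=\delta_{n,0}$; using $\alpha_{2k+1}=0=\beta_{2k+1}$, this reduces to
\begin{equation*}
 \sum_{k=0}^{i}\alpha_{2k}\beta_{2(i-k)}=\delta_{i,0} \qquad (0\leq i\leq d/2).
\end{equation*}

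For case (i), I would substitute the candidate $\beta_{2i}=(-1)^i t^{i(i-1)/2}(1-t)^i/(t;t)_i$ into the convolution and pull out the common factor $(1-t)^i/(t;t)_i$, reducing the claim to the $q$-binomial identity
\begin{equation*}
 \sum_{k=0}^{i}\qbin{i}{k}(-1)^{i-k}t^{(i-k)(i-k-1)/2}=(1;t)_i,
\end{equation*}
which vanishes for $i\geq 1$ (and equals $1$ for $i=0$); here $\binom{i}{k}_t=(t;t)_i/((t;t)_k(t;t)_{i-k})$ is the Gaussian binomial. This is the standard $q$-Chu--Vandermonde / $q$-binomial theorem, so no new computation is required beyond verifying the exponent bookkeeping. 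For case (ii), the analogous candidate $\beta_{2i}=(-1)^i/i!$ leads, after the same substitution, to the classical identity $\sum_{k=0}^{i}\binom{i}{k}(-1)^{i-k}=0$ for $i\geq 1$; alternatively, (ii) may be obtained as the $t\to 1$ limit of (i), which matches the limit relationship between Definitions \ref{def:Bdt} and \ref{def:Bd1}.

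The main obstacle is not conceptual but clerical: keeping the $q$-Pochhammer exponent $i(i-1)/2$ consistent through the convolution, and checking that the powers of $t$ in the ratio $\alpha_{2k}\beta_{2(i-k)}/\alpha_{2i}$ collapse to the single factor $t^{(i-k)(i-k-1)/2}$ needed to invoke the $q$-binomial theorem. Once that is verified on a small case (say $i=1,2$), the induction on $i$ is automatic, and parts (i) and (ii) follow uniformly.
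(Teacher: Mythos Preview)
The paper gives no proof for this lemma at all; it simply records the formulas under the citation ``See \cite[Proposition 31.3]{T:LRT}'' and moves on, treating both the $\alpha_{2i}$ and the $\beta_{2i}$ as known from that reference. Your proposal is therefore strictly more detailed than what the paper supplies.

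Your argument is correct. The reduction of the Toeplitz-inverse relation to the even-index convolution $\sum_{k=0}^i\alpha_{2k}\beta_{2(i-k)}=\delta_{i,0}$ via Lemma~\ref{lem:alpha2} is exactly right, and after substituting the candidate $\beta_{2i}$ in case~(i) and factoring out $(1-t)^i/(t;t)_i$, the remaining sum is indeed the $t$-binomial theorem specialized at $x=-1$: reindexing $k\mapsto i-k$ and using the symmetry $\binom{i}{k}_t=\binom{i}{i-k}_t$ gives $\sum_{k=0}^i\binom{i}{k}_t(-1)^k t^{k(k-1)/2}=(1;t)_i$, which vanishes for $i\ge 1$ because of the leading factor $1-1$. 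Case~(ii) is the classical $(1-1)^i$ specialization, or the $t\to 1$ limit as you note. The ``exponent bookkeeping'' you flag as the main risk is fine; no induction is actually needed once the $q$-binomial identity is invoked.

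So: same endpoint, but your route supplies a self-contained verification of the $\beta_{2i}$ where the paper just cites them.
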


Assume $A,B,C$ is normalized.
We may assume that $A,B,C$ is one of
$\text{B}_d(\F; t, \rho_0, \rho'_0, \rho''_0)$,
 $\text{B}_d(\F; 1, \rho_0, \rho'_0, \rho''_0)$,
$\text{B}_2(\F;  \rho_0, \rho'_0, \rho''_0)$.

\begin{lemma}  \label{lem:condbipartite}   \samepage
\ifDRAFT {\rm lem:condbipartite}. \fi
We have
\begin{align*}
 \alpha_{2i}^2 &\neq \alpha_{2i-2} \alpha_{2i+2},
&
 \beta_{2i}^2 &\neq \beta_{2i-2} \beta_{2i+2}
&& (1 \leq i \leq d/2-1).
\end{align*}
\end{lemma}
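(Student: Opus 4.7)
The plan is to verify both inequalities by direct computation from the formulas in Lemma \ref{lem:alphabeta}, handling the three cases $\text{B}_d(\F;t)$, $\text{B}_d(\F;1)$, and $\text{B}_2(\F)$ separately. The $\text{B}_2$ case is vacuous since the range $1 \leq i \leq d/2 - 1$ is empty when $d=2$, so we only need to treat the two $d \geq 4$ cases.

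For the case $\text{B}_d(\F;t,\rho_0,\rho'_0,\rho''_0)$ with $t \neq 1$, I would first observe that $(t;t)_{i+1} = (t;t)_i(1-t^{i+1})$ and $(t;t)_i = (t;t)_{i-1}(1-t^i)$, which gives
\begin{equation*}
  \frac{(t;t)_{i-1}(t;t)_{i+1}}{(t;t)_i^2} = \frac{1-t^{i+1}}{1-t^i}.
\end{equation*}
Using Lemma \ref{lem:alphabeta}(i), a direct substitution then yields
\begin{equation*}
   \frac{\alpha_{2i}^2}{\alpha_{2i-2}\alpha_{2i+2}} = \frac{1-t^{i+1}}{1-t^i},
   \qquad
   \frac{\beta_{2i}^2}{\beta_{2i-2}\beta_{2i+2}} = \frac{1-t^{i+1}}{t(1-t^i)},
\end{equation*}
where for the $\beta$ computation one tracks the exponent of $t$ via $(i-1)(i-2)/2 + i(i+1)/2 = i(i-1)+1$. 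Both ratios differ from $1$ precisely when $t \neq 1$, which holds by hypothesis; the denominators are nonzero since $t^i \neq 1$ for $1 \leq i \leq d/2$.

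For the case $\text{B}_d(\F;1,\rho_0,\rho'_0,\rho''_0)$, Lemma \ref{lem:alphabeta}(ii) immediately gives
\begin{equation*}
  \frac{\alpha_{2i}^2}{\alpha_{2i-2}\alpha_{2i+2}} = \frac{(i-1)!\,(i+1)!}{(i!)^2} = \frac{i+1}{i},
\end{equation*}
and the identical computation works for $\beta$ (the signs $(-1)^{i-1}(-1)^{i+1} = 1$ cancel). The ratio $(i+1)/i \neq 1$ amounts to $i$ being invertible in $\F$, which follows from the hypothesis that $\text{Char}(\F)$ is $0$ or greater than $d/2$, together with $1 \leq i \leq d/2-1$.

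The argument is essentially a routine verification, and there is no real obstacle: the only things to watch are (a) handling the $\text{B}_2$ case by noting the range is empty, (b) bookkeeping the $t$-exponent in the $\beta$ ratio for $\text{B}_d(\F;t)$, and (c) invoking the characteristic hypothesis in the $t=1$ case so that $i$ is a unit. Since each ratio turns out to be an explicit simple expression, the nonvanishing of $\alpha_{2i}^2 - \alpha_{2i-2}\alpha_{2i+2}$ and $\beta_{2i}^2 - \beta_{2i-2}\beta_{2i+2}$ follows immediately from the defining conditions in Definitions \ref{def:Bdt} and \ref{def:Bd1}.
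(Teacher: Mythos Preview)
Your proposal is correct and follows exactly the approach indicated by the paper, whose proof reads simply ``Use Lemma \ref{lem:alphabeta}.'' You have filled in the routine computations that the paper omits; the ratios you obtain and the nonvanishing arguments are all accurate. One minor phrasing point: in the $t=1$ case, once $i$ is invertible the ratio $(i+1)/i$ equals $1$ only if $1=0$ in $\F$, which is impossible, so the characteristic hypothesis is really only used to ensure the factorials (hence the $\alpha_{2j}$, $\beta_{2j}$) are nonzero so that the ratio is defined.
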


\begin{proof}
Use Lemma \ref{lem:alphabeta}.
\end{proof}

\begin{lemma}   \label{lem:mainB}   \samepage
\ifDRAFT {\rm lemma:mainB}. \fi
Assume $d \geq 4$.
Then the vector space $\cal X$ has dimension at most $8$.
\end{lemma}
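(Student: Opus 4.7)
The proof will parallel that of Lemma \ref{lem:main2}. The strategy is to exhibit a finite collection of linear maps $\pi_1,\ldots,\pi_k : {\cal X} \to \text{End}(V)$ such that (i) the sum of the image dimensions is at most $8$, and (ii) the intersection of their kernels is $0$. The bound $\dim {\cal X} \leq 8$ then follows from the rank–nullity theorem applied to $X \mapsto (\pi_1(X),\ldots,\pi_k(X))$.

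For the image bounds, I would use exactly the same principle as in Lemma \ref{lem:image}: for each $r$, the tridiagonality condition $E'_i X E'_j = 0$ when $|i-j| > 1$ (and analogously for $E''$) forces ${\cal X} E'_r$ and $E''_r {\cal X}$ to lie in small sums of one-dimensional subspaces of the form $E'_s {\cal A} E'_r$ and $E''_r {\cal A} E''_s$. Concretely, $\dim {\cal X} E'_d \leq 2$ and $\dim E''_d {\cal X} \leq 2$, while $\dim {\cal X} A E'_d \leq 3$ by the same argument after expanding $A E'_d$ in the $E'$-grading via Lemma \ref{lem:trid}. To reach a total of $8$, I expect to need one additional projection: a natural candidate is $X \mapsto X B E'_d$ (or the analogous probe $E''_d A X$), whose image sits inside $E'_{d-2} {\cal A} E'_d + E'_{d-1}{\cal A} E'_d + E'_d{\cal A} E'_d$ but whose contribution modulo the three earlier projections is at most $1$ because $A E'_d$ and $B E'_d$ share the two top components up to rescaling by Toeplitz parameters. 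This accounts for the extra unit of dimension relative to the nonbipartite case.

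The core step is the bipartite analog of Lemma \ref{lem:vanish}: show that if $X \in {\cal X}$ satisfies $X E'_d = 0$, $X A E'_d = 0$, $E''_d X = 0$ together with the extra condition above, then $X = 0$. Following the template of Lemma \ref{lem:vanish}, I would fix an $(A,B)$-basis, parametrize the tridiagonal matrix of $X$ by its diagonal entries $x_i$ and subdiagonal/superdiagonal entries $z_i$, $y_i$, and read off the vanishing conditions from Propositions \ref{prop:Edrij}, \ref{prop:Eddrij} combined with the explicit Toeplitz formulas. By Lemma \ref{lem:alpha2}, the odd-indexed Toeplitz parameters $\alpha_{2i+1}, \beta_{2i+1}$ vanish, so the resulting linear system decouples along parity of the indices. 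Each parity class then reduces to a two-term recurrence whose solvability is governed precisely by the inequalities $\alpha_{2i}^2 \neq \alpha_{2i-2}\alpha_{2i+2}$ and $\beta_{2i}^2 \neq \beta_{2i-2}\beta_{2i+2}$ from Lemma \ref{lem:condbipartite}, playing the role that Lemma \ref{lem:cond}(ii) played in the nonbipartite argument.

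The main obstacle I anticipate is exactly this parity decoupling. In the nonbipartite case three vanishing conditions sufficed because the recurrence relating $(x_i,y_i,z_i)$ to $(x_{i-1},y_{i-1},z_{i-1})$ propagated the zero through every index; in the bipartite case the even-index and odd-index subsystems are independent, so one must verify that the chosen collection of projections simultaneously kills both parity components, which is the geometric content of the fourth projection. Once this is pinned down, the proof is a bookkeeping exercise along the lines of the derivation of equations \eqref{eq:10}–\eqref{eq:1d} in Lemma \ref{lem:vanish}, with Lemma \ref{lem:condbipartite} invoked at each induction step in place of Lemma \ref{lem:cond}.
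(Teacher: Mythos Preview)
Your proposed fourth projection does not give any new information, and this is a fatal gap. In the bipartite case the diagonal trace data vanishes, so $A V'_d \subseteq V'_{d-1}$; since $B$ lowers the $(B,C)$-decomposition we also have $B V'_d = V'_{d-1}$. Both $A|_{V'_d}$ and $B|_{V'_d}$ are therefore nonzero linear maps between the one-dimensional spaces $V'_d$ and $V'_{d-1}$, hence scalar multiples of each other. Consequently $A E'_d$ and $B E'_d$ are proportional as elements of $\text{End}(V)$, so the map $X \mapsto X B E'_d$ has exactly the same kernel and (up to scaling) the same image as $X \mapsto X A E'_d$. The same reasoning shows $E''_d A$ and $E''_d B$ are proportional, so the alternative probe $E''_d A X$ is likewise redundant with $E''_d X$ after one notes that $E''_d A = E''_d A E''_{d-1}$. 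Your four projections therefore have combined image dimension at most $2+2+3+0 = 7$, but Theorem \ref{thm:main1} gives $\dim {\cal X} = 8$. Hence the intersection of the four kernels is at least one-dimensional, and the vanishing lemma you propose to prove is actually false with this choice.

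The paper abandons the projection/kernel framework used in Lemma \ref{lem:main2} and proceeds differently. It works only with the conditions $E''_r X E''_s = 0$ for $|r-s| \geq 2$, but uses \emph{many} pairs $(r,s)$ rather than the single extremal index $d$. Concretely, for each $0 \leq r \leq d-2$ it extracts a specific entry of $E''_{d-r} X E''_{d-r-2}$ (choosing the $(0,d)$ or $(1,d-1)$ entry according to the parity of $r$) to obtain a two-term recursion $\alpha_0 \beta_2 x_r + \alpha_2 \beta_0 x_{r+2} = 0$ for the diagonal entries; similarly it reads off entries of $E''_{d-r} X E''_{d-r-3}$ and $E''_{d-r} X E''_{d-r-5}$ to obtain recursions coupling the $y$'s and $z$'s in steps of two. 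From these it shows directly that the eight entries $x_0,x_1,y_0,y_{d-1},z_2,z_4,z_{d-3},z_{d-1}$ determine the whole tridiagonal matrix, invoking Lemma \ref{lem:condbipartite} at each step of the two separate parity inductions. The moral is that in the bipartite case one must probe the interior of the $E''$-grading, not just its edge.
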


\begin{proof}
Fix an $(A,B)$-basis $\{v_i\}_{i=0}^d$ for $V$.
We identify each element of $\text{End}(V)$ with the matrix that represents
it with respect to $\{v_i\}_{i=0}^d$.
Pick an element $X \in {\cal X}$.
Then $X$ is tridiagonal; set
\[
 X = 
 \begin{pmatrix}
  x_0 & y_0 &  & & & \text{\bf 0}  \\
  z_1 & x_1 & y_1 \\
       & z_2 & \cdot & \cdot  \\
       &      & \cdot & \cdot & \cdot  \\
       &      &          & \cdot & \cdot & y_{d-1} \\
  \text{\bf 0}   &      &          &          & z_d & x_d
 \end{pmatrix}.
\]
For notational convenience, define $y_{-1}=0$, $y_d=0$, $z_0=0$, $z_{d+1}=0$.
By Proposition \ref{prop:Eddrij} and Lemma \ref{lem:equitableB},
\begin{align*}
 (E''_r)_{i,j} &=
  \begin{cases}
    \alpha_{r-d+j} \beta_{d-r-i}  & \text{ if $i \leq d-r \leq j$},
 \\
    0  &  \text{ otherwise}
  \end{cases}
  &&  (0 \leq i,j \leq d).
\end{align*}
For $0 \leq r \leq d-2$, compute the $(0,d)$-entry of $E''_{d-r} X E''_{d-r-2}$ if $r$ is even,
and compute the $(1,d-1)$-entry of $E''_{d-r} X E''_{d-r-2}$ if $r$ is odd.
This yields
\begin{align}
 \alpha_0 \beta_2 x_r + \alpha_2 \beta_0 x_{r+2} = 0
    && (0 \leq r \leq d-2).              \label{eq:Bxr}
\end{align}
For $0 \leq r \leq d-3$ we compute the $(0,d-1)$-entry of $E''_{d-r} X E''_{d-r-3}$ if $r$ is even,
and $(1,d)$-entry of $E''_{d-r} X E''_{d-r-3}$ if $r$ is odd.
This yields
\begin{align}
 & \alpha_0 \beta_2 y_r + \alpha_2 \beta_0 y_{r+2}
       + \alpha_0 \beta_4 z_r + \alpha_2 \beta_2 z_{r+2} + \alpha_4 \beta_0 z_{r+4} = 0
        && (0 \leq r \leq d-3).                               \label{eq:Bequ1}
\end{align}
When $d \geq 6$,
for $0 \leq r \leq d-5$ we compute the $(0,d-1)$-entry of $E''_{d-r} X E''_{d-r-5}$ 
if $r$ is even,
and $(1,d)$-entry of $E''_{d-r} X E''_{d-r-5}$ if $r$ is odd.
This yields
\begin{align}
 & \alpha_0 \beta_4 y_r + \alpha_2 \beta_2 y_{r+2} + \alpha_4 \beta_0 y_{r+4}  \notag
\\
 & \qquad\qquad
   + \alpha_0 \beta_6 z_r + \alpha_2 \beta_4 z_{r+2}
  + \alpha_4 \beta_2 z_{r+4} + \alpha_6 \beta_0 z_{r+6} = 0
  &&  (0 \leq r \leq d-5).                                         \label{eq:Bequ2}
\end{align}
We show that each entry of $X$ are uniquely determined by
$x_0$, $x_1$, $y_0$, $y_{d-1}$, $z_2$, $z_4$, $z_{d-3}$, $z_{d-1}$.
To this aim, we assume
\begin{equation}
  x_0 = 0, \quad
  x_1 = 0, \quad
  y_0 = 0, \quad
  y_{d-1} = 0, \quad
  z_2 = 0, \quad
  z_4 = 0, \quad
  z_{d-3} = 0, \quad
  z_{d-1} = 0,                                   \label{eq:assump}
\end{equation}
and we show $X=0$.
By \eqref{eq:Bxr} and $x_0=0$, $x_1=0$ we find $x_r=0$ for $0 \leq r \leq d$.
We show that
\begin{align}
  y_{d-r} &= 0, \qquad\qquad z_{d-r}=0    &&  r=1,3,5, \ldots, d-1.    \label{eq:claim1}
\end{align}
By \eqref{eq:assump} $y_{d-1}=0$ and $z_{d-1}=0$, so \eqref{eq:claim1}
holds for $r=1$.
By \eqref{eq:Bequ1} for $r=d-3$,
\[
   \alpha_0 \beta_2 y_{d-3} + \alpha_2 \beta_0 y_{d-1}
   + \alpha_0 \beta_4 z_{d-3} + \alpha_2 \beta_2 z_{d-1} = 0.
\]
By \eqref{eq:assump} $z_{d-3}=0$, $z_{d-1}=0$, $y_{d-1}=0$.
By these comments $y_{d-3}=0$.
So \eqref{eq:claim1} holds for $r=3$.
Now assume $r \geq 5$, $r$ is odd.
By induction and \eqref{eq:Bequ1},
\begin{equation}
   \alpha_0 \beta_2 y_r + \alpha_0 \beta_4 z_r = 0.             \label{eq:Baux1}
\end{equation}
By induction and \eqref{eq:Bequ2},
\begin{equation}
   \alpha_0 \beta_4 y_r + \alpha_0 \beta_6 z_r = 0.             \label{eq:Baux2}
\end{equation}
By Lemma \ref{lem:condbipartite} $\beta_2 \beta_6 - \beta_4^2 \neq 0$,
so we can solve the system of linear equations \eqref{eq:Baux1}, \eqref{eq:Baux2};
This yields $y_r = 0$ and $z_r =0$.
We have shown \eqref{eq:claim1}.
Next we show
\begin{align}
 y_r &= 0, \qquad\qquad z_{r+2}=0
  &&  r=0,2,4,\ldots,d-2.                              \label{eq:claim2}
\end{align}
By \eqref{eq:assump} $y_0=0$ and $z_2=0$. So \eqref{eq:claim2} holds for $r=0$.
By \eqref{eq:Bequ1} for $r=0$,
\[
  \alpha_0 \beta_2 y_0 + \alpha_2 \beta_0 y_2
  + \alpha_2 \beta_2 z_2 + \alpha_4 \beta_0 z_4 = 0.
\]
By this and \eqref{eq:assump} $y_2 = 0$.
So \eqref{eq:claim2} holds for $r=2$.
Now assume $r \geq 4$, $r$ is even.
By  \eqref{eq:Bequ1}
\[
  \alpha_0 \beta_2 y_r + \alpha_2 \beta_0 y_{r+2}
  + \alpha_0 \beta_4 z_r + \alpha_2 \beta_2 z_{r+2} + \alpha_4 \beta_0 z_{r+4} = 0.
\]
By induction, $y_r=0$, $z_r=0$, $z_{r+2}=0$.
By these comments
\begin{equation}
  \alpha_2 \beta_0 y_{r+2} + \alpha_4 \beta_0 z_{r+4}=0.    \label{eq:Baux3}
\end{equation}
By \eqref{eq:Bequ2} with $r \rightarrow r-2$,
\[
  \alpha_0 \beta_4 y_{r-2} + \alpha_2 \beta_2 y_r + \alpha_4 \beta_0 y_{r+2}
 + \alpha_0 \beta_6 z_{r-2} + \alpha_2 \beta_4 z_r 
 + \alpha_4 \beta_2 z_{r+2} + \alpha_6 \beta_0 z_{r+4} = 0.
\]
By induction $y_{r-2}=0$, $y_r=0$, $z_{r-2}=0$, $z_r=0$, $z_{r+2}=0$.
By these comments
\begin{equation}
  \alpha_4 \beta_0 y_{r+2} + \alpha_6 \beta_0 z_{r+4} = 0.   \label{eq:Baux4}
\end{equation}
By Lemma \ref{lem:condbipartite} $\alpha_2 \alpha_6 - \alpha_4^2 \neq 0$,
so we can solve the linear equations \eqref{eq:Baux3}, \eqref{eq:Baux4}.
This yields $y_{r+2}=0$ and $z_{r+4}=0$.
We have shown \eqref{eq:claim2}.
By \eqref{eq:assump}, \eqref{eq:claim1}, \eqref{eq:claim2} 
we get $y_r=0$ for $0 \leq r \leq d-1$ and $z_r = 0$ for $1 \leq r \leq d$.
So $X=0$.
Thus $\cal X$ has dimension at most $8$.
\end{proof}

\section{Proof of Theorem \ref{thm:main1}}
\label{sec:proofbipartite}

Let $A,B,C$ be an LR triple on $V$
with parameter array \eqref{eq:parray}
and Toeplitz data \eqref{eq:Toeplitzdata}.

\begin{lemma}  {\rm (See \cite[Proposition 14.6]{T:LRT}.) }    \label{lem:prop14.6}  \samepage
\ifDRAFT {\rm lem:prop14.6}. \fi
For $2 \leq i \leq d-1$,
\begin{align*}
\frac{ \vphi'_i } {\vphi''_{d-i+1} }
 &= \alpha'_0 \beta'_2 \vphi_{i-1} + \alpha'_1 \beta'_1 \vphi_i 
                 + \alpha'_2 \beta'_0 \vphi_{i+1},
\\
\frac{ \vphi''_i } {\vphi_{d-i+1} }
 &= \alpha''_0 \beta''_2 \vphi'_{i-1} + \alpha''_1 \beta''_1 \vphi'_i 
                 + \alpha''_2 \beta''_0 \vphi'_{i+1},
\\
\frac{ \vphi_i } {\vphi'_{d-i+1} }
 &= \alpha_0 \beta_2 \vphi''_{i-1} + \alpha_1 \beta_1 \vphi''_i 
                 + \alpha_2 \beta_0 \vphi''_{i+1}.
\end{align*}
\end{lemma}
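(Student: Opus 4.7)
The three identities of Lemma \ref{lem:prop14.6} are related by the cyclic permutation $A,B,C \to B,C,A$ of the LR triple, as encoded in Definition \ref{def:prime}: the second identity is obtained from the first by applying the prime, and the third from the second by another application of the prime. Consequently it suffices to prove just one of them.

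To prove (say) the third identity, the plan is to fix an $(A,B)$-basis of $V$ and compute a single matrix entry in two different ways. In such a basis, Lemma \ref{lem:entriesABC} gives the tridiagonal entries of $A$, $B$, $C$ explicitly; in particular, the superdiagonal entries of $C$ are of the form $\vphi'_{d-i+1}/\vphi_i$. Meanwhile, Proposition \ref{prop:Eddrij} gives the entries of each idempotent $E''_r$ in the same basis as explicit products involving the Toeplitz parameters $\alpha'_k$ and $\beta'_k$.

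The key structural observation is that, by Lemma \ref{lem:action} applied to the triple $C,A,B$, the map $C$ lowers the $(C,A)$-decomposition. Therefore $E''_s C E''_r = 0$ unless $s = r-1$, and we have the expansion $C = \sum_r E''_{r-1} C E''_r$. Reading off the $(i-1,i)$-entry of this identity in the $(A,B)$-basis yields, after using $\alpha'_0 = \beta'_0 = 1$ from Lemma \ref{lem:13.46}, an equation having exactly the three-term shape of one of the claimed identities. The support conditions in Proposition \ref{prop:Eddrij} restrict the sum to at most three values of $r$, producing exactly the three terms appearing on the right-hand side of the identity.

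The principal obstacle will be matching the form of the identity that drops out of the natural computation with the precise form stated in the lemma. One must reverse indices (via substitutions such as $i \mapsto d-i+1$) and select the correct combination of basis, operator and matrix position so that each of the three identities emerges in exactly the advertised form (in particular with the correct choice among $\vphi$, $\vphi'$, $\vphi''$ on each side). Once this bookkeeping is sorted, the remaining work is a routine expansion using the formulas from Lemma \ref{lem:entriesABC} and Proposition \ref{prop:Eddrij}, together with the normalizations from Lemma \ref{lem:13.46}.
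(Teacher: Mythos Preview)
The paper does not actually prove this lemma; it is quoted verbatim from \cite[Proposition~14.6]{T:LRT}, so there is no proof in the paper to compare against. I can, however, point out a concrete gap in your proposed argument.

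Your reduction by cyclic symmetry is correct, and the general plan of computing a matrix entry in two ways is sound. The problem is the specific choice of operator. You propose to expand $C = \sum_r E''_{r-1} C E''_r$ (using that $C$ lowers the $(C,A)$-decomposition) and read off the $(i-1,i)$-entry in the $(A,B)$-basis. This expansion is a tautology: both sides are literally $C$, so taking any matrix entry just gives that entry back. If you carry out the computation with Proposition~\ref{prop:Eddrij} and Lemma~\ref{lem:entriesABC}, the three $r$-values that survive contribute terms built from the entries of $C$ itself (namely $\vphi''_{d-i+2}$, $a_{i-1}$, and $\vphi'_{d-i+1}/\vphi_i$), and the resulting equation collapses, via $\beta'_1 = -\alpha'_1$ and the Toeplitz relation $\alpha'_0\beta'_2 + \alpha'_1\beta'_1 + \alpha'_2\beta'_0 = 0$ coming from $(T')^{-1}T' = I$, to $0=0$.

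The key mismatch is that the right-hand side of the identity you want involves the \emph{unprimed} parameters $\vphi_{i-1},\vphi_i,\vphi_{i+1}$, which are the subdiagonal entries of $B$ (not $C$) in the $(A,B)$-basis. An expansion of $C$ cannot produce them. To obtain a nontrivial three-term relation of the required shape you need a computation that genuinely brings $B$ into play---for instance, by using that $B$ is tridiagonal (not lowering) with respect to the $(C,A)$-decomposition and computing a suitable entry of $E''_r B E''_s$, or by working through the transition matrices of Lemmas~\ref{lem:transAB}--\ref{lem:transCA}. That is closer to the route taken in \cite{T:LRT}.
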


\begin{lemma}   {\rm (  \cite[Lemma 16.9]{T:LRT}.) }  \label{lem:16.9}  \samepage
\ifDRAFT {\rm lem:16.9}. \fi
Assume $A,B,C$ is bipartite and $d \geq 2$. 
Then $\beta_2 = - \alpha_2$, $\beta'_2 = - \alpha'_2$, $\beta''_2 = - \alpha''_2.$
\end{lemma}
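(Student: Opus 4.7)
The plan is to exploit the defining identity $T T^{-1} = I$, using the fact that both $T$ and $T^{-1}$ are upper triangular and Toeplitz with parameters $\{\alpha_i\}_{i=0}^d$ and $\{\beta_i\}_{i=0}^d$ respectively (Lemma \ref{lem:Toeplitz}, Definition \ref{def:Toeplitzdata}). For a product of two upper triangular Toeplitz matrices, the $(i,j)$-entry of $T T^{-1}$ with $j=i+n$ is the convolution $\sum_{k=0}^{n} \alpha_k \beta_{n-k}$, which must equal $\delta_{n,0}$. Setting $n=2$ yields the single relation
\begin{equation*}
  \alpha_0 \beta_2 + \alpha_1 \beta_1 + \alpha_2 \beta_0 = 0.
\end{equation*}

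Next I would invoke Lemma \ref{lem:13.46}, which supplies $\alpha_0 = 1$ and $\beta_0 = 1$. Substituting, the relation reduces to $\beta_2 = -\alpha_1 \beta_1 - \alpha_2$. At this point the bipartite hypothesis enters through Lemma \ref{lem:alpha2}: since $A,B,C$ is bipartite, $\alpha_i$ and $\beta_i$ vanish for odd $i$, so in particular $\alpha_1 = 0$ (and likewise $\beta_1 = 0$, though only $\alpha_1 = 0$ is needed). Hence $\beta_2 = -\alpha_2$.

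The same argument, applied verbatim to the LR triples $B,C,A$ and $C,A,B$ via the convention of Definition \ref{def:prime}, yields $\beta'_2 = -\alpha'_2$ and $\beta''_2 = -\alpha''_2$. There is no substantive obstacle here: the proof is a two-line consequence of Toeplitz convolution combined with the vanishing of the odd-indexed Toeplitz parameters in the bipartite case. The only point requiring a moment of care is verifying that the hypothesis $d \geq 2$ is what makes the $n=2$ convolution identity meaningful (so that $\beta_2$ and $\alpha_2$ are actually defined as Toeplitz parameters rather than being outside the matrix).
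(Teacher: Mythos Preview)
Your argument is correct. The convolution identity $\sum_{k=0}^{n}\alpha_k\beta_{n-k}=\delta_{n,0}$ (from $TT^{-1}=I$ and the Toeplitz structure in Lemma~\ref{lem:Toeplitz}) at $n=2$, combined with $\alpha_0=\beta_0=1$ from Lemma~\ref{lem:13.46} and $\alpha_1=\beta_1=0$ from Lemma~\ref{lem:alpha2}, gives $\beta_2=-\alpha_2$; the primed versions follow by Definition~\ref{def:prime}. Note that the paper does not actually supply its own proof of this lemma: it is quoted directly from \cite[Lemma~16.9]{T:LRT}. Your proof is exactly the kind of short verification one would expect for such a cited fact, and the remark about $d\geq 2$ being needed so that $\alpha_2,\beta_2$ are genuine Toeplitz parameters is the right explanation of that hypothesis.
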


\begin{lemma}   \label{lem:vphii-1i+1}   \samepage
\ifDRAFT {\rm lem:vphii-1i+1}. \fi
Assume $A,B,C$ is bipartite.
Then 
\begin{align*}
  \vphi_{i-1} &\neq \vphi_{i+1}, &
 \vphi'_{i-1} &\neq \vphi'_{i+1}, &
 \vphi''_{i-1} &\neq \vphi''_{i+1}
&& (2 \leq i \leq d-1).
\end{align*}
\end{lemma}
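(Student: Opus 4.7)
The plan is to apply Lemma \ref{lem:prop14.6} directly, using the bipartite structure to simplify the right-hand sides. By Lemma \ref{lem:alpha2}, for bipartite $A,B,C$ the odd-indexed Toeplitz parameters vanish; in particular $\alpha'_1 = 0$ and $\beta'_1 = 0$. Combined with $\alpha'_0 = \beta'_0 = 1$ from Lemma \ref{lem:13.46} and the bipartite relation $\beta'_2 = -\alpha'_2$ from Lemma \ref{lem:16.9}, the first identity of Lemma \ref{lem:prop14.6} collapses to
\[
 \frac{\vphi'_i}{\vphi''_{d-i+1}}
   = \alpha'_2\bigl(\vphi_{i+1} - \vphi_{i-1}\bigr)
 \qquad (2 \leq i \leq d-1).
\]

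Now I would argue that the left-hand side is nonzero, since $\vphi'_i \neq 0$ for $1 \leq i \leq d$ and $\vphi''_{d-i+1} \neq 0$ for $1 \leq d-i+1 \leq d$ (both hold in the range $2 \leq i \leq d-1$), using the fact from Section \ref{sec:bases} that all parameter sequence entries are nonzero. Moreover $\alpha'_2 \neq 0$ by Lemma \ref{lem:alpha2} (since $2$ is even). Dividing by $\alpha'_2$ gives $\vphi_{i+1} - \vphi_{i-1} \neq 0$, i.e., $\vphi_{i-1} \neq \vphi_{i+1}$.

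The remaining two inequalities $\vphi'_{i-1} \neq \vphi'_{i+1}$ and $\vphi''_{i-1} \neq \vphi''_{i+1}$ follow by the same reasoning from the second and third identities of Lemma \ref{lem:prop14.6}, or equivalently by applying the proved inequality to the LR triples $B,C,A$ and $C,A,B$ via the notational convention of Definition \ref{def:prime}. There is no real obstacle here; the entire content is recognizing that the bipartite simplification turns the three-term identity into a two-term identity whose nonvanishing left-hand side forces $\vphi_{i+1} \neq \vphi_{i-1}$.
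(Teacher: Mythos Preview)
Your proof is correct and essentially identical to the paper's own proof: both substitute the bipartite simplifications $\alpha'_0=\beta'_0=1$, $\alpha'_1=\beta'_1=0$, $\beta'_2=-\alpha'_2$ from Lemmas \ref{lem:13.46}, \ref{lem:alpha2}, \ref{lem:16.9} into the first identity of Lemma \ref{lem:prop14.6} to obtain $\vphi'_i/\vphi''_{d-i+1}=\alpha'_2(\vphi_{i+1}-\vphi_{i-1})$, then use $\alpha'_2\neq 0$ and the nonvanishing of the parameter sequences to conclude. The remaining two inequalities are handled the same way in both.
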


\begin{proof}
We first show $\vphi_{i-1} \neq \vphi_{i+1}$.
By Lemma \ref{lem:13.46}  $\alpha'_0 = 1$ and $\beta'_0 =1$.
By  Lemma \ref{lem:alpha2} $\alpha'_1=0$ and  $\beta'_1=0$.
By Lemma \ref{lem:16.9} $\beta'_2 = - \alpha'_2$.
By these comments and Lemma \ref{lem:prop14.6}
$\beta'_2 (\vphi_{i-1} - \vphi_{i+1}) = \vphi'_i / \vphi''_{d-i+1}$.
By Lemma \ref{lem:alpha2} $\beta'_2 \neq 0$.
By these comments $\vphi_{i-1} \neq \vphi_{i+1}$.
In a similar way, we can show that $\vphi'_{i-1} \neq \vphi'_{i+1}$
and $\vphi''_{i-1} \neq \vphi''_{i+1}$.
\end{proof}

Let  ${\cal X}$ be the tridiagonal space for $A,B,C$.

\begin{lemma}    \label{lem:contained}   \samepage
\ifDRAFT {\rm lem:contained}. \fi
Assume $A,B,C$ is bipartite.
Then for $X \in {\cal X}$, the elements $XJ$ and $X(I-J)$ are contained in $\cal X$.
\end{lemma}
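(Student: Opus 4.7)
The plan is to show that $XJ$ and $X(I-J)$ each meet the defining tridiagonality condition with respect to all three decompositions by exploiting the fact that $J$ commutes with every idempotent of every one of the three sequences.

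First I would record the commutation fact: from \eqref{eq:defJ} we have simultaneously
\[
 J = \sum_{j=0}^{m} E_{2j} = \sum_{j=0}^{m} E'_{2j} = \sum_{j=0}^{m} E''_{2j}.
\]
Since $\{E_i\}_{i=0}^d$ is a system of mutually orthogonal idempotents, each $E_i$ commutes with every element of the span of $\{E_0,\dots,E_d\}$, and in particular with $J$. The same argument using the $(B,C)$ expression for $J$ shows that $J$ commutes with every $E'_i$, and using the $(C,A)$ expression shows that $J$ commutes with every $E''_i$. Of course $I-J$ then also commutes with each $E_i$, $E'_i$, $E''_i$.

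Next I would verify the three tridiagonality conditions. Pick $X \in \mathcal{X}$ and indices $r,s$ with $|r-s|>1$. Because $X$ is tridiagonal with respect to the $(A,B)$-decomposition, $E_r X E_s = 0$, and therefore
\[
  E_r (XJ) E_s = E_r X (J E_s) = E_r X (E_s J) = (E_r X E_s) J = 0.
\]
The identical computation with $E'_r, E'_s$ and with $E''_r, E''_s$ (using tridiagonality with respect to the $(B,C)$- and $(C,A)$-decompositions, respectively, together with the commutation fact above) yields $E'_r (XJ) E'_s = 0$ and $E''_r (XJ) E''_s = 0$. Hence $XJ$ is tridiagonal with respect to each of the three decompositions, so $XJ \in \mathcal{X}$. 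Replacing $J$ with $I-J$ throughout gives $X(I-J) \in \mathcal{X}$.

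There is no real obstacle here; the content of the lemma is essentially packaged in \eqref{eq:defJ}, which ensures that $J$ lies in each of the three commutative subalgebras generated by $\{E_i\}$, $\{E'_i\}$, and $\{E''_i\}$ simultaneously. The only thing to be careful about is making explicit use of all three expressions for $J$ rather than just one, since tridiagonality must be checked against all three decompositions and a single commutation relation would only cover one of them.
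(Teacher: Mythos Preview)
Your proposal is correct and essentially the same as the paper's proof. The paper computes $JE_s$ explicitly (it equals $E_s$ or $0$ according to the parity of $s$) rather than phrasing it as a commutation relation, but this is the same observation: $J$ lies in the span of each idempotent sequence, so $E_r X J E_s$ reduces to either $0$ or $E_r X E_s = 0$.
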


\begin{proof}
Pick any $X \in {\cal X}$.
Pick any integers $r,s$ such that $0 \leq r,s \leq d$ and $|r-s|>1$.
We show that $E_r X J E_s=0$.
By \eqref{eq:defJ} $J = \sum_{i=0}^{d/2} E_{2i}$.
So $J E_s = 0$ if $s$ is odd, and $J E_s = E_s$ if $s$ is even.
We have $E_r X E_s = 0$ since $X \in {\cal X}$.
By these comments $E_r X J E_s = 0$.
In a similar way, $E'_r X J E'_s = 0$ and $E''_r X J E''_s = 0$.
Thus $X J$ is contained in $\cal X$.
Similarly, $X (I-J)$ is contained in $\cal X$.
\end{proof}

\begin{proofof}{Theorem \ref{thm:main1}}
In view of Lemmas \ref{lem:Xbisimilar} and \ref{lem:bipartitescalar}, 
we may assume that $A,B,C$ is normalized.

Note by Lemma \ref{lem:contained} that ${\cal X} J \subseteq {\cal X}$
and ${\cal X} (I-J) \subseteq {\cal X}$.
Observe $X = X J + X (I-J)$ for $X \in {\cal X}$.
Therefore ${\cal X} = {\cal X} J + {\cal X} (I-J)$.
Using $J^2 = J$ and $(I-J) J = 0$,
one checks that spaces ${\cal X}J$ and ${\cal X}(I-J)$ has zero intersection.
So ${\cal X} = {\cal X} J + {\cal X} (I-J)$ is a direct sum.

Fix an $(A,B)$-basis $\{v_i\}_{i=0}^d$ for $V$.
We identify each element of $\text{End}(V)$ with the matrix representing it
with respect to $\{v_i\}_{i=0}^d$.
By Lemma \ref{lem:entriesABC} and since $A,B,C$ is bipartite,
each of $A$, $B$, $C$ is tridiagonal with the following entries:
\[
 \begin{array}{ccc|ccc|ccc}
   A_{i,i-1} & A_{i,i} & A_{i-1,i} & B_{i,i-1} & B_{i,i} & B_{i-1,i} & C_{i,i-1} & C_{i,i} & C_{i-1,i}
 \\ \hline
  0 & 0 & 1 & \vphi_i & 0 & 0 & \vphi''_{d-i+1} & 0 & \vphi'_{d-i+1}/\vphi_i   \rule{0mm}{4mm}
 \end{array}
\]
The matrix $J$ is the diagonal matrix whose $(i,i)$-entry is $1$ if $i$ is even,
and $0$ if $i$ is odd.

(i):
Assume $d=2$.
We have
\begin{align*}
J &=
  \begin{pmatrix}
    1 & 0 & 0  \\
    0 & 0 & 0  \\
    0 & 0 & 1
  \end{pmatrix},
&
A &=
  \begin{pmatrix}
    0 & 1 & 0 \\
    0 & 0 & 1 \\
    0 & 0 & 0
  \end{pmatrix},
&
B &=
  \begin{pmatrix}
    0 & 0 & 0 \\
    \vphi_1 & 0 & 0 \\
    0 & \vphi_2 & 0
  \end{pmatrix}.
\end{align*}
Now one routinely checks that $J$, $AJ$, $BJ$ are linearly independent.
Similarly $I-J$, $A(I-J)$, $B(I-J)$ are linearly independent.
By Lemma \ref{lem:dimXd2} $\dim {\cal X} \leq 6$.
The result follows.

(ii):
Assume $d \geq 4$.
We first show that $J$, $AJ$, $BJ$, $ACBJ$ are linearly independent.
For scalars $f_0$, $f_1$, $f_2$, $f_3$ in $\F$, set
\[
  Y = f_0 J + f_1 A J + f_2 B J + f_3 ACB J.
\]
We assume $Y=0$, and we show   $f_0$, $f_1$, $f_2$, $f_3$ are all $0$.
Compute the $(0,0)$-entry of $Y$ to find $f_0 = 0$.
Compute the $(i,j)$-entry of $Y$ for $(i,j)=(1,0)$, $(1,2)$, $(3,2)$ to get
\begin{align*}
 0 &=  f_2 +  \vphi''_{d-1}  f_3,
\\
 0 &= f_1 + \vphi'_{d-2} f_3,
\\
 0 &= f_2 +  \vphi''_{d-3} f_3.
\end{align*}
Viewing the above equations as a system of linear equations with unknowns
$f_1$, $f_2$, $f_3$, let $M$ be the coefficient matrix.
Then
\[
M =
 \begin{pmatrix}
 0 & 1 &  \vphi''_{d-1}
\\
 1 & 0 & \vphi'_{d-2}
\\
 0 & 1 & \vphi''_{d-3}
\end{pmatrix}.
\]
The determinant of $M$ is
\[
  \det M = \vphi''_{d-1} - \vphi''_{d-3}.
\]
This is nonzero by Lemma \ref{lem:vphii-1i+1},
so $f_1$, $f_2$, $f_3$ are all $0$.
We have shown that $J$, $AJ$, $BJ$, $ACBJ$
are linearly independent.

Next we show that $I-J$, $A(I-J)$, $B(I-J)$, $ABC(I-J)$ are linearly independent.
For scalars $f'_0$, $f'_1$, $f'_2$, $f'_3$ in $\F$, set
\[
  Y' = f_0 (I-J) + f'_1 A (I-J) + f'_2 B (I-J) + f'_3 ABC (I-J).
\]
We assume $Y'=0$, and we show   $f'_0$, $f'_1$, $f'_2$, $f'_3$ are all $0$.
Compute the $(1,1)$-entry of $Y'$ to find $f'_0 = 0$.
Compute the $(i,j)$-entry of $Y'$ for $(i,j)=(0,1)$, $(2,1)$, $(2,3)$ to get
\begin{align*}
 0 &=  f'_1 +  \vphi'_{d}  f'_3,
\\
 0 &= \vphi_2 f'_2 + \vphi_3 \vphi''_{d-1} f'_3,
\\
 0 &= f'_1 +  \vphi'_{d-2} f'_3.
\end{align*}
Viewing the above equations as a system of linear equations with unknowns
$f'_1$, $f'_2$, $f'_3$, let $M'$ be the coefficient matrix.
Then
\[
M' =
 \begin{pmatrix}
 1 & 0 &  \vphi'_{d}
\\
 0 & \vphi_2 & \vphi_3 \vphi''_{d-1}
\\
 1 & 0 & \vphi'_{d-2}
\end{pmatrix}.
\]
The determinant of $M'$ is
\[
  \det M' = \vphi_2 (\vphi'_{d-2} - \vphi'_{d}).
\]
This is nonzero by Lemma \ref{lem:vphii-1i+1}, 
so $f_1$, $f_2$, $f_3$ are all $0$.
We have shown that  $I-J$, $A(I-J)$, $B(I-J)$, $ABC(I-J)$ are linearly independent.
By Lemma \ref{lem:mainB} $\dim {\cal X} \leq 8$.
The result follows. 
\end{proofof}

\section{Acknowledgement}

The author thanks Paul Terwilliger for many insightful comments that lead to
great improvements in the paper.

\section{Appendix 1}

Assume $d \geq 3$.
Let $A,B,C$ be a nonbipartite normalized LR triple
that is not $q$-Weyl type.
By Theorem \ref{thm:main2} the elements
\[
   I, \quad
   A,  \quad
   B, \quad
   C, \quad
   ABC, \quad
   ACB, \quad
   CAB
\]
form a basis for the tridiagonal space $\cal X$.
We represent the elements $BCA$, $BAC$, $CBA$ as a linear combination of
the basis vectors.
Below we give the coefficients of the linear combination.

For $\text{\rm NBG}_d(\F;q)$:
\[
\begin{array}{c|ccccccc}
 & I & A & B & C & ABC & ACB & CAB
\\ \hline
 BCA     
&           \rule{0mm}{6mm}
  \frac{(q^d-1)(q^{d+2}-1)}
         {q^d (q-1)^2} 
&
  - \frac {q^2-1}
            {q-1} 
&
 - \frac{ q^2-1}
           { q-1 } 
&
  \frac{ (q^2-1)^2 }
           {q (q-1)^2 }
&
  \frac{ q^2-1 }
           {q (q-1) }
&
 - (2q+1) 
&
  \frac{ q^2-1 }
           {q (q-1)}
\\
 BAC                \rule{0mm}{5mm}
&
  \frac{(q^d-1)(q^{d+2}-1)}
         {q^{d+1} (q-1)^2}
&
 0
&
 - \frac{ q^2-1}
           { q(q-1) } 
&
  \frac{ q^2-1 }
           {q^2 (q-1) }
&
  \frac{ q^2-1 }
           {q^2 (q-1) } 
&
 - \frac{q^2-1}
           {q (q-1) }
&
  \frac{1 }
           {q^2}
\\
 CBA                      \rule{0mm}{5mm}
&
  \frac{(q^d-1)(q^{d+2}-1)}
         {q^{d+1} (q-1)^2} 
&
 - \frac{ q^2-1}
           { q(q-1) }
&
 0
&
  \frac{ q^2-1 }
           {q^2 (q-1) }
&
  \frac{1 }
           {q^2 }
&
 - \frac{q^2-1}
           {q (q-1) }
&
  \frac{q^2-1 }
           {q^2(q-1)}
\end{array}
\]

For $\text{\rm NBG}_d(\F;1)$:
\[
\begin{array}{c|ccccccc}
 & I & A & B & C & ABC & ACB & CAB
\\ \hline
 BCA     \rule{0mm}{5mm}
&
 d(d+2)
&
 - 2 
&
 - 2
&
  4
&
  2 
&
 - 3
&
 2 
\\
 BAC   \rule{0mm}{5mm}
&
  d(d+2)
&
 0
&
 - 2 
&
  2 
&
  2 
&
 - 2 
&
 1 
\\
 CBA    \rule{0mm}{5mm}
&
  d(d+2) 
&
 - 2 
&
 0
&
  2 
&
  1
&
 - 2
&
  2
\end{array}
\]

For $\text{\rm NBNG}_d(\F;t)$:
\[
\begin{array}{c|ccccccc}
 & I & A & B & C & ABC & ACB & CAB
\\ \hline
 BCA                 \rule{0mm}{6mm}
&
 \frac{(t^{d/2} -1)(t^{(d+2)/2}-1) }
        {t^{d/2} } 
&
 t-1 
&
 t-1 
&
 0
&
 0
&
  t 
&
 0
\\
 BAC            \rule{0mm}{6mm}
&
 - \frac{(t^{d/2} -1)(t^{(d+2)/2}-1) }
        {t^{(d+2)/2} } 
&
 0
&
  - \frac{t-1}
            { t } 
&
   - \frac{t-1}
            { t } 
&
 0
& 
 0
&
    \frac{ 1} {t} 
\\
 CBA            \rule{0mm}{6mm}
&
  - \frac{(t^{d/2} -1)(t^{(d+2)/2}-1) }
        {t^{(d+2)/2} }
&
  - \frac{t-1}
            { t } 
&
 0
&
   - \frac{t-1}
            { t } 
&
   \frac{ 1} {t}
&
 0
&
 0
\end{array}
\]

\section{Appendix 2}

Let $A,B,C$ be a bipartite normalized LR triple
with diameter $d \geq 4$.
Let $\cal X$ be the tridiagonal space for $A,B,C$.
By Theorem \ref{thm:main1},
the vector space ${\cal X} J$ has a basis
\[
  J, \quad
  A J, \quad
  B J, \quad
  ACB J.
\]
We represent the elements
\[
  CJ, \quad
  ABC J, \quad
  BACJ, \quad
  BCAJ, \quad
  CABJ, \quad
  CBAJ
\]
as a linear combination of the basis vectors.
Below we give the coefficients of the linear combination.

For $\text{\rm B}_d(\F;t, \rho_0, \rho'_0, \rho''_0)$:
\[
\begin{array} {c | c c cc}
 & J &A J & B J & ACB J 
\\ \hline
C J &   0 &
  \rho''_0   \rule{0mm}{5mm}
&    
  \frac{ t }
          { \rho'_0 }
&   
  \frac{ \rho''_0 (t-1) }
           {\rho'_0 } 
\\
A B C J &  0 &      \rule{0mm}{5mm}
  \frac{ \rho_0 \rho''_0 (t^{d/2} -1) }
          { t-1 } 
&
 0
&      \rule{0mm}{5mm}
  - \frac{ \rho_0 \rho''_0  }
           { \rho'_0 }
\\
B A C J &   0 &   \rule{0mm}{5mm}
- \frac{ \rho''_0 }
        { \rho_0 }
&    
  \frac{ \rho''_0 ( t^{d/2+1} - 1) }
           { t-1 } 
&    
 - \frac{\rho''_0 t }
           { \rho_0 \rho'_0 }
\\
B C A J &  0 &    \rule{0mm}{5mm}
  \rho'_0 
&   
  \frac{ t }
         { \rho''_0 }
&
  t 
\\
C A B J &   0 &    \rule{0mm}{5mm}
 0
&   
  \frac{ \rho''_0 ( t^{d/2+1} - 1) }
           {  t-1 }
&    
 - \frac{\rho''_0 t }
          {\rho_0 \rho'_0 } 
\\
 C B A J &   0 &     \rule{0mm}{5mm}
  \frac{ \rho_0 \rho''_0 (t^{d/2} - 1) }
           { t-1 } 
&   
 - \frac{ \rho_0 }
           { \rho'_0 }
&     
 - \frac{ \rho_0 \rho''_0 }
           { \rho'_0 }
\end{array}
\]

For $\text{\rm B}_d(\F; 1, \rho_0, \rho'_0, \rho''_0)$:
\[
\begin{array} {c | c c cc }
 &J & A J & B J & ACB J 
\\ \hline
C J
 &    0 &
  \rho''_0   \rule{0mm}{5mm}
&    
  \frac{ 1 }
          { \rho'_0 }
&    
 0
\\
A B C J 
&     0 &    \rule{0mm}{5mm}
 - \frac{ d }
          { 2 \rho'_0 } 
&
 0
&         \rule{0mm}{5mm}
   \frac{ 1 }
           { {\rho'_0}^2  }
\\
B A C J &    0 &    \rule{0mm}{5mm}
- \frac{ \rho''_0 }
        { \rho_0  }
&    
  \frac{ \rho''_0 (d+2)}
         {2}
&   
  {\rho''_0}^2
\\
B C A J &   0 &   \rule{0mm}{5mm}
  \rho'_0 
&    
  \frac{ 1 }
         { \rho''_0 }
&
  1
\\
C A B J &  0 &     \rule{0mm}{5mm}
 0
&   
  \frac{ \rho''_0 (d + 2) }
           { 2 }
&     
  {\rho''_0}^2
\\
 C B A J &   0 &    \rule{0mm}{5mm}
 - \frac{ d }
           {2 \rho'_0 } 
&  
 - \frac{ \rho_0 }
           { \rho'_0 }
&    
  \frac{ 1 }
           { {\rho'_0}^2 }
\end{array}
\]

By Theorem \ref{thm:main1},
the vector space ${\cal X} (I-J)$ has a basis
\[
  I-J, \quad
  A (I-J), \quad
  B (I-J), \quad
  ABC (I-J).
\]
We represent the  elements
\[
  C(I-J), \;
  ACB (I-J), \;
  BAC(I-J), \;
  BCA (I-J), \;
  CAB (I-J), \;
  CBA (I-J)
\]
as a linear combination of the basis vectors.
Below we give the coefficients of the linear combination.

For $\text{\rm B}_d(\F;t, \rho_0, \rho'_0, \rho''_0)$:
\[
\begin{array}{c|cccc}
 & I-J & A (I-J) & B (I-J) & ABC (I-J)
\\ \hline
C (I-J)                   \rule{0mm}{5mm}
&  0 &
 - \frac{ \rho_0 \rho'_0 }
           { t }
&    
  \rho'_0 
&  
 - \frac{ \rho_0 (t-1) }
           { t } 
\\
A C B (I-J)            \rule{0mm}{5mm}
&   0 &
  \frac{ \rho_0 \rho''_0 (t^{d/2+1} - 1) }
          {t (t-1) } 
&
 0
&     
 - \frac{ \rho_0 \rho''_0 }
           { \rho'_0 t } 
\\
B A C (I-J)           \rule{0mm}{5mm}
&    0 &
 - \frac{ \rho_0^2 \rho'_0 (t^{d/2} - 1 ) }
           { t (t-1) } 
&    
  \frac{ \rho_0 \rho'_0 (t^{d/2}-1) }
           { t-1 }
&   
  \frac{ \rho_0^2 }
           { t }
\\
B C A (I-J)           \rule{0mm}{5mm}
&   0 &
   \frac{ \rho_0 \rho''_0 (t^{d/2} - 1) }
           { t-1 } 
&   
  \rho''_0
&  
 -  \frac{  \rho_0 \rho''_0 }
           { \rho'_0 } 
\\
C A B (I-J)            \rule{0mm}{5mm}
&  0 &
 - \frac{ \rho_0^2 \rho'_0 (t^{d/2+1} - 1) }
           { t (t-1) }
&   
  \frac{\rho_0 \rho'_0 (t^{d/2} - 1) }
          { t-1 }
&   
  \frac{ \rho_0^2 }
           { t } 
\\
C B A (I-J)           \rule{0mm}{5mm}
& 0 &
 0 
&   
 - \frac{ \rho'_0 }
          { \rho_0 }
&
  1
\end{array}
\]

For $\text{\rm B}_d(\F;1, \rho_0, \rho'_0, \rho''_0)$:
\[
\begin{array}{c|cccc}
 & I-J & A (I-J) & B (I-J) & ABC (I-J)
\\ \hline
C (I-J)                   \rule{0mm}{5mm}
&   0 &
  \frac{ 1 }
         { \rho''_0 }
&    
  \rho'_0 
&   
 0
\\
A C B (I-J)            \rule{0mm}{5mm}
&    0 &
 - \frac{ d+2 }
          { 2 \rho'_0 } 
&
 0
&   
  \frac{ 1 }
           { {\rho'_0}^2  } 
\\
B A C (I-J)           \rule{0mm}{5mm}
&      0 &
  \frac{ d \rho_0 }
           { 2 \rho''_0 } 
&   
 -  \frac{ d  }
           { 2 \rho''_0 }
&   
 \rho_0^2
\\
B C A (I-J)           \rule{0mm}{5mm}
&   0 &
 - \frac{ d }
           {2 \rho'_0 } 
&   
  \rho''_0
&   
  \frac{ 1 }
           { {\rho'_0}^2 } 
\\
C A B (I-J)            \rule{0mm}{5mm}
&   0 &
  \frac{ \rho_0 (d+2) }
           { 2 \rho''_0 }
&   
  -\frac{ d }
          { 2 \rho''_0 }
&  
 \rho_0^2
\\
C B A (I-J)           \rule{0mm}{5mm}
& 0 &
 0 
&   
 - \frac{ \rho'_0 }
          { \rho_0 }
&
  1
\end{array}
\]

{
\small

}

\bigskip\bigskip\noindent
Kazumasa Nomura\\
Tokyo Medical and Dental University\\
Kohnodai, Ichikawa, 272-0827 Japan\\
email: knomura@pop11.odn.ne.jp

\medskip\noindent
{\small
{\bf Keywords.} Lowering map, raising map, quantum group, quantum algebra, Lie algebra
\\
\noindent
{\bf 2010 Mathematics Subject Classification.} 17B37, 15A21
}

\end{document}